\DeclareMathAlphabet\mathbfcal{OMS}{cmsy}{b}{n}
\numberwithin{equation}{section}
\newtheorem{remark}{Remark}[section]
\newtheorem{lemma}{Lemma}[section]
\newtheorem{theorem}{Theorem}[section]
\newtheorem{proposition}{Proposition}[section]
\begin{document}
\title{Dispersion relation reconstruction for 2D Photonic Crystals
based on polynomial interpolation}
\date{}\author{Yueqi Wang\thanks{Department of Mathematics, The University of Hong Kong, Pokfulam Road, Hong Kong. Email: \tt{u3007895@connect.hku.hk}} ~ and ~ Guanglian Li\thanks{Department of Mathematics, The University of Hong Kong, Pokfulam Road, Hong Kong. Email: \tt{lotusli@maths.hku.hk}}
}
\maketitle
\begin{abstract}
Dispersion relation reflects the dependence of wave frequency on its wave vector when the wave passes through certain materials. It demonstrates the properties of this material and thus it is critical. However, dispersion relation reconstruction is very time-consuming and expensive. To address this bottleneck, we propose in this paper an efficient dispersion relation reconstruction scheme based on global polynomial interpolation for the approximation of two-dimensional photonic band functions. Our method relies on the fact that the band functions are piecewise analytic with respect to the wave vector in the Brillouin zone. We utilize suitable sampling points in the Brillouin zone at which we solve the eigenvalue problem involved in the band function calculation, and then employ Lagrange interpolation to approximate the band functions on the whole Brillouin zone. Numerical results show that our proposed method can significantly improve the computational efficiency. 
\end{abstract}
\textbf{Key words:} Photonic Crystals, band function, Lagrange Interpolation, sampling methods
\section{Introduction}
Photonic Crystals (PhCs) are periodic dielectric materials with size of their period comparable to the wavelength \cite{joannopoulos2008molding}. The propagation of electromagnetic waves inside such materials depends heavily on their frequencies. Furthermore, electromagnetic waves within a certain frequency range cannot propagate in certain PhCs. This forbidden frequency range is the so-called {\it band gap}, which motivates many important applications, including optical transistors, photonic fibers and low-loss optical mirrors \cite{yanik2003all,russell2003photonic,labilloy1997demonstration,wang2023analytical}. In this paper, we focus on two-dimensional (2D) PhCs which are periodic in the $xy$ plane and homogeneous along the $z$ axis with dielectric columns or holes spaced in dielectric materials.

To fully understand PhCs, research interest falls on the propagating frequency as well as the band gap. 
The periodicity of PhCs allows using Bloch's theorem so that the original Helmholtz eigenvalue problem on the whole space is transformed into a family of Helmholtz eigenvalue problems defined on the unit cell $\Omega$ parameterized by the wave vector $\mathbf{k}$ varying in the irreducible Brillouin zone (IBZ) $\mathcal{B}_{\text{red}}$ \cite{kuchment1993floquet}. The frequency $\omega_n$ which is a scaling of the square root of $n$th largest eigenvalue, regarded as a function of the wave vector $\mathbf{k}$ is the so-called $n$th band function for all $n\in\mathbb{N}^+$. The band gap is the distance between two adjacent band functions. Consequently, the calculation of the $n$th band function $\omega_n(\mathbf{k})$ involves solving infinite number of the Helmholtz eigenvalue problems defined on the unit cell parameterized by the wave vector $\mathbf{k}\in \mathcal{B}_{\text{red}}$. Since the permittivity takes different values in the inclusion and the background of one cell, and the ratio between these values, the so-called {\it contrast}, should be large to generate the band gap. This leads to Helmholtz eigenvalue problems with high-contrast and piecewise constant coefficients, which is numerically challenging. To reduce this computational cost, a natural approach is to decrease the number of parameters $\mathbf{k}$ by limiting them to $\partial\mathcal{B}_{\text{red}}$. A practical approach is to discretize $\partial\mathcal{B}_{\text{red}}$ uniformly to generate the parameters. Although there is no rigorous theoretical foundation, this approach demonstrates its accuracy for many numerical tests on 2D PhCs. To further reduce the number of parameters, several sampling algorithms have been proposed. In specific, Hussein introduced the model order reduction method to band gap calculation and proposed to use the high symmetry points and the intermediate points centrally intersecting the straight lines joining these high symmetry points as the sampling points \cite{hussein2009reduced}. Klindworth proposed to use Taylor expansions to approximate the reordered band functions based on the fact that band functions can be reordered so that they are analytic functions of $\mathbf{k}$ and an adaptive step size controlling was proposed to determine the sampling points \cite{klindworth2014efficient}. In addition, some improvements to these methods have also been proposed in recent years \cite{scheiber2011model,jorkowski2017higher,jorkowski2018mode}.

\subsection{Motivation for sampling inside $\mathcal{B}_{\text{red}}$}
However, there is no guarantee that band gaps can be calculated accurately using only the information of the wave vector over $\partial\mathcal{B}_{\text{red}}$ \cite{hinuma2017band}. As an example, we depict in Figures \ref{material} two 2D PhCs that exhibit a periodic arrangement of dielectric columns within a dielectric material. Their unit cells, Brillouin zones and IBZs are illustrated in Figures \ref{lattice} and \ref{lattice3}, respectively. Recall that the unit cell of a photonic crystal is denoted as $\Omega$.
\begin{figure}[H]
\centering
\subfigure[PhCs with square lattice]{\label{square material}
\includegraphics[width=4cm]{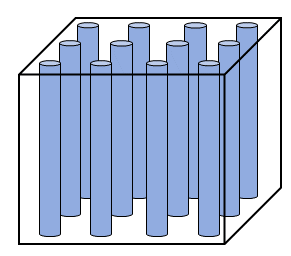}
}%
\subfigure[PhCs with hexagonal lattice]{\label{hex material}
\includegraphics[width=5cm]{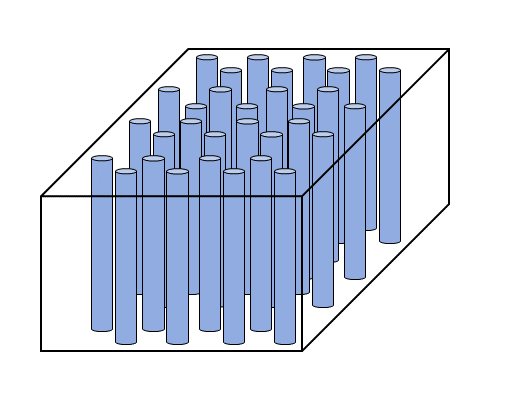}
}%
\centering
\caption{Two 2D PhCs. The materials have dielectric columns homogeneous along the
$z$ direction and periodic along $x$ and $y$.}\label{material}
\end{figure}
Figure \ref{minmax} illustrates the extrema of the first six band functions, which obviously do not always occur over $\partial\mathcal{B}_{\text{red}}$. For example, Figure \ref{minmax4} depicts one extremum inside $\mathcal{B}_{\text{red}}$ which corresponds to the maximum value of the sixth band function. In Table \ref{tab1} we present this maximum value and compare it with the maximum value of this case obtained using only $\partial\mathcal{B}_{\text{red}}$, which demonstrates the importance of the interior information. Furthermore, some work has shown counterexamples that highlight the dangers of just using $\partial\mathcal{B}_{\text{red}}$ \cite{harrison2007occurrence,maurin2018probability,craster2012dangers}. Thus, it is critical to develop an accurate and efficient sampling algorithm in  $\mathcal{B}_{\text{red}}$.

\begin{figure}[H]
\centering
\subfigure{\label{lattice(1)}
\includegraphics[width = .3\textwidth,trim={14cm 4.5cm 12cm 4.5cm},clip]{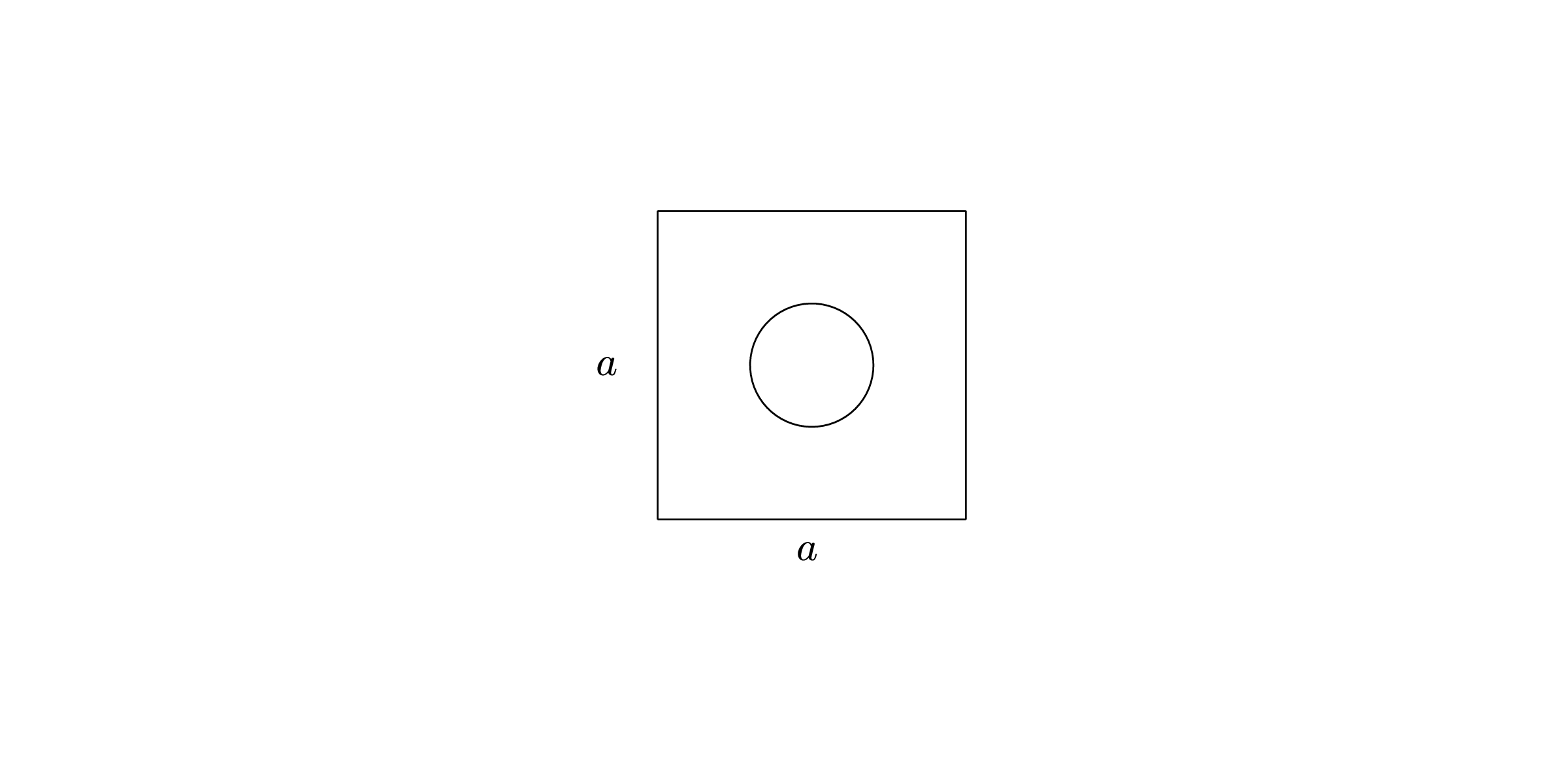}
}%
\subfigure{\label{lattice(2)}
\includegraphics[width = .3\textwidth,trim={14cm 4.5cm 12cm 4.5cm},clip]{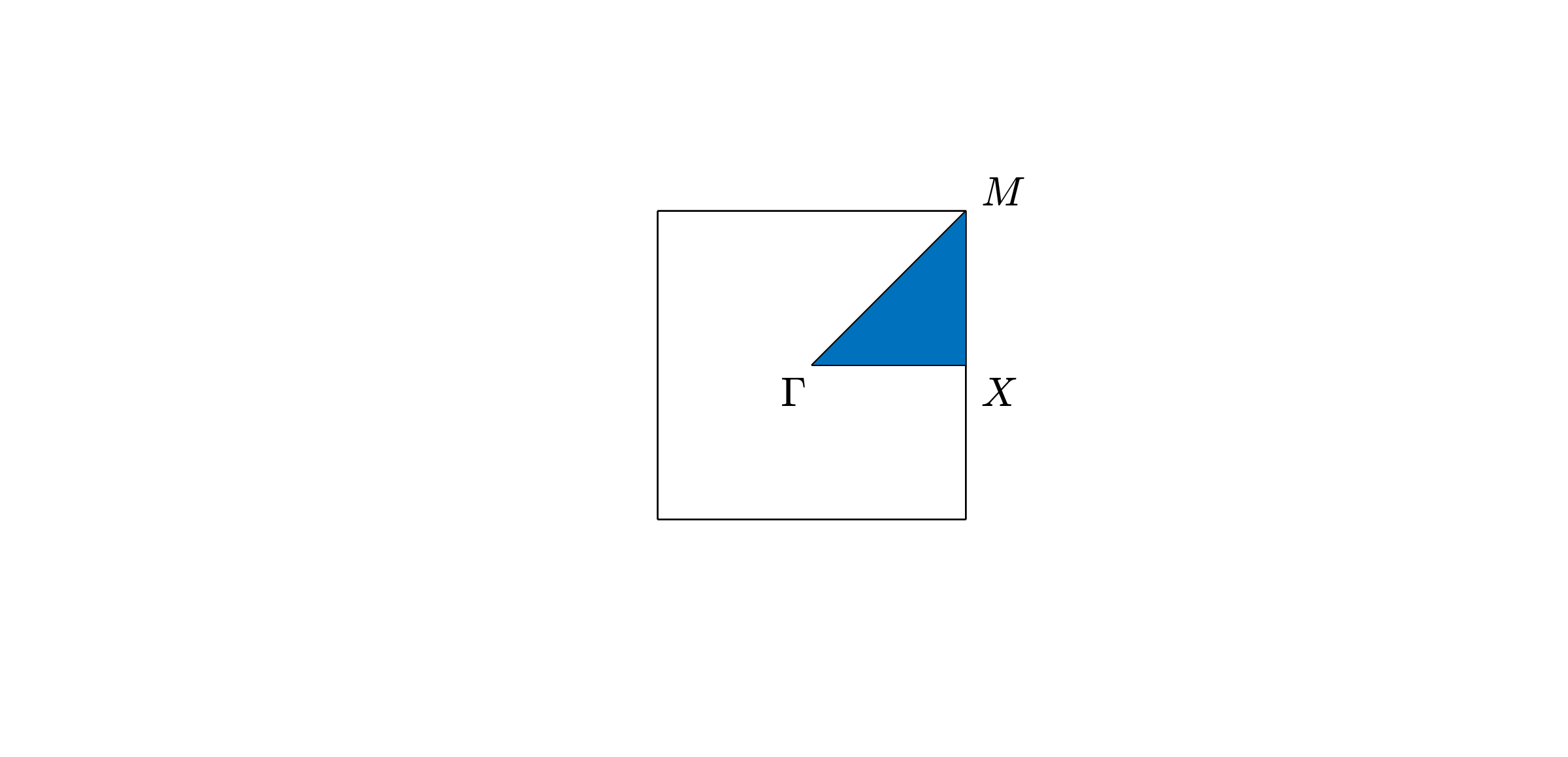}
}%
\centering
\caption{Square lattice: $\Omega$} (left) and the corresponding Brillouin zone (right). The IBZ $\mathcal{B}_{\text{red}}$ is the shaded triangle with vertices $\Gamma=(0,0)$, $X=\frac{1}{a}(\pi,0)$ and $M=\frac{1}{a}(\pi,\pi)$.\label{lattice}
\end{figure}

\begin{figure}[H]
\centering
\subfigure{\label{lattice2(1)}
\includegraphics[width = .27\textwidth,trim={14cm 4.5cm 13cm 4.5cm},clip]{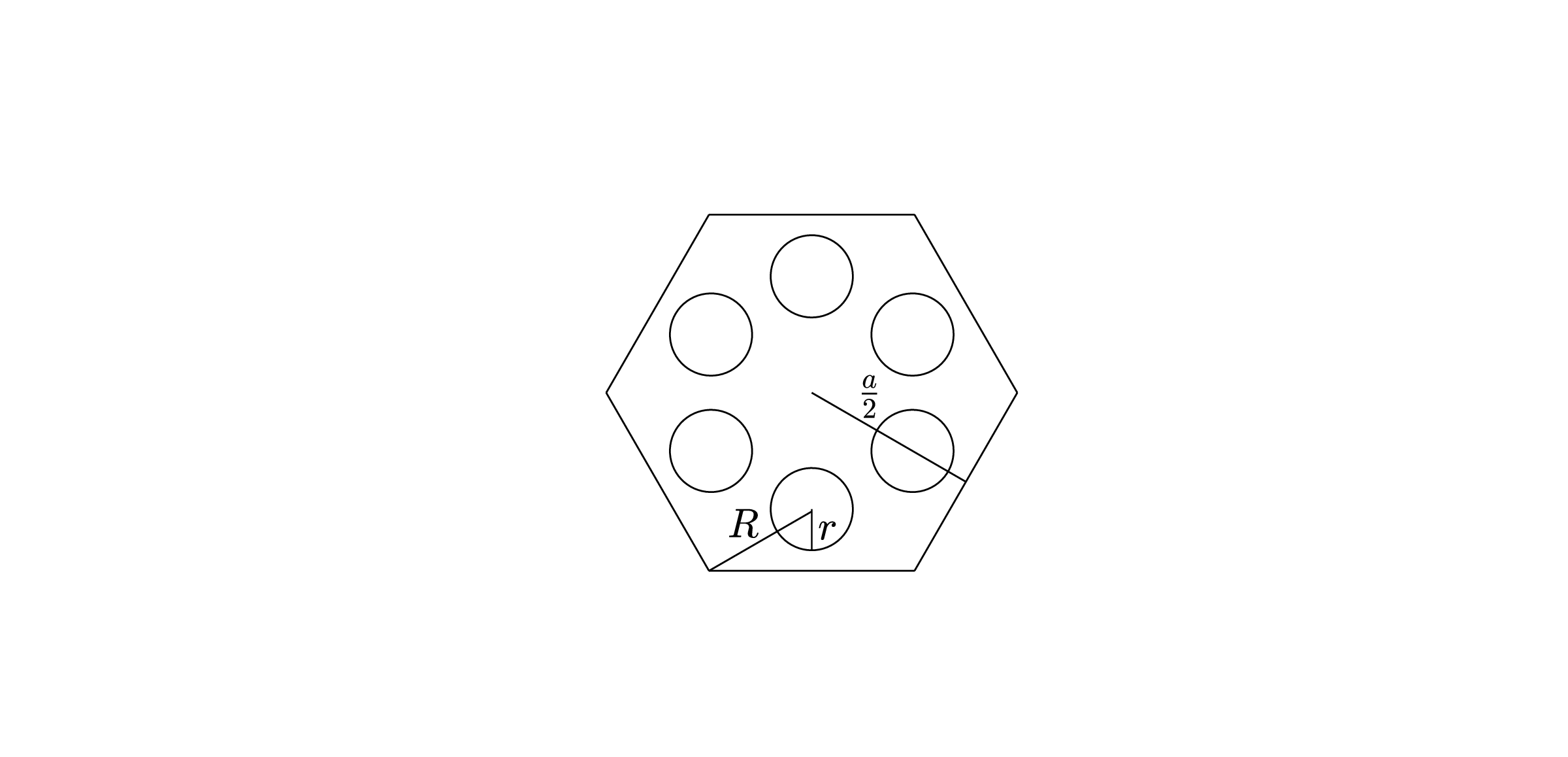}
}%
\subfigure{\label{lattice2(2)}
\includegraphics[width = .27\textwidth,trim={14cm 4.5cm 12cm 4.5cm},clip]{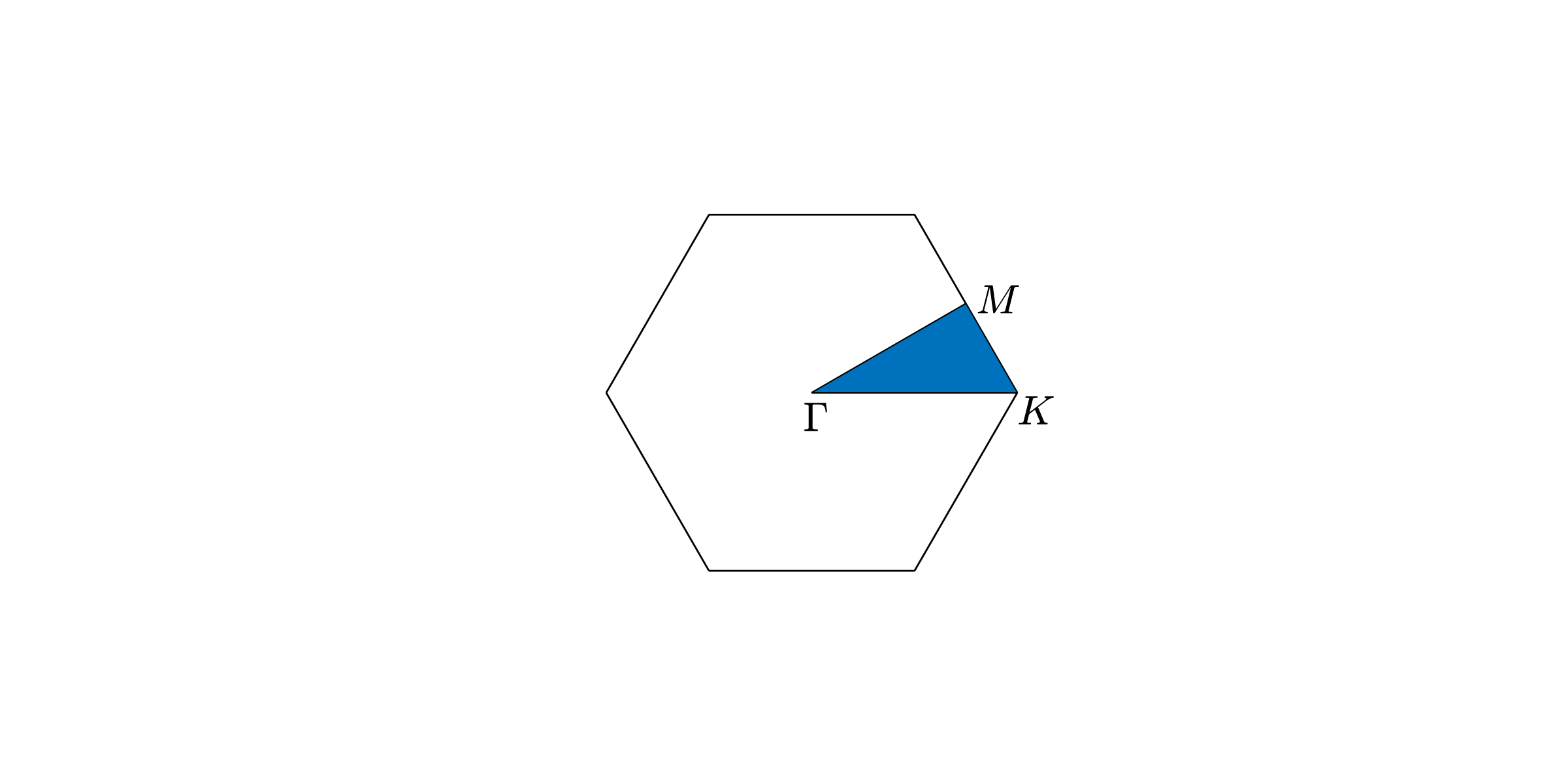}
}%
\centering
\caption{Hexagonal lattice: $\Omega$} (left) and the corresponding Brillouin zone (right). The IBZ $\mathcal{B}_{\text{red}}$ is the shaded triangle with vertices $\Gamma=(0,0)$, $K=\frac{1}{a}(\frac{4}{3}\pi,0)$ and $M=\frac{1}{a}(\pi,\frac{\sqrt{3}}{3}\pi)$.\label{lattice3}
\end{figure}
Nevertheless, to the best of our knowledge, there have been no efficient numerical algorithms for band function reconstruction using the interior of $\mathcal{B}_{\text{red}}$. To fill this vacancy, we propose in this paper the use of several efficient sampling algorithms that can be combined with Lagrange interpolants to approximate band functions in $\mathcal{B}_{\text{red}}$. This work is built upon several well-developed sampling algorithms, which are widely used in numerical analysis \cite{blyth2006lobatto,taylor2000algorithm,bos2010computing,salzer1972lagrangian,chen1995approximate}. 
\begin{table}[H]
\begin{center}
\begin{tabular}{ccc|cc}
\toprule
&\multicolumn{2}{c|}{$\mathcal{B}_{\operatorname{red}}$}&\multicolumn{2}{c}{$\partial\mathcal{B}_{\operatorname{red}}$}\\
\midrule
Band number&$\mathbf{k}$&Extrema &$\mathbf{k}$&Extrema\\
\midrule
6 & $(0.719762,0.049867)$& $\text{Maxi}=1.071796$& $(0.690971,0)
$& $\text{Maxi}=1.069602$
\\
\bottomrule
\end{tabular}
\end{center}
\caption{Hexagonal lattice (TE mode): band function's extrema obtained using $\mathcal{B}_{\operatorname{red}}$ and $\partial\mathcal{B}_{\operatorname{red}}$.}
\label{tab1}
\end{table}
\subsection{Main contributions}
 Our main contributions are threefold. On the one hand, we analyze and summarize key regularity properties of band functions, cf. Theorems \ref{thm:piecewise-analytic} and \ref{thm:lipchitz}: First, band functions are piecewise analytic; Second, singularities occur only on branch points, i.e., crossings of adjacent band functions, and at the origin. These two results are proved by showing that $\{\mathbf{k},(\frac{\omega_n(\mathbf{k})}{c})^2\}_{n=1}^\infty$, which are referred to as the Bloch variety \eqref{bloch variety}, are zeros of a real analytic function in $\mathbb{R}^3$. The locations of singular points are confirmed via implicit mapping theorem; Third, we give the formula for calculating the first-order partial derivatives of band functions at non-singularities, and thus point out that the first-order partial derivatives of band functions may be discontinuous at branch points where the dimension of the corresponding eigenspace is greater than one. This formula further reveals that band functions are Lipschitz continuous.
\begin{figure}[H]\label{points exist the maximum or minimum eigenvalue}
\centering
\subfigure[Square lattice: TE]{\label{minmax2}
\includegraphics[width=.2\textwidth,trim={1cm 0.2cm 0.9cm 0.4cm},clip]{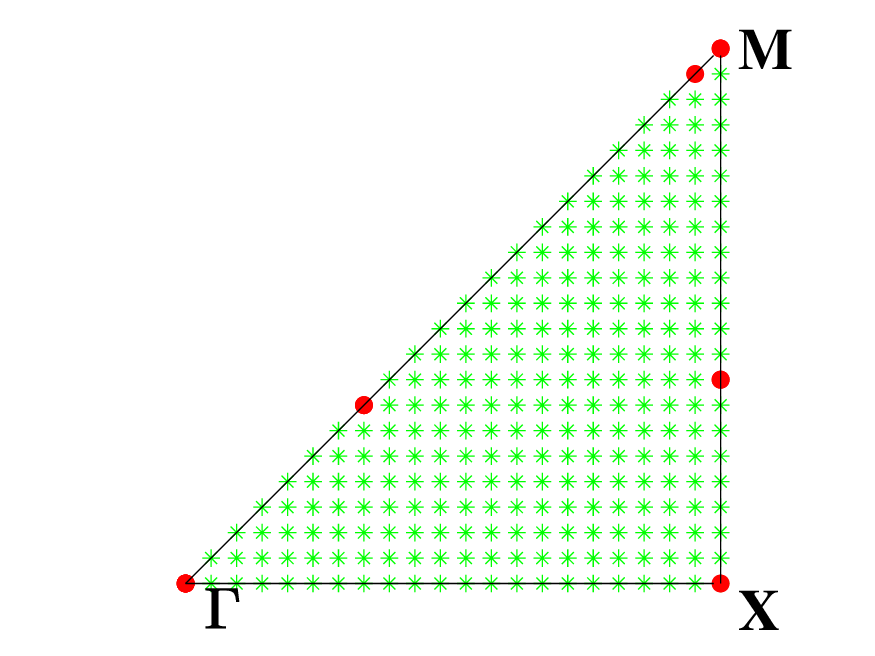}
}%
\subfigure[Square lattice: TM]{\label{minmax1}
\includegraphics[width=.2\textwidth,trim={0.9cm 0.2cm 1cm 0.4cm},clip]{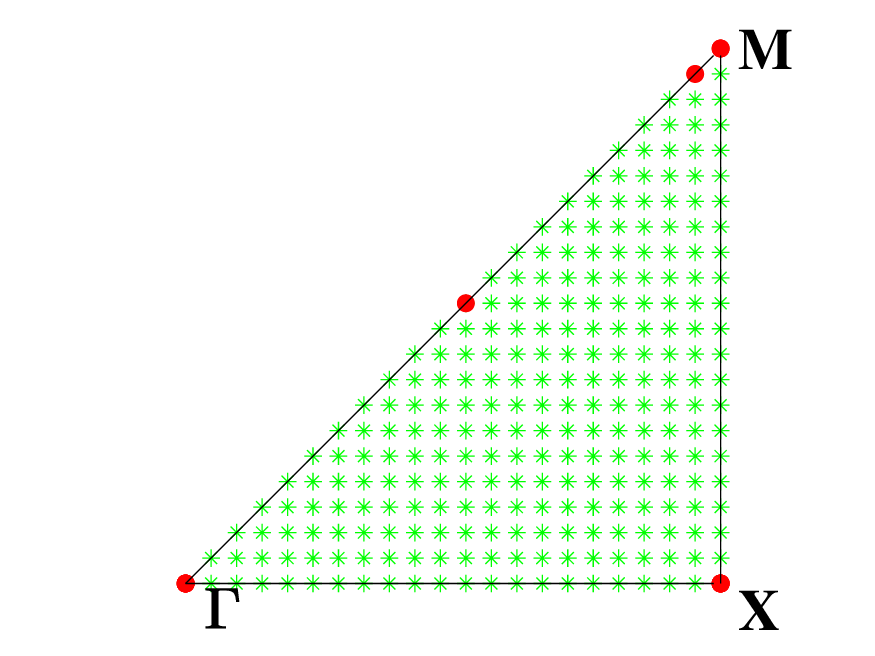}
}%
\subfigure[Hexagonal lattice: TE]{\label{minmax4}
\includegraphics[width=.27\textwidth,trim={1cm 0cm 1cm 1cm},clip]{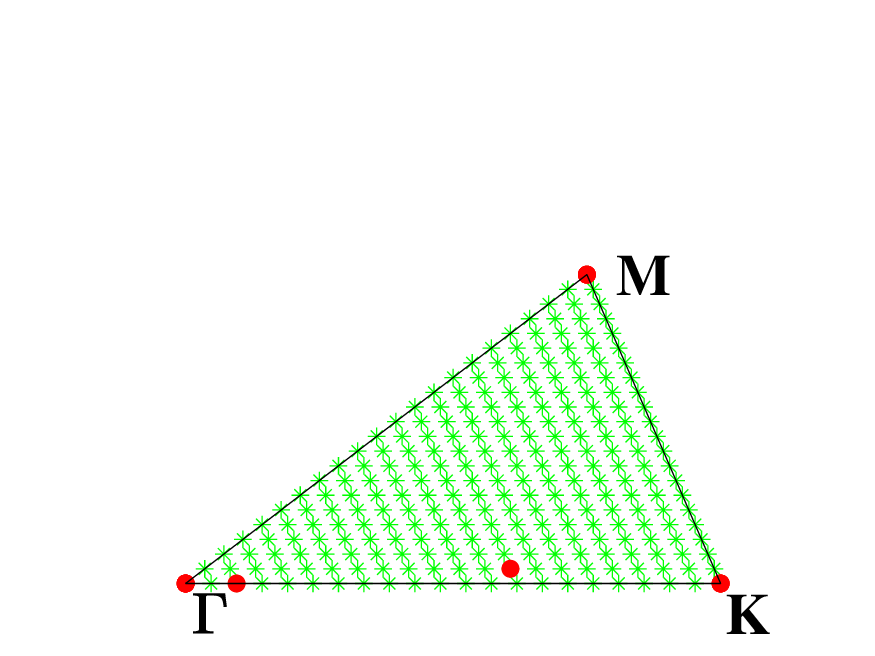}
}%
\subfigure[Hexagonal lattice: TM]{\label{minmax3}
\includegraphics[width=.27\textwidth,trim={1cm 0cm 1cm 1cm},clip]{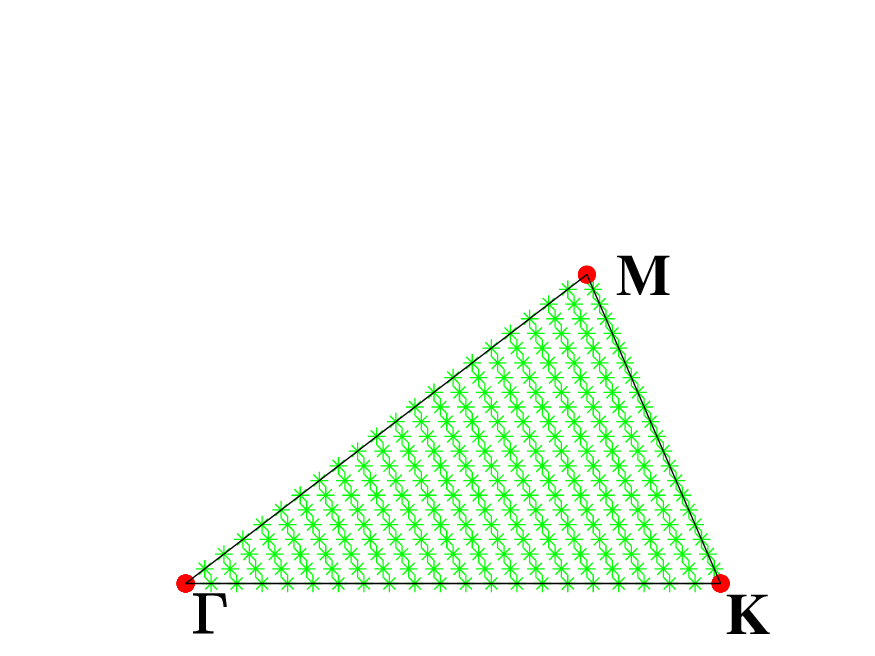}
}%
\centering
\caption{Extrema exist at points marked with red circles, and green points are used to measure the extrema.}\label{minmax}
\end{figure}
On the other hand, our proposed method can be utilized without reordering band functions which is crucial for the Taylor expansion-based method as developed in \cite{klindworth2014efficient}.
Without {reordering}, we are allowed to approximate the first few band functions only, which are the interest of many practical applications, while the aforementioned approach can only approximate the whole band functions simultaneously. Moreover, our method approximates band functions in the whole IBZ, and we can further incorporate it into many other numerical methods, e.g., adaptive FEM \cite{giani2012adaptive} and hp FEM \cite{schmidt2009computation}, to calculate band functions with high accuracy and efficiency.

This paper is organized as follows. In Section \ref{sec:preli}, we describe the time harmonic Maxwell equations involved in band structure calculation and elaborate on the derivation of the parameterized Helmholtz eigenvalue problem through Bloch's theorem as well as the two modes in 2D PhCs.
Section \ref{sec:property} is concerned with the smoothness of the band functions, based upon which we introduce the numerical schemes to reconstruct these band functions in Section \ref{sec:numerical-scheme}.  Extensive numerical experiments are illustrated in Section \ref{sec:experiment} to support our theoretical findings. Finally, we present in Section \ref{sec:conclusion} conclusions and future work. 

\section{Preliminaries}\label{sec:preli}
To study the propagation of light in photonic crystals, we begin with the macroscopic Maxwell equations. In SI convention, the Maxwell equations are composed of the following four equations \cite{jackson1999classical},
\begin{equation*}
\begin{split}
    \nabla \times \mathbfcal{E}+\frac{\partial{\mathbfcal{B}}}{\partial{t}}&=0,\\
    \nabla \times \mathbfcal{H}-\frac{\partial{\mathbfcal{D}}}{\partial{t}}&=\mathbfcal{J},\\
    \nabla \cdot \mathbfcal{D}&=\rho,\\
    \nabla \cdot\mathbfcal{B}&=0,
\end{split}
\end{equation*}
where $\mathbfcal{E}$, $\mathbfcal{H}$, $\mathbfcal{D}$ and $\mathbfcal{B}$ denote the electric field, magnetic field, electric displacement field and the magnetic induction field, respectively. Moreover, $\mathbfcal{J}$ is the free current density and $\rho$ is the free charge density. Assume that there are no sources of light, then we can set $\mathbfcal{J}=\mathbf{0}$ and $\rho=0$.

Conventionally, it is assumed that photonic materials are linear, non-dispersive, and nonmagnetic, which are isotropic with respect to light propagation. Therefore, the electric displacement field $\mathbfcal{D}$, the electric field $\mathbfcal{E}$, the magnetic induction field $\mathbfcal{B}$, and the magnetic field $\mathbfcal{H}$ obey the following constitutive relations,
\begin{equation*}
\begin{split}  \mathbfcal{D}&=\epsilon_{0}\epsilon\mathbfcal{E},\\
    \mathbfcal{B}&=\mu_{0}\mathbfcal{H}, 
\end{split}
\end{equation*}
where $\epsilon_0$ denotes the vacuum permittivity, $\epsilon\in L^{\infty}(\mathbb{R}^3;\mathbb{R}^+)$ is the relative permittivity, and $\mu_0$ is the vacuum permeability.  

After introducing all of the above assumptions, the Maxwell equations can be formulated as
\begin{subequations}
\begin{align}
\nabla\times\mathbfcal{E}+\mu_{0}\frac{\partial\mathbfcal{H}}{\partial t}=0, \label{11} \\
\nabla\times\mathbfcal{H}-\epsilon_{0}\epsilon\frac{\partial\mathbfcal{E}}{\partial t}=0,\label{22}\\
\nabla\cdot(\epsilon\mathbfcal{E})=0,\label{3}\\
\nabla\cdot\mathbfcal{H}=0.\label{4}
\end{align}
\end{subequations}
Here both $\mathbfcal{E}$ and $\mathbfcal{H}$ are functions of time and space, i.e., $\mathbfcal{E}=\mathbfcal{E}(\mathbf{x},t)$, $\mathbfcal{H}=\mathbfcal{H}(\mathbf{x},t)$  with $\mathbf{x}\in\mathbb{R}^3$ and $t\geq 0$. Since the coefficients in \eqref{11}-\eqref{4} are time-independent, we can seek for time harmonic solutions of the following form,
\begin{align*}
\mathbfcal{E}(\mathbf{x},t)&=\mathbf{E}(\mathbf{x})e^{-i\omega t}\\
\mathbfcal{H}(\mathbf{x},t)&=\mathbf{H}(\mathbf{x})e^{-i\omega t}.
\end{align*}

Together with \eqref{11}-\eqref{4}, we derive the time harmonic Maxwell equations,
\begin{subequations}
\begin{align}
    \nabla\times\mathbf{E}(\mathbf{x})-i\omega\mu_{0}\mathbf{H}(\mathbf{x})=0,\label{harm1}    \\
    \nabla\times\mathbf{H}(\mathbf{x})+i\omega\epsilon_{0}\epsilon(\mathbf{x})\mathbf{E}(\mathbf{x})=0 , \label{harm2}\\  \nabla\cdot\left(\epsilon(\mathbf{x})\mathbf{E}(\mathbf{x})\right)=0,\label{harm3}\\
    \nabla\cdot\mathbf{H}(\mathbf{x})=0,\label{harm4}
\end{align}
\end{subequations}
where $\mathbf{x}\in\mathbb{R}^3$ and $\omega\geq 0$.
\begin{remark}
Note that $\mathbf{E}$ and $\mathbf{H}$ are complex-valued fields, we only take their real part to obtain the physical fields.
\end{remark}

Now applying the curl operator to \eqref{harm1} and using \eqref{harm2}, we obtain
\begin{equation}\label{E}  \nabla\times\left(\nabla\times\mathbf{E}(\mathbf{x})\right)-\left(\frac{\omega}{c}\right)^2\epsilon(\mathbf{x})\mathbf{E}(\mathbf{x})=0,\quad \mathbf{x}
\in\mathbb{R}^3, 
\end{equation}
where $\epsilon_{0}\mu_{0}=c^{-2}$ with $c$ denoting the vacuum speed of light. Similarly, applying the curl operator to \eqref{harm2} and using \eqref{harm1}, we obtain
\begin{equation}\label{H}
    \nabla\times\left(\left(\frac{1}{\epsilon(\mathbf{x})}\right)\nabla\times\mathbf{H}(\mathbf{x})\right)-\left(\frac{\omega}{c}\right)^2\mathbf{H}(\mathbf{x})=0,\quad \mathbf{x}
\in\mathbb{R}^3. 
\end{equation}

For a given frequency $\omega\geq 0$, we can obtain the spatial fields $\mathbf{E}(\mathbf{x})$ and $\mathbf{H}(\mathbf{x})$ through \eqref{E} and \eqref{H}. In fact, we only need to consider one of the above equations, since we can derive from \eqref{harm1} and \eqref{harm2} that 
\begin{subequations}
\begin{align}
    \mathbf{H}(\mathbf{x})&=-\frac{i}{\omega\mu_{0}}\nabla\times\mathbf{E}(\mathbf{x}),  \label{rel1}\\
    \mathbf{E}(\mathbf{x})&=\frac{i}{\omega\epsilon_{0}\epsilon(\mathbf{x})}\nabla\times\mathbf{H}(\mathbf{x}). \label{rel2}
\end{align}
\end{subequations}
\begin{remark}
The two divergence equations \eqref{harm3} and \eqref{harm4} are implicitly satisfied, which can easily be seen by applying the divergence operator to equations \eqref{harm1} and \eqref{harm2}, and considering the fact that $\omega>0$. Hence, now we only focus on the other two of the time harmonic Maxwell equations as long as we drop those ``spurious modes" existing at $\omega=0$.\label{re1.2}
\end{remark}

To summarize, the eigenvalue problems \eqref{E} and \eqref{H} are two crucial parts of studying the propagation of electromagnetic waves in PhCs and our aim is to find the eigenpairs $(\omega,\mathbf{E})$ and $(\omega,\mathbf{H})$ satisfying these two equations, respectively. 

\subsection{Modes in 2D Photonic Crystals}
In this work we focus on two 2D PhCs as depicted in Figure \ref{material}. They have finite extensions in practice. Nevertheless, it is commonly assumed that the material extends infinitely in the plane perpendicular to the columns. Hence, the permittivity $\epsilon$ satisfies $\epsilon(\mathbf{x})=\epsilon(x,y,0)$, for all $\mathbf{x}=(x,y,z)\in \mathbb{R}^{3}$. So we can also restrict our electric field and magnetic field to the $xy$ plane, i.e.,
\begin{align*}
    \mathbf{E}(\mathbf{x})&=\mathbf{E}(x,y,0)=(E_{1}(x,y,0),E_{2}(x,y,0),E_{3}(x,y,0)),
    \\\mathbf{H}(\mathbf{x})&=\mathbf{H}(x,y,0)=(H_{1}(x,y,0),H_{2}(x,y,0),H_{3}(x,y,0)),
\end{align*}
for all $\mathbf{x}=(x,y,z)\in \mathbb{R}^{3}$. Then a straightforward calculation leads to
\begin{align}
\nabla\times \mathbf{E}&=\left(\frac{\partial E_{3}}{\partial y}\right)\mathbf{i}+\left(-\frac{\partial E_{3}}{\partial x}\right)\mathbf{j}+\left(\frac{\partial E_{2}}{\partial x}-\frac{\partial E_{1}}{\partial y}\right)\mathbf{k},\label{tuidao1}\\
\nabla\times(\nabla\times \mathbf{E})
&=\left(\frac{\partial ^{2} E_{2}}{\partial y \partial x}-\frac{\partial ^{2} E_{1}}{\partial y^2}\right)\mathbf{i}+\left(-\frac{\partial ^{2} E_{2}}{\partial x ^2}+\frac{\partial ^{2} E_{1}}{\partial x \partial y}\right)\mathbf{j}+\left(-\frac{\partial^2 E_{3}}{\partial x^2}-\frac{\partial ^2 E_{3}}{\partial y^2}\right)\mathbf{k}, \label{tuidao2}\\
\nabla\times \mathbf{H}&=\left(\frac{\partial H_{3}}{\partial y}\right)\mathbf{i}+\left(-\frac{\partial H_{3}}{\partial x}\right)\mathbf{j}+\left(\frac{\partial H_{2}}{\partial x}-\frac{\partial H_{1}}{\partial y}\right)\mathbf{k},\label{tuidao3}\\
\nabla\times\left(\frac{1}{\epsilon(\mathbf{x})} \nabla\times \mathbf{H}\right)
&=\left(\frac{\partial} {\partial y }\frac{1}{\epsilon(\mathbf{x})}\frac{\partial H_{2}}{\partial x}-\frac{\partial} {\partial y}\frac{1}{\epsilon(\mathbf{x})}\frac{\partial H_{1}}{\partial y}\right)\mathbf{i} 
+\left(-\frac{\partial} {\partial x }\frac{1}{\epsilon(\mathbf{x})}\frac{\partial H_{2}}{\partial x}+\frac{\partial} {\partial x}\frac{1}{\epsilon(\mathbf{x})}\frac{\partial H_{1}}{\partial y}\right)\mathbf{j} + \nonumber\\
&\left(-\frac{\partial} {\partial x }\frac{1}{\epsilon(\mathbf{x})}\frac{\partial H_{3}}{\partial x}-\frac{\partial} {\partial y}\frac{1}{\epsilon(\mathbf{x})}\frac{\partial H_{3}}{\partial y}\right)\mathbf{k}.\label{tuidao4}
\end{align}
Plugging \eqref{tuidao2} into \eqref{E}, we obtain
\begin{align*}
-\Delta E_3(\mathbf{x})-\left(\frac{\omega}{c}\right)^2 \epsilon(\mathbf{x})E_3(\mathbf{x})=0,\quad \mathbf{x}
\in\mathbb{R}^2.
\end{align*}
Here and in the following, we consider $\mathbf{x}\in \mathbb{R}^2$. Combining with \eqref{tuidao1}, \eqref{rel1} implies 
\begin{equation*}
\begin{aligned}
    H_1(\mathbf{x})&=-\frac{i}{\omega \mu _0}\frac{\partial}{\partial y}E_3(\mathbf{x}),\\
    H_2(\mathbf{x})&=\frac{i}{\omega \mu _0}\frac{\partial}{\partial x}E_3(\mathbf{x}).
\end{aligned}    
\end{equation*}
Analogously, we derive
\begin{align*}
    -\nabla \cdot \frac{1}{\epsilon(\mathbf{x})}\nabla H_3(\mathbf{x})-\left(\frac{\omega}{c}\right)^2 H_3(\mathbf{x})=0,\quad \mathbf{x}
\in\mathbb{R}^2.
\end{align*}
Following \eqref{tuidao3} and \eqref{rel2}, we get
\begin{equation*}
\begin{aligned}
    E_1(\mathbf{x})&=\frac{i}{\omega \epsilon _0 \epsilon(\mathbf{x})}\frac{\partial}{\partial y}H_3(\mathbf{x}),\\
    E_2(\mathbf{x})&=-\frac{i}{\omega \epsilon _0 \epsilon(\mathbf{x})}\frac{\partial}{\partial x}H_3(\mathbf{x}),
    \end{aligned}
\end{equation*}
The above derivation yields two scalar eigenvalue problems and we also deduce that the components of the electric field and magnetic field are not independent. Indeed, $H_1,H_2$ are related to $E_3$, and $E_1,E_2$ are related to $H_3$. Thus, we can classify the electromagnetic waves in terms of whether $E_3$ or $H_3$ equals to zero. which is often referred to as Transverse Electric (TE) mode and Transverse Magnetic (TM) mode respectively. In other words, in TE mode, the magnetic field is directed along the $z$ axis and the electric field is perpendicular to this axis, while TM mode consists of electric field along $z$ axis and magnetic field perpendicular to the $z$ axis.

To conclude, the eigenvalue problems in 2D PhCs reduce to
\begin{align}
    -\nabla \cdot \frac{1}{\epsilon(\mathbf{x})}\nabla H(\mathbf{x})-\left(\frac{\omega}{c}\right)^2 H(\mathbf{x})&=0,\quad \mathbf{x}
\in\mathbb{R}^2&&\textbf{(TE mode)},\label{TE}\\
    -\Delta E(\mathbf{x})-\left(\frac{\omega}{c}\right)^2 \epsilon(\mathbf{x})E(\mathbf{x})&=0,\quad \mathbf{x}
\in\mathbb{R}^2&& \textbf{(TM mode)}.\label{TM}
\end{align}

\subsection{Bloch's theorem}

2D PhCs possess a discrete translational symmetry \cite{joannopoulos2008molding}, i.e., there exist two linearly independent vectors such that the permittivity $\epsilon(\mathbf{x})$ satisfies
\begin{equation*}
    \epsilon(\mathbf{x}+c_1\mathbf{a}_1+c_2\mathbf{a}_2)=\epsilon(\mathbf{x})
    \text{ for all }\mathbf{x}\in\mathbb{R}^2\text{ and }c_1,c_2\in\mathbb{R}.
\end{equation*}
The primitive lattice vectors, denoted as $\mathbf{a}_1,\mathbf{a}_2$, are the shortest possible vectors that satisfy this condition. The square lattice 
in Figure \ref{lattice} has primitive lattice vectors $\mathbf{a}_i = a{e}_i$ for $i=1,2$, and the hexagonal lattice in Figure \ref{lattice3} has primitive lattice vectors $\mathbf{a}_1 = \frac{a}{4}({e}_1+\sqrt{3}{e}_2)$ and $\mathbf{a}_2 = \frac{a}{4}({e}_1-\sqrt{3}{e}_2)$. Here, $(e_i)_{i=1,2}$ is the canonical basis in $\mathbb{R}^2$. The reciprocal lattice vectors $\mathbf{b}_1$ and $\mathbf{b}_2$ are defined by the property $\mathbf{b}_i \cdot \mathbf{a}_j=2\pi\delta_{ij}$ for $i,j=1,2$.

Bloch's theorem \cite{kittel2018introduction} states that in periodic crystals, wave functions take the form of a plane wave modulated by a periodic function. Mathematically, they can be written as
 \begin{align*}  \Psi(\mathbf{x})=e^{i\mathbf{k}\cdot\mathbf{x}}u(\mathbf{x}),
 \end{align*}
where $\Psi$ is the wave function, $u(\mathbf{x})$ is a periodic function with the same periodicity as the crystal lattice and $\mathbf{k}$ is the wave vector. Functions of this form are known as Bloch functions or Bloch states. 
\begin{proposition}
The periodic condition of $u(\mathbf{x})$ implies that each Bloch state can be determined by its values within the unit cell $\Omega$, which is spanned by the primitive lattice vectors, as illustrated in the left of Figures \ref{lattice} and \ref{lattice3}. 
\end{proposition}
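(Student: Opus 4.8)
The plan is to reduce the claim to the elementary observation that the unit cell $\Omega$ tiles $\mathbb{R}^2$ under lattice translations, combined with the defining quasi-periodicity of a Bloch state. First I would fix the primitive lattice vectors $\mathbf{a}_1,\mathbf{a}_2$ and let $\Lambda=\{n_1\mathbf{a}_1+n_2\mathbf{a}_2:n_1,n_2\in\mathbb{Z}\}$ denote the lattice they generate. The key structural fact I would invoke is that $\Omega$ is a fundamental domain for $\Lambda$: every point $\mathbf{x}\in\mathbb{R}^2$ admits a decomposition $\mathbf{x}=\mathbf{x}_0+\mathbf{R}$ with $\mathbf{x}_0\in\Omega$ and $\mathbf{R}\in\Lambda$, this decomposition being unique whenever $\mathbf{x}_0$ lies in the interior of $\Omega$.

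Next I would exploit the periodicity of $u$. By hypothesis $u(\mathbf{x}+\mathbf{R})=u(\mathbf{x})$ for every $\mathbf{R}\in\Lambda$. Writing an arbitrary point as $\mathbf{x}=\mathbf{x}_0+\mathbf{R}$ and substituting into the Bloch ansatz gives
\[
\Psi(\mathbf{x})=e^{i\mathbf{k}\cdot(\mathbf{x}_0+\mathbf{R})}u(\mathbf{x}_0+\mathbf{R})=e^{i\mathbf{k}\cdot\mathbf{R}}\,e^{i\mathbf{k}\cdot\mathbf{x}_0}u(\mathbf{x}_0)=e^{i\mathbf{k}\cdot\mathbf{R}}\,\Psi(\mathbf{x}_0),
\]
which is equivalent to the quasi-periodicity relation $\Psi(\mathbf{x}+\mathbf{R})=e^{i\mathbf{k}\cdot\mathbf{R}}\Psi(\mathbf{x})$. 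Since the phase factor $e^{i\mathbf{k}\cdot\mathbf{R}}$ depends only on the fixed wave vector $\mathbf{k}$ and on the translation $\mathbf{R}$, the value of $\Psi$ at any point is recovered from its value at the corresponding representative $\mathbf{x}_0\in\Omega$. Hence the restriction $\Psi|_{\Omega}$ together with $\mathbf{k}$ determines $\Psi$ on all of $\mathbb{R}^2$, which is the assertion.

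The argument is essentially a direct computation, so the only point requiring genuine care is the well-definedness of the decomposition on $\partial\Omega$, where a point may possess several representatives differing by a lattice vector. I would address this by observing that the quasi-periodicity relation is precisely the compatibility condition that renders $\Psi$ consistent across the identified boundary faces of $\Omega$; for interior points the representative is unique and no ambiguity arises. This also motivates the $\mathbf{k}$-quasi-periodic boundary conditions that must be imposed when the eigenvalue problems \eqref{TM} and \eqref{TE} are reformulated on $\Omega$, so I would phrase the conclusion so as to foreshadow that reduction.
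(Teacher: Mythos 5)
Your argument is correct. The paper states this proposition without proof, and your reasoning --- that $\Omega$ is a fundamental domain for the lattice $\Lambda$, so the periodicity of $u$ yields the quasi-periodicity $\Psi(\mathbf{x}+\mathbf{R})=e^{i\mathbf{k}\cdot\mathbf{R}}\Psi(\mathbf{x})$ and hence $\Psi|_{\Omega}$ together with $\mathbf{k}$ determines $\Psi$ on all of $\mathbb{R}^2$ --- is precisely the standard argument the paper leaves implicit, including your remark that the quasi-periodicity relation is what makes the boundary identification on $\partial\Omega$ consistent and motivates the periodic boundary conditions used later in \eqref{TM2}--\eqref{TE2}.
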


\begin{proposition}\label{IBZ propertity}
In the two-dimensional case, the wave vector $\mathbf{k}$ can be restricted into the so-called (first) Brillouin zone $\mathcal{B}$, which is the region closer to a certain lattice point than to any other lattice points in the reciprocal lattice. In addition, note that in some cases, further utilization of symmetry can even restrict $\mathbf{k}$ to the triangular irreducible Brillouin zone $\mathcal{B}_{\operatorname{red}}$ \cite{joannopoulos2008molding}. The Brillouin zones and irreducible Brillouin zones of the square lattice and the hexagonal lattice are depicted on the right of Figures \ref{lattice} and \ref{lattice3}.
\end{proposition}
Bloch's theorem implies the solutions to \eqref{TM} and \eqref{TE} can be expressed as $E(\mathbf{x}) =e^{i\mathbf{k}\cdot \mathbf{x}}u_1(\mathbf{x})$ and  $H(\mathbf{x})=e^{i\mathbf{k}\cdot \mathbf{x}}u_2(\mathbf{x})$ for some periodic functions $u_1(\mathbf{x})$ and $u_1(\mathbf{x})$ in the unit cell $\Omega$. Consequently, \eqref{TM} and \eqref{TE} are reduced to
\begin{subequations}
\begin{align}  
-(\nabla+i\mathbf{k})\cdot\left(\frac{1}{\epsilon(\mathbf{x})}(\nabla+i\mathbf{k}) u_2(\mathbf{x})\right)-\left(\frac{\omega}{c}\right)^{2} u_2(\mathbf{x})&=0,\quad \mathbf{x}\in\Omega && \textbf{(TE mode)},\label{TE2}\\
   -(\nabla+i\mathbf{k})\cdot\left((\nabla+i\mathbf{k}) u_1(\mathbf{x})\right)-\left(\frac{\omega}{c}\right)^{2}\epsilon(\mathbf{x})u_1(\mathbf{x})&=0,\quad \mathbf{x}\in\Omega&& \textbf{(TM mode)},\label{TM2} 
\end{align}
\end{subequations}
where $\mathbf{x}\in\Omega\subset\mathbb{R}^2$, $\mathbf{k}$ varies in the Brillouin zone, and $u_i(\mathbf{x})$ satisfies
 the periodic boundary conditions $u_i(\mathbf{x})=u_i(\mathbf{x}+\mathbf{a_j})$ with $\mathbf{a_j}$ being the primitive lattice vector for $i,j=1,2$.

Now we consider both modes simultaneously by
\begin{equation}\label{both}
    -(\nabla+i\mathbf{k})\cdot \alpha(\mathbf{x})(\nabla+i\mathbf{k})u(\mathbf{x})-\lambda\beta(\mathbf{x})u(\mathbf{x})=0, \quad \mathbf{x}\in\Omega, 
\end{equation}
with $\mathbf{x}\in\Omega\subset\mathbb{R}^2$, $\mathbf{k} \in \mathcal{B}$, and $\lambda =\left(\frac{\omega}{c}\right)^2$. In TE mode, $u$ describes the magnetic field $H$ in $z$-direction and the coefficients $\alpha(\mathbf{x})$ and $\beta(\mathbf{x})$ are
\begin{align*}
    \alpha(\mathbf{x}):=\frac{1}{\epsilon(\mathbf{x})}, \quad\beta(\mathbf{x}):=1.
 \end{align*}
Similarly, in TM mode, $u$ describes the electric field $E$ in $z$-direction and the coefficients $\alpha(\mathbf{x})$ and $\beta(\mathbf{x})$ are 
\begin{align*}   \alpha(\mathbf{x}):=1,\quad\beta(\mathbf{x}):=\epsilon(\mathbf{x}). 
\end{align*}

\section{Regularity of band functions and eigenfunctions}\label{sec:property}
To further analyze the properties of band functions, we first define some function spaces. Let $L^2_{\beta}(\Omega)$ denote the space of weighted square integrable
functions equipped with the norm,

 \begin{align*}
\|f\|_{L^2_{\beta}(\Omega)}:=\left(\int_{\Omega} \beta(\mathbf{x})|f(\mathbf{x})|^2\mathrm{d}\mathbf{x}\right)^{\frac{1}{2}}.
 \end{align*}
 
Let $H^1(\Omega)\subset L^2_{\beta}(\Omega)$ with square integrable gradient be equipped with the standard $H^1$ norm. $H^1_{\pi}(\Omega)\subset H^1(\Omega)$ is composed of functions with periodic boundary conditions on $\partial\Omega$. Moreover, let 
\begin{align*}
H^1_{\pi}(\Omega,\Delta,\alpha):=\left\{v\in H^1_{\pi}(\Omega): \Delta v\in L^2(\Omega), \alpha\partial_{\mathbf{n}_L}v|_{L}=-\alpha\partial_{\mathbf{n}_R}v|_{R}\text{ and } \alpha\partial_{\mathbf{n}_T}u|_{T}=-\alpha\partial_{\mathbf{n}_B}u|_{B}
\right\}
\end{align*}
with $\partial_{\mathbf{n}_L},\partial_{\mathbf{n}_R},\partial_{\mathbf{n}_B},\partial_{\mathbf{n}_T}$ denoting the outward normal derivatives on the left, right, bottom and top boundaries of $\Omega$, respectively. 

Bloch's theorem expands the original operator $\mathcal{L}:=-\frac{1}{\beta(\mathbf{x})}\nabla \cdot \alpha(\mathbf{x})\nabla$ defined on Sobolev space $H^2(\mathbb{R}^2)$ into a new set of operators $\mathcal{L}_{\mathbf{k}}:=-\frac{1}{\beta(\mathbf{x})}(\nabla+i\mathbf{k}) \cdot \alpha(\mathbf{x})(\nabla+i\mathbf{k})$ defined on $H^1_{\pi}(\Omega,\Delta,\alpha)$.
The following theorem proved in \cite{glazman1965direct} represents the spectrum of the operator $\mathcal{L}$ using that of $\mathcal{L}_{\mathbf{k}}$.
\begin{theorem}\label{thm:spectrum}
For all $\mathbf{k}\in \mathcal{B}$, $\mathcal{L}_{\mathbf{k}}$ has a non-negative discrete spectrum. We can enumerate these eigenvalues in a non-decreasing manner and repeat according to their finite multiplicities as
\begin{align*}  0\leq\lambda_1(\mathbf{k})\leq\lambda_2(\mathbf{k})\leq\cdots \leq \lambda_n(\mathbf{k})\leq\cdots\leq \infty.
\end{align*}
$\{\lambda_n(\mathbf{k})\}_{n=1}^{\infty}$ is an infinite sequence with $\lambda_n(\mathbf{k})$ being a continuous function with respect to the wave vector $\mathbf{k}$ and $\lambda_n(\mathbf{k})\to \infty$ when $n \to \infty$.
Moreover, the spectrum $\sigma(\mathcal{L})$ of the operator $\mathcal{L}$ is connected to the spectrum $\sigma(\mathcal{L}_{\mathbf{k}})$ of the operators $\mathcal{L}_{\mathbf{k}}$ through
\begin{align*}
    \sigma(\mathcal{L})=\bigcup _{\mathbf{k}\in \mathcal{B}}\sigma(\mathcal{L}_{\mathbf{k}}).
\end{align*} \label{theo3.1}
\end{theorem}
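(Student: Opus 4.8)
The plan is to prove the three assertions in turn: discreteness and non-negativity of each fiber spectrum $\sigma(\mathcal{L}_{\mathbf{k}})$, the monotone enumeration together with continuity and divergence of the eigenvalues, and finally the direct-integral identity for $\sigma(\mathcal{L})$. First I would set up the variational framework. In the weighted inner product $\langle u,v\rangle:=\int_\Omega u\bar v\,\beta\,\mathrm{d}\mathbf{x}$ on $L^2(\Omega)$, integration by parts against the periodicity and flux conditions built into $H^1_\pi(\Omega,\Delta,\alpha)$ identifies $\mathcal{L}_{\mathbf{k}}$ as the self-adjoint operator associated with the Hermitian sesquilinear form
$$a_{\mathbf{k}}(u,v):=\int_\Omega \alpha(\mathbf{x})\,(\nabla+i\mathbf{k})u\cdot\overline{(\nabla+i\mathbf{k})v}\,\mathrm{d}\mathbf{x},\qquad u,v\in H^1_\pi(\Omega).$$
Since $\alpha>0$, the form is non-negative, which already gives $\sigma(\mathcal{L}_{\mathbf{k}})\subset[0,\infty)$; elementary estimates of the type $|(\nabla+i\mathbf{k})u|^2\le 2|\nabla u|^2+2|\mathbf{k}|^2|u|^2$ (and their reverse) show that $a_{\mathbf{k}}$ is bounded and satisfies a G{\aa}rding inequality on $H^1_\pi(\Omega)$, so the Friedrichs construction yields a self-adjoint operator bounded below. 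The crucial structural input is that the embedding $H^1_\pi(\Omega)\hookrightarrow L^2(\Omega)$ is compact (Rellich--Kondrachov on the bounded cell $\Omega$), whence $\mathcal{L}_{\mathbf{k}}$ has compact resolvent. The spectral theorem for self-adjoint operators with compact resolvent then gives a purely discrete spectrum of eigenvalues of finite multiplicity accumulating only at $+\infty$, listable in nondecreasing order as $0\le\lambda_1(\mathbf{k})\le\lambda_2(\mathbf{k})\le\cdots$.

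Next I would read off the monotone enumeration, the divergence, and the continuity from the Courant--Fischer min-max principle,
$$\lambda_n(\mathbf{k})=\min_{\substack{V\subset H^1_\pi(\Omega)\\ \dim V=n}}\ \max_{\substack{u\in V\\ u\ne 0}}\frac{a_{\mathbf{k}}(u,u)}{\|u\|^2}.$$
Unboundedness $\lambda_n(\mathbf{k})\to\infty$ is immediate, since compactness of the resolvent forces the eigenvalues to accumulate only at infinity (equivalently one may invoke Weyl asymptotics). For continuity in $\mathbf{k}$, the key observation is that $a_{\mathbf{k}}(u,u)$ depends quadratically, hence real-analytically, on $\mathbf{k}$ while the form domain $H^1_\pi(\Omega)$ is the same for every $\mathbf{k}$; thus $\{\mathcal{L}_{\mathbf{k}}\}$ is a holomorphic family of type (B) in Kato's sense. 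Feeding this either into the min-max quotient above or directly into Kato's perturbation theory of self-adjoint families yields that each $\lambda_n(\mathbf{k})$ is continuous, in fact locally Lipschitz, on $\mathcal{B}$. This same analytic dependence is what the subsequent results (Theorems \ref{thm:piecewise-analytic} and \ref{thm:lipchitz}) will refine.

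Finally, for the identity $\sigma(\mathcal{L})=\bigcup_{\mathbf{k}\in\mathcal{B}}\sigma(\mathcal{L}_{\mathbf{k}})$ I would invoke the Floquet--Bloch (Gelfand) transform. The substitution $E=e^{i\mathbf{k}\cdot\mathbf{x}}u_1$, $H=e^{i\mathbf{k}\cdot\mathbf{x}}u_2$ already used to pass from \eqref{TM}, \eqref{TE} to \eqref{TM2}, \eqref{TE2} is precisely the fiberwise version of a unitary map
$$\mathcal{U}:L^2(\mathbb{R}^2)\ \longrightarrow\ \int_{\mathcal{B}}^{\oplus} L^2(\Omega)\,\mathrm{d}\mathbf{k}$$
which diagonalizes the periodic operator into the direct integral $\mathcal{U}\,\mathcal{L}\,\mathcal{U}^{-1}=\int_{\mathcal{B}}^{\oplus}\mathcal{L}_{\mathbf{k}}\,\mathrm{d}\mathbf{k}$. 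The spectrum of such a direct integral coincides with the (measure-essential) union of the fiber spectra $\sigma(\mathcal{L}_{\mathbf{k}})$, and because each band $\lambda_n(\cdot)$ is continuous on the compact zone $\mathcal{B}$ its range $\lambda_n(\mathcal{B})$ is a compact interval, so $\bigcup_{\mathbf{k}}\sigma(\mathcal{L}_{\mathbf{k}})=\bigcup_n\lambda_n(\mathcal{B})$ is already a countable union of closed intervals and no additional closure is required. I expect this last step to be the main obstacle: one must construct $\mathcal{U}$ and verify both its unitarity and the intertwining relation with care, and then apply the nontrivial measurable-field machinery for the spectrum of a direct integral (as in Reed--Simon, Theorem XIII.85). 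The measurability of the family $\mathbf{k}\mapsto\mathcal{L}_{\mathbf{k}}$, which is guaranteed by the continuity established in the previous step, is exactly what makes that machinery applicable here.
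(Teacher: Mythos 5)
The paper does not actually prove this statement: Theorem \ref{thm:spectrum} is presented as a known result imported from the literature, with the proof attributed to \cite{glazman1965direct}, so there is no internal argument to compare yours against. What you have reconstructed is the standard self-contained proof, and it is essentially correct: the weighted variational formulation and Rellich compactness give a self-adjoint fiber operator with compact resolvent, hence a non-negative discrete spectrum of finite multiplicities accumulating only at $+\infty$; the Courant--Fischer principle together with the $\mathbf{k}$-independence of the form domain and the polynomial dependence of $a_{\mathbf{k}}$ on $\mathbf{k}$ gives (local Lipschitz) continuity of each $\lambda_n$; and the Gelfand--Floquet transform reduces the band-union identity to the spectrum of a direct integral of measurable self-adjoint fibers. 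This buys the reader something the paper does not supply, namely a verifiable chain of reasoning in place of a citation, and it is also the same toolkit (min--max, Kato-type analytic dependence, the Bloch decomposition) that the paper implicitly relies on later in Theorems \ref{thm:piecewise-analytic} and \ref{thm:lipchitz}.

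One step in your sketch is stated too quickly. You claim that $\bigcup_{n}\lambda_n(\mathcal{B})$ is ``already a countable union of closed intervals and no additional closure is required.'' A countable union of compact intervals need not be closed, and the direct-integral theorem you invoke only identifies $\sigma(\mathcal{L})$ with a \emph{closed} (measure-essential) union, so to obtain the plain union as stated in the theorem you must additionally observe that $\min_{\mathbf{k}\in\mathcal{B}}\lambda_n(\mathbf{k})\to\infty$ as $n\to\infty$ (for instance by min--max comparison of $a_{\mathbf{k}}$ with a constant-coefficient form, uniformly in $\mathbf{k}$ over the compact zone $\mathcal{B}$). This makes the family of bands locally finite in $\mathbb{R}$, so the union of their ranges is closed and coincides with the essential union. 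With that one sentence added, your argument is complete.
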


In view that $\lambda_n(\mathbf{k})=\left(\frac{\omega_n(\mathbf{k})}{c}\right)^2$, Theorem \ref{thm:spectrum} implies that each band function $\omega_n(\mathbf{k})$ is continuous for all band number $n\in \mathbb{N}^+$.

Next we introduce one of the most important properties of band functions, which lays the main foundation for our proposed interpolation method. 
\begin{theorem}[Piecewise analyticity of the band functions]\label{thm:piecewise-analytic}
2D PhCs band functions are piecewise analytic in $\mathcal{B}$. In specific, each band function $\omega_n(\mathbf{k})$ is analytic in $\mathcal{B}\backslash{X}
_n$, where $X_n\subset \mathcal{B}$  consists of branch points and the origin, which has zero Lebesgue measure.
\end{theorem}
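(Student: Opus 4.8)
The plan is to realize the ordered eigenvalues $\lambda_n(\mathbf{k})=(\omega_n(\mathbf{k})/c)^2$ as roots of a single scalar analytic function on $\mathbb{R}^3$, and then transfer analyticity back to the band functions through the implicit function theorem. First I would record that the form associated with $\mathcal{L}_{\mathbf{k}}$ depends \emph{polynomially} (of degree two) on $\mathbf{k}$: expanding $(\nabla+i\mathbf{k})\cdot\alpha(\nabla+i\mathbf{k})$ produces the $\mathbf{k}$-independent part $-\frac{1}{\beta}\nabla\cdot\alpha\nabla$, a first-order term linear in $\mathbf{k}$, and a zeroth-order multiplication term quadratic in $\mathbf{k}$. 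Hence $\{\mathcal{L}_{\mathbf{k}}\}$ is a self-adjoint holomorphic family of type (A) in the sense of Kato, and each $\mathcal{L}_{\mathbf{k}}$ has compact resolvent by Theorem~\ref{thm:spectrum}.

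Next, fixing a reference resolvent $R:=(\mathcal{L}_{0}+c)^{-1}$ with $c>0$ large enough, I would rewrite the eigenvalue equation $\mathcal{L}_{\mathbf{k}}u=\lambda u$ as a perturbation of the identity, $\big(I-K(\mathbf{k},\lambda)\big)w=0$, where $K(\mathbf{k},\lambda)$ collects $(c+\lambda)R$ together with the first- and second-order $\mathbf{k}$-contributions conjugated by $R^{1/2}$. Because $R$ is compact and the $\mathbf{k}$-terms are relatively bounded, $K(\mathbf{k},\lambda)$ is a Hilbert--Schmidt operator depending jointly analytically on $(\mathbf{k},\lambda)$. Analytic Fredholm theory then guarantees that the regularized determinant
\begin{equation*}
F(\mathbf{k},\lambda):={\det}_{2}\big(I-K(\mathbf{k},\lambda)\big)
\end{equation*}
is entire in the complexified variables, its real restriction is real-analytic on $\mathbb{R}^3$, and its zero set is exactly the Bloch variety $\{(\mathbf{k},\lambda):\lambda\in\sigma(\mathcal{L}_{\mathbf{k}})\}$. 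This identifies the $\lambda_n(\mathbf{k})$ as the roots of $F(\mathbf{k},\cdot)$, and since $\mathcal{L}_{\mathbf{k}}$ is self-adjoint the order of a root equals the multiplicity of the eigenvalue.

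With $F$ in hand the analyticity statement is local. At any $(\mathbf{k}_0,\lambda_0)$ on the Bloch variety where $\lambda_0$ is a \emph{simple} eigenvalue, the root is simple, so $\partial_\lambda F(\mathbf{k}_0,\lambda_0)\neq 0$ and the implicit function theorem produces a real-analytic solution branch $\mathbf{k}\mapsto\lambda(\mathbf{k})$ near $\mathbf{k}_0$; by continuity of the ordering (Theorem~\ref{thm:spectrum}) this branch coincides with $\lambda_n$, and then $\omega_n=c\sqrt{\lambda_n}$ is analytic wherever in addition $\lambda_n>0$. The excluded set $X_n$ therefore splits into two parts: the \emph{branch points}, where $F$ and $\partial_\lambda F$ vanish simultaneously (multiple eigenvalues or crossings, at which the ordered branch may only be Lipschitz), and the \emph{origin}, where the lowest eigenvalue vanishes and the square root destroys smoothness.

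The final and most delicate step is to show $X_n$ has zero Lebesgue measure. I would argue that the projection to $\mathbf{k}$-space of the crossing set $\{F=0,\ \partial_\lambda F=0\}$ lies in the zero set of a real-analytic function of $\mathbf{k}$ alone---for instance a resultant or discriminant of $F(\mathbf{k},\cdot)$, or the analytic function encoding $\lambda_n(\mathbf{k})=\lambda_{n\pm1}(\mathbf{k})$---and that this function is not identically zero because generic $\mathbf{k}$ yield simple eigenvalues. A proper real-analytic subvariety of $\mathbb{R}^2$ has measure zero, and adjoining the single point $\mathbf{k}=0$ preserves this. I expect the genuine obstacles to be twofold: first, the rigorous construction of the globally defined analytic characteristic function in the infinite-dimensional setting, namely controlling the relatively bounded first-order $\mathbf{k}$-perturbations so that $K$ is Hilbert--Schmidt with joint analyticity and choosing ${\det}_{2}$ rather than the ordinary Fredholm determinant; and second, verifying that the relevant discriminant is not identically zero so that the measure-zero conclusion applies. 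Both are precisely where the high-contrast, piecewise-constant structure of $\alpha,\beta$ must be handled with care.
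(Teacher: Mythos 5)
Your strategy is essentially the paper's: realize the Bloch variety as the zero set of a single function $D(\mathbf{k},\lambda)$ that is analytic in all three real variables, apply the implicit function theorem for analytic functions away from the degenerate set, and argue that the degenerate set projects to a measure-zero subset of $\mathcal{B}$, finally adjoining the origin because of the square root in $\omega_n=c\sqrt{\lambda_n}$. The differences are in how the two ingredients are obtained. Where you build $F(\mathbf{k},\lambda)={\det}_2(I-K(\mathbf{k},\lambda))$ by hand via analytic Fredholm theory, the paper simply invokes Kuchment's Theorem~\ref{kuchment1} for the existence of such an entire function; your construction is more self-contained but imports exactly the technical burden (relative boundedness of the first-order $\mathbf{k}$-terms, joint analyticity, choice of regularized determinant) that the citation is designed to avoid. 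More substantively, your regular/singular dichotomy is coarser than the paper's: you take the regular set to be the simple eigenvalues, where $\partial_\lambda F\neq 0$, whereas the paper's set $B^r$ in \eqref{defn:br} is defined by the condition that $\partial^{m-1}D/\partial\lambda^{m-1}$ vanishes \emph{identically near} the point while $\partial^mD/\partial\lambda^m$ does not vanish \emph{at} it. This extra generality is not cosmetic: it lets the paper apply the implicit function theorem to $\partial^{m-1}D/\partial\lambda^{m-1}=0$ and thereby conclude analyticity even where an eigenvalue has locally constant multiplicity $m>1$ --- precisely the situation in which your discriminant would vanish identically on an open set and your measure-zero argument would break down. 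You flag this non-degeneracy issue honestly as an obstacle; the paper sidesteps it by construction of $B^r$, though its own claim that $\{D=0\}\cap\{\partial^mD/\partial\lambda^m=0\}$ is a one-dimensional variety is asserted rather than proved, so neither treatment fully closes that step. Net: same architecture, with your version trading a citation for an explicit determinant construction and a more robust stratification for a simpler but more fragile one.
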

\begin{proof}
Our proof is mainly based upon the analyticity of the Bloch variety that was proved in \cite[Theorem 4.4.2]{kuchment1993floquet}. For the sake of completeness, we repeat it in Theorem \ref{kuchment1}. Our proof is inspired by the procedure used in \cite{wilcox1978theory}, wherein the Bloch wave of Schr\"{o}dinger equation with periodic potential was considered. 

The Bloch variety is defined as 
\begin{equation}\label{bloch variety}
    B(\mathcal{L}_{\mathbf{k}})=\left\{(\mathbf{k},\lambda)\in \mathbb{R}^3:\mathcal{L}_{\mathbf{k}}u=\lambda u\text{ admits a nonzero function } u\in H^1_{\pi}(\Omega,\Delta,\alpha) 
    \right\}.
\end{equation}
Theorem \ref{kuchment1} implies that there is an analytic function $D(\mathbf{k},\lambda)$ on $\mathbb{R}^3$ such that $B(\mathcal{L}_{\mathbf{k}})$ is its set of zeros, i.e., 
\begin{align*}
B(\mathcal{L}_{\mathbf{k}})=\left\{ (\mathbf{k},\lambda)\in \mathbb{R}^3|D(\mathbf{k},\lambda)=0\right\}. 
\end{align*}
We now decompose $B(\mathcal{L}_{\mathbf{k}})$ into two types of sets, where the first type is
\begin{align}
 B^r:= \Big\{
 (\mathbf{k}_0,\lambda_0)\in B(\mathcal{L}_{\mathbf{k}}):
 &\frac{\partial^{m-1}D}{\partial \lambda^{m-1}}=0 \text{ in a neighborhood of }(\mathbf{k}_0,\lambda_0)\nonumber\\
 \text{ and }&\frac{\partial^mD}{\partial \lambda^m}|_{(\mathbf{k}_0,\lambda_0)}\neq0 \text{ for some }m\in\mathbb{N}^+\Big\}, \label{defn:br}
 \end{align}
and the second type is $B^s:=B(\mathcal{L}_{\mathbf{k}})\backslash B^r$. 

Note that $\lambda_n(\mathbf{k})\to \infty$ when $n\to \infty$ for all $\mathbf{k}\in \mathcal{B}$. To the aim of defining a bounded subset of $B(\mathcal{L}_{\mathbf{k}})$, we introduce
 \begin{align*}
 B_n:=\bigcup_{l=1}^n\left\{(\mathbf{k},\lambda_l(\mathbf{k}))|(\mathbf{k},\lambda_l(\mathbf{k}))\in B(\mathcal{L}_{\mathbf{k}})\right\}.
 \end{align*}
 Analogously, this leads to bounded subsets of $B^r$ and $B^s$ given by 
 \begin{align*}
 B_n^r:=B_n\cap B^r\quad\text{ and }\quad B_n^s:=B_n\cap B^s.
 \end{align*}
 Furthermore, let $X^s_n$ be the projection of $B_n^s$ onto $\lambda=0$ defined by 
 \begin{align}\label{branch point}
 X^s_n:=\left\{\mathbf{k}\in\mathcal{B}|(\mathbf{k},\lambda)\in B_n^s\text{ for some } \lambda\right\}.
 \end{align}
The definition of $B_n^s$ implies that $X^s_n$, also known as the branch point set, consists of points of degeneracy for the first $n$ bands, i.e., where certain band functions intersect. Moreover, for any  $(\mathbf{k}_0,\lambda_0)\in B_n^s$, there is an integer $m\in\mathbb{N}^+$ such that $(\mathbf{k}_0,\lambda_0)\in \{(\mathbf{k},\lambda):D(\mathbf{k},\lambda)=0\}\cap\{(\mathbf{k},\lambda):\frac{\partial^mD}{\partial \lambda^m}=0\}$, which is a one-dimensional variety. Hence $X^s_n$, the projection of $B_n^s$ onto the hyperplane $\lambda=0$, is a subset of $\mathcal{B}$ with Lebesgue measure zero.  Let $\mathbf{k}_0\in \mathcal{B}\backslash X^s_n$, i.e., $(\mathbf{k}_0,\lambda_l(\mathbf{k}_0))\in B_n^r$ for $l=1,\cdots,n$. Due to the implicit mapping theorem for analytic functions \cite{kaup2011holomorphic}, there is a neighborhood $N(\mathbf{k}_0)$ in which $\lambda_l(\mathbf{k})$ is the unique analytic solution of $\frac{\partial^{m-1}D}{\partial \lambda^{m-1}}=0$ for some $m\in\mathbb{N}^+$. This implies that each positive band function $\omega_l(\mathbf{k})=c\cdot\sqrt{\lambda_l(\mathbf{k})}$ is analytic in $\mathcal{B}\backslash X^s_n$. Besides, it is well known that only at the origin the first eigenvalue equals to 0. Thus, we have $X_n=X_n^s\cup\{\mathbf{0}\}$.
\end{proof}

\begin{theorem} {\textnormal{(\cite[Theorem 4.4.2]{kuchment1993floquet})}}
Let $L$ be a general periodic elliptic operator in $\mathbb{R}^n$, then the complex Bloch variety of $L$, 
 \begin{align*}
 B(L)=\{(\mathbf{k},\lambda)\in \mathbb{C}^n\times\mathbb{C}|\text{ the equation }Lu=\lambda u\text{ has a non-zero Bloch function with }\mathbf{k}\},
 \end{align*}
is the set of all zeros of an entire function on $\mathbb{C}^{n+1}$. 
\label{kuchment1}
\end{theorem}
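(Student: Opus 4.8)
The plan is to realize the defining condition of the Bloch variety as the vanishing locus of a single scalar entire function, via a holomorphic Fredholm determinant. The first step is the \emph{Floquet reduction} to a fixed space. Applying the gauge transformation $u\mapsto e^{-i\mathbf{k}\cdot\mathbf{x}}u$ turns the quasiperiodic eigenvalue equation $Lu=\lambda u$ (whose boundary conditions depend on $\mathbf{k}$) into an equation $L(\mathbf{k})v=\lambda v$ for a conjugated operator acting on the $\mathbf{k}$-independent space of genuinely periodic functions on the torus $\mathbb{T}^n$. Since conjugation replaces each $\nabla$ by $\nabla+i\mathbf{k}$, the coefficients of $L(\mathbf{k})$ are polynomials in the components of $\mathbf{k}$ and $\lambda$ enters linearly; in particular $L(\mathbf{k})$ extends to an \emph{entire} family of operators in $\mathbf{k}\in\mathbb{C}^n$, which is exactly what lets us pass from the physical real $\mathbf{k}$ to the complex parameter in the statement.

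Next I would reduce the (non)invertibility of $L(\mathbf{k})-\lambda$ to that of a perturbation of the identity by a compact operator. Fix once and for all an invertible reference operator $R$ of the same order as $L$ on the torus, for instance $R=L(0)+c$ with $c$ large, whose inverse $R^{-1}$ is compact on $L^2(\mathbb{T}^n)$ because $\mathbb{T}^n$ is compact (Rellich). As $L(\mathbf{k})-\lambda$ and $R$ have the same principal part, the difference is of lower order, so
\begin{equation*}
K(\mathbf{k},\lambda):=\bigl(L(\mathbf{k})-\lambda\bigr)R^{-1}-I
\end{equation*}
is compact; moreover $\lambda\in\sigma\bigl(L(\mathbf{k})\bigr)$, i.e.\ $(\mathbf{k},\lambda)\in B(L)$, if and only if $I+K(\mathbf{k},\lambda)$ fails to be boundedly invertible on $L^2(\mathbb{T}^n)$. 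Because $\mathbf{k}$ and $\lambda$ entered polynomially, $K$ is a holomorphic family on $\mathbb{C}^{n+1}$ with values in the compact operators.

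The analytic core is to convert non-invertibility into the vanishing of an entire function. The reference resolvent $R^{-1}$ lies in a Schatten ideal $\mathcal{S}_p\bigl(L^2(\mathbb{T}^n)\bigr)$ for $p$ sufficiently large (the singular values of $R^{-1}$ decay at a rate governed by the Weyl law on $\mathbb{T}^n$), hence so does $K(\mathbf{k},\lambda)$, holomorphically in $\mathcal{S}_p$. I would then invoke the theory of regularized determinants (Gohberg--Krein; see Simon, \emph{Trace Ideals}): the map
\begin{equation*}
D(\mathbf{k},\lambda):=\det\nolimits_{\lceil p\rceil}\bigl(I+K(\mathbf{k},\lambda)\bigr)
\end{equation*}
is then an entire function on $\mathbb{C}^{n+1}$ whose zero set coincides with the set of $(\mathbf{k},\lambda)$ at which $I+K(\mathbf{k},\lambda)$ is not invertible. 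Combining the three steps yields $D(\mathbf{k},\lambda)=0\iff(\mathbf{k},\lambda)\in B(L)$, proving the theorem; a final remark that $D\not\equiv0$ (take $\lambda$ far from the spectrum, where $L(\mathbf{k})-\lambda$ is invertible) ensures the zero set is a genuine variety and not all of $\mathbb{C}^{n+1}$.

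The step I expect to be the main obstacle is the clean execution of the compactness-and-Schatten bookkeeping over the \emph{complex} parameter $\mathbf{k}$. On the real axis $L(\mathbf{k})$ is self-adjoint and the spectral theory is transparent, but for complex $\mathbf{k}$ one must track ellipticity and the relative compactness of the lower-order remainder $K$ carefully --- especially since the physical coefficients $\alpha,\beta$ are only $L^\infty$, so the factorization through $R^{-1}$ must be phrased at the level of quadratic forms rather than classical symbol calculus --- and then confirm that the regularized determinant retains both its holomorphy and its exact zero-counting property in this several-variable setting.
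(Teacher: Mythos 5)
The paper offers no proof of this theorem---it is quoted directly from \cite[Theorem 4.4.2]{kuchment1993floquet}---so the only meaningful comparison is with Kuchment's own argument, and your proposal is essentially that argument: the Floquet/gauge reduction to a family $L(\mathbf{k})$ on the torus whose coefficients depend polynomially on $(\mathbf{k},\lambda)$, the reformulation of non-invertibility as the singularity of $I+K(\mathbf{k},\lambda)$ with $K$ holomorphic with values in a Schatten ideal $\mathcal{S}_p$, and the regularized determinant $\det\nolimits_{\lceil p\rceil}(I+K)$ as the entire function on $\mathbb{C}^{n+1}$ whose zero set is exactly $B(L)$. Your closing caveat is the right one to single out, and it is also where the cited proof does the real work: one must show that $K(\mathbf{k},\lambda)$ lies in $\mathcal{S}_p$ (not merely in the compact operators) and depends holomorphically on $(\mathbf{k},\lambda)$ in the $\mathcal{S}_p$-norm, locally uniformly, since that is precisely what guarantees both the entirety of the regularized determinant and its exact zero-counting property.
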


\begin{theorem}[mentioned in \cite{klindworth2014efficient} and proved by perturbation theory \cite{kato2013perturbation}]\label{piece1}
If we only consider one component $k_i$ of the vector $\mathbf{k}=(k_1,k_2)$, then all the positive band functions can be reordered when crossing the branch points such that they are analytic functions with respect to $k_i$.
\end{theorem}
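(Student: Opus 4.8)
The plan is to recognize the claimed resorting as a statement in one-parameter analytic perturbation theory, so that the Kato--Rellich theory of self-adjoint holomorphic families applies. First I would freeze the second component, say $k_2 = \kappa$, and let the first component $k_1 = t$ range over a real interval, writing $\mathcal{L}_t := \mathcal{L}_{(t,\kappa)}$. The decisive structural feature of the present Bloch formulation is that all of the $\mathbf{k}$-dependence sits inside the operator coefficients rather than in the boundary conditions, so every $\mathcal{L}_t$ acts on the \emph{fixed} domain $H^1_{\pi}(\Omega,\Delta,\alpha)$, independent of $t$. Expanding $-\frac{1}{\beta}(\nabla + i\mathbf{k})\cdot\alpha(\nabla + i\mathbf{k})$ with $\mathbf{k} = (t,\kappa)$ exhibits $\mathcal{L}_t = A_0 + t A_1 + t^2 A_2$, where $A_0 = \mathcal{L}_{(0,\kappa)}$ is second order, $A_1$ is first order, and $A_2$ is a bounded multiplication operator. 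This representation extends to complex $t$ and identifies $\{\mathcal{L}_t\}$ as a candidate holomorphic family of type (A) in Kato's sense.

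Next I would verify the two hypotheses needed to invoke Rellich's theorem. First, for every real $t$ the operator $\mathcal{L}_t$ is self-adjoint with respect to the weighted inner product $\langle u,v\rangle = \int_\Omega u\bar v\,\beta\,\mathrm{d}\mathbf{x}$: the first-order terms enter only through the purely imaginary factor $it$, and integrating by parts against the periodic boundary conditions encoded in $H^1_{\pi}(\Omega,\Delta,\alpha)$ cancels the boundary contributions, leaving a symmetric form. Second, by Theorem \ref{thm:spectrum} each $\mathcal{L}_t$ has purely discrete spectrum accumulating only at $+\infty$, hence compact resolvent. Together these make $\{\mathcal{L}_t\}$ a self-adjoint holomorphic family of type (A) with compact resolvent along the real axis.

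With these structural facts in place, Kato--Rellich theory \cite{kato2013perturbation} guarantees that on each real interval the eigenvalues and an associated orthonormal system of eigenfunctions of $\mathcal{L}_t$ admit a labeling $\{\tilde\lambda_n(t)\}$, $\{\tilde u_n(t)\}$ that is real-analytic in $t$, even across points where two eigenvalues coincide. The magnitude ordering $\lambda_1(t)\le\lambda_2(t)\le\cdots$ used in Theorem \ref{thm:spectrum} generally fails to be analytic precisely at such crossings, since there two analytic branches exchange their relative order; the ``resorting'' is exactly the permutation that relabels the magnitude-ordered $\lambda_n$ as the analytic branches $\tilde\lambda_n$. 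Restricting to the positive band functions we may take $\tilde\lambda_n(t)>0$ on the interval, so the square root is analytic there and $\tilde\omega_n(t) = c\sqrt{\tilde\lambda_n(t)}$ inherits analyticity in $t$, which is the claim.

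The crux of the argument — and the reason the hypothesis that only one component varies is indispensable — is that Rellich's theorem is genuinely one-dimensional: analytic eigenvalue branches of a self-adjoint family can always be chosen across crossings when the parameter is a single real variable, but this may fail for families depending on two or more real parameters. This is entirely consistent with Theorem \ref{thm:piecewise-analytic}, where the full band functions $\omega_n(\mathbf{k})$ over the two-dimensional zone $\mathcal{B}$ are only piecewise analytic and cannot in general be globally resorted into analytic functions. I therefore expect the main technical obstacle not to be the perturbation theory itself, but the careful verification of self-adjointness together with the type-(A) property on the $\mathbf{k}$-independent domain $H^1_{\pi}(\Omega,\Delta,\alpha)$ — in particular checking that $A_1$ is relatively bounded with respect to $A_0$ with relative bound zero, so that the family is indeed holomorphic of type (A) and Rellich's theorem applies verbatim.
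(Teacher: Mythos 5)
The paper gives no proof of this theorem --- it is stated as a known result, attributed to \cite{klindworth2014efficient} and to one-parameter analytic perturbation theory in \cite{kato2013perturbation} --- and your proposal is precisely the Kato--Rellich argument that citation points to: identify $\mathcal{L}_{(t,\kappa)}=A_0+tA_1+t^2A_2$ on the $\mathbf{k}$-independent domain $H^1_{\pi}(\Omega,\Delta,\alpha)$ as a self-adjoint holomorphic family of type (A) with compact resolvent, and invoke Rellich's theorem to obtain analytic eigenvalue branches across crossings. Your argument is correct and matches the intended approach, including the correct observation that the one-dimensionality of the parameter is essential and consistent with the merely piecewise analyticity in Theorem \ref{thm:piecewise-analytic}.
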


Theorem \ref{thm:piecewise-analytic} states that band functions are piecewise analytic with possible singularities occurring on branch points and at the origin. Theorem \ref{piece1} further shows the analytic continuation of band functions in one variable through branch points. In the following, we will discuss the properties of these singular points and the smoothness of band functions. To this end, we first study the limit of eigenfunctions along any band functions. 

The variational formulation of \eqref{both} is: for $\mathbf{k}\in\mathcal{B}$, find non-trivial eigenpair $(\lambda,u) \in (\mathbb{R},H^1_{\pi}(\Omega))$ satisfying
\begin{equation}\label{variational}
\left\{
\begin{aligned}   \int_{\Omega}\alpha(\nabla+i\mathbf{k})u\cdot(\nabla-i\mathbf{k})\Bar{v}-\lambda\beta u\Bar{v}\mathrm{d}x&=0, \text{ for all }v \in H^1_{\pi}(\Omega)\\
\|u\|_{L^2_{\beta}(\Omega)}&=1.
    \end{aligned}
\right.
\end{equation}
Using the sesquilinear forms
\begin{align*}
    a(u,v)&:=\int_{\Omega}\alpha (\nabla+i\mathbf{k})u\cdot(\nabla-i\mathbf{k})\Bar{v}\,\mathrm{d}x,\\
    b(u,v)&:=\int_{\Omega}\beta u\Bar{v}\,\mathrm{d}x,
\end{align*}
\eqref{variational} reads: for $\mathbf{k}\in\mathcal{B}$, find non-trivial eigenpair $(\lambda,u) \in (\mathbb{R},H^1_{\pi}(\Omega))$ such that
\begin{equation}\label{simply}
    \left\{
\begin{aligned}
a(u,v)&=\lambda b(u,v), \text{ for all }v \in H^1_{\pi}(\Omega) \\
b(u,u)&=1.
\end{aligned}
\right.
\end{equation}
Suppose that we fix a particular $\mathbf{k}$, then we denote the eigenspace of one of the corresponding eigenvalues $\lambda$ as $E(\lambda)$ and let $\Tilde{\mathcal{L}}_{\mathbf{k}}$ be the operator defined on the quotient space $\Tilde{H}_{\pi}^1(\Omega,\Delta,\alpha):=H_{\pi}^1(\Omega,\Delta,\alpha)/E(\lambda)$ with the same form as $\mathcal{L}_{\mathbf{k}}$. Then the unique different value between the resolvent sets of $\Tilde{\mathcal{L}}_{\mathbf{k}}$ and $\mathcal{L}_{\mathbf{k}}$ is $\lambda$. More precisely, they have the relation $\rho(\Tilde{\mathcal{L}}_{\mathbf{k}})=\rho({\mathcal{L}}_{\mathbf{k}})\cup\{ \lambda\}$, i.e., $\lambda$ is in the resolvent set of $\Tilde{\mathcal{L}}_{\mathbf{k}}$.
In the following, we investigate the regularity of the eigenfunctions.
\begin{theorem}[Continuity of the eigenfunctions in $\mathcal{B}\backslash X_n$]\label{thm:eigenfun}
Let $\omega_n(\mathbf{k})$ be the $n$th band function for $n\in\mathbb{N}^+$, and let $u(x;\mathbf{k})$ be one of its corresponding normalized eigenfunctions if the corresponding eigenvalue $\lambda_n(\mathbf{k})$ has multiplicity larger than one,
then the following statements hold,
\begin{itemize}
\item[(i)] $u(x;\mathbf{k})$ can be defined such that $u(x;\mathbf{k})$ is continuous with respect to the wave vector $\mathbf{k}$ for $\mathbf{k}\notin X_n$. 
\item[(ii)] If $\mathbf{k}\in X_n$ and the multiplicity of the eigenvalue $\lambda_n(\mathbf{k})$ is $M\geq 2$ with $\{u_q(\mathbf{x};\mathbf{k})\}_{q=1}^M$ being its associated eigenfunctions, then there may be a normalized eigenfunction $u(x;\mathbf{k})$ that admits jump discontinuity, i.e., there is $\{c_{i\pm}^q\}_{q=1}^M\subset\mathbb{C}$, satisfying
\begin{align*}
\lim_{\delta\to 0}u(\mathbf{x};\mathbf{k}\pm \delta e_i)&= \sum_{q=1}^M c_{i\pm}^q u_q(\mathbf{x};\mathbf{k})\text{ and } \left\|\sum_{q=1}^M c_{i\pm}^q u_q(\mathbf{x};\mathbf{k})\right\|=1,\\
\text{but } \sum_{q=1}^M c_{i+}^q u_q(\mathbf{x};\mathbf{k})&\neq \sum_{q=1}^M c_{i-}^q u_q(\mathbf{x};\mathbf{k}).
\end{align*} 
Here, $\delta>0$ is a parameter such that $\mathbf{k}\pm\delta e_i\in\mathcal{B}$ for $i=1,2$ and $(e_i)_{i=1,2}$ is the canonical basis in $\mathbb{R}^2$. 
\end{itemize}
\end{theorem}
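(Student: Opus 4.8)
The plan is to study how a normalized eigenfunction responds to an infinitesimal shift of the wave vector in a coordinate direction, converting the continuity question into the solvability of an inhomogeneous equation for the difference of eigenfunctions. Fix $n$ and a base point $\mathbf{k}$, abbreviate $\lambda(\mathbf{k})=\lambda_n(\mathbf{k})$, and introduce the difference
\[
e_i(\delta):=u(\mathbf{x};\mathbf{k}+\delta\mathbf{e}_i)-u(\mathbf{x};\mathbf{k}),
\]
where $\delta$ is small enough that $\mathbf{k}+\delta\mathbf{e}_i\in\mathcal{B}$. The aim is to show that the part of $e_i(\delta)$ transverse to the eigenspace $E(\lambda)$ tends to zero, and to locate the obstruction to full continuity in the dimension of $E(\lambda)$.

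First I would derive the equation governing $e_i(\delta)$. Subtracting the eigenvalue equation \eqref{both} at $\mathbf{k}$ from the one at $\mathbf{k}+\delta\mathbf{e}_i$ and expanding the shifted operator $\mathcal{L}_{\mathbf{k}+\delta\mathbf{e}_i}$ in powers of $\delta$, every contribution that is not already $\left(\mathcal{L}_{\mathbf{k}}-\lambda(\mathbf{k})\right)$ acting on $e_i(\delta)$ is proportional either to $\delta$, to $\delta^2$, or to the scalar increment $\lambda(\mathbf{k}+\delta\mathbf{e}_i)-\lambda(\mathbf{k})$. Collecting these into a source $f(\mathbf{x};\delta)$ yields
\[
\left(\mathcal{L}_{\mathbf{k}}-\lambda(\mathbf{k})\right)e_i(\delta)=f(\mathbf{x};\delta),
\]
tested in weak form against $v\in H^1_{\pi}(\Omega)$. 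The source $f$ is built from $u(\mathbf{x};\mathbf{k}+\delta\mathbf{e}_i)$ and its first derivatives weighted by $\alpha$ and $\beta$; since the eigenfunctions lie in $H^1_{\pi}(\Omega,\Delta,\alpha)$, each such term is in $L^2(\Omega)$, and using the continuity of $\lambda(\mathbf{k})$ from Theorem \ref{thm:spectrum} one obtains $\|f(\mathbf{x};\delta)\|\to 0$ as $\delta\to 0$.

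Because $\mathcal{L}_{\mathbf{k}}-\lambda(\mathbf{k})$ is not invertible, I would pass to the quotient space $\Tilde{H}^1_{\pi}(\Omega,\Delta,\alpha)=H^1_{\pi}(\Omega,\Delta,\alpha)/E(\lambda)$, on which the induced operator $\Tilde{\mathcal{L}}_{\mathbf{k}}$ satisfies $\lambda\in\rho(\Tilde{\mathcal{L}}_{\mathbf{k}})$ as noted above. Since $\Tilde{\mathcal{L}}_{\mathbf{k}}$ inherits a compact resolvent, the Fredholm--Riesz--Schauder theory furnishes a unique solution $e_i(\delta)=(\Tilde{\mathcal{L}}_{\mathbf{k}}-\lambda\mathbf{I})^{-1}f(\mathbf{x};\delta)$ with the bound $\|e_i(\delta)\|\le\|(\Tilde{\mathcal{L}}_{\mathbf{k}}-\lambda\mathbf{I})^{-1}\|\,\|f(\mathbf{x};\delta)\|$. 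Together with $\|f(\mathbf{x};\delta)\|\to 0$, this forces the component of $e_i(\delta)$ transverse to $E(\lambda)$ to vanish, so that $u(\mathbf{x};\mathbf{k}+\delta\mathbf{e}_i)$ converges to an element of $E(\lambda)$. When $\mathbf{k}\notin X_n$ the eigenvalue is simple, $E(\lambda)$ is one-dimensional, and after fixing the phase continuously the limit is exactly $u(\mathbf{x};\mathbf{k})$, which gives (i). When $\mathbf{k}\in X_n$ with multiplicity $M\ge 2$, the limit is only constrained to lie in $\mathrm{span}\{u_q(\mathbf{x};\mathbf{k})\}_{q=1}^M$, and the coefficients obtained as $\delta\to 0^+$ need not match those obtained as $\delta\to 0^-$, which is the mechanism behind the jump in (ii).

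The main obstacle I anticipate is the quotient-space solvability estimate: one must verify that $\Tilde{\mathcal{L}}_{\mathbf{k}}$ has a compact resolvent and that $\lambda$ genuinely lies in its resolvent set, so that $\|(\Tilde{\mathcal{L}}_{\mathbf{k}}-\lambda\mathbf{I})^{-1}\|$ is finite and uniform in $\delta$; this uniformity is precisely what lets $\|f(\mathbf{x};\delta)\|\to 0$ drive $e_i(\delta)$ to zero modulo $E(\lambda)$. A secondary subtlety is part (ii): the argument only shows that the one-sided limits exist inside $E(\lambda)$, so establishing that a true jump can occur requires exhibiting that the left- and right-limiting unit vectors differ, which is possible exactly because $\dim E(\lambda)\ge 2$ leaves room for the two approach directions to select different normalized elements of the eigenspace.
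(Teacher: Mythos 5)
Your proposal follows essentially the same route as the paper's own proof: the difference function $e_i(\delta)$, the inhomogeneous equation $(\mathcal{L}_{\mathbf{k}}-\lambda(\mathbf{k}))e_i(\delta)=f(\mathbf{x};\delta)$ with $\|f\|\to 0$, the passage to the quotient space $\Tilde{H}^1_{\pi}(\Omega,\Delta,\alpha)$ where Fredholm--Riesz--Schauder theory gives $\|e_i(\delta)\|\le\|(\Tilde{\mathcal{L}}_{\mathbf{k}}-\lambda\mathbf{I})^{-1}\|\,\|f\|$, and the conclusion that the one-sided limits land in $E(\lambda)$, with the jump in (ii) traced to $\dim E(\lambda)\ge 2$.

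The one point where you diverge from (and fall slightly short of) the paper is in part (i): you assert that $\mathbf{k}\notin X_n$ forces the eigenvalue to be simple. That is not what the paper's framework gives. The regular set $B^r$ defined in \eqref{defn:br} explicitly admits points with $m\ge 2$, i.e.\ eigenvalues of multiplicity $M>1$ whose multiplicity persists in a whole neighborhood; such points are \emph{not} in $X_n$. The paper handles this by observing that the definition of $B^r$ supplies a neighborhood $N(\mathbf{k})$ on which $\lambda(\mathbf{k}\pm\delta\mathbf{e}_i)$ has the same multiplicity $M$, so that even for $M>1$ one can organize a combination of the $M$ basis eigenfunctions whose left and right limits agree, yielding continuity. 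Your phase-fixing argument covers only $M=1$; to complete (i) you need this constant-multiplicity-in-a-neighborhood step, which is also exactly what fails at $\mathbf{k}\in X_n$ (since $X_n$ has measure zero, no such neighborhood exists) and thereby cleanly separates case (ii) from case (i).
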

\begin{proof}
For the sake of simplicity, we drop the band number for the moment. Let $\omega(\mathbf{k})$ be the band function with $u(\mathbf{x};\mathbf{k})$ being the associated eigenfunction for any $\mathbf{k}\in \mathcal{B}$. Let the error function be 
\begin{align*}
e_i(\delta):=u(\mathbf{x};\mathbf{k}+\delta e_i)-u(\mathbf{x};\mathbf{k}),
\end{align*}
with $\delta>0$ being a parameter such that $\mathbf{k}+\delta e_i\in\mathcal{B}$. 

By an application of \eqref{both} and \eqref{simply}, we deduce that the error function $e_i(\delta)$ satisfies the strong formulation
\begin{equation}\label{ei 1}
  \left(\mathcal{L}_\mathbf{k}-\lambda(\mathbf{k})\right)e_i(\delta)=
  f(\mathbf{x};\delta),
\end{equation}
where 
\begin{align*}
    f(\mathbf{x};\delta)=
    &\left(\lambda(\mathbf{k}+\delta e_i)-\lambda(\mathbf{k})\right)u(\mathbf{x};\mathbf{k}+\delta e_i)
    -\delta(2k_i+\delta)\frac{1}{\epsilon(\mathbf{x})}u(\mathbf{x};\mathbf{k}+\delta e_i)\\
    &+\delta i\frac{1}{\beta(\mathbf{x})}\frac{\partial}{\partial x_i}\left(\alpha(\mathbf{x})u(\mathbf{x};\mathbf{k}+\delta e_i)\right)+\delta i\frac{1}{\epsilon(\mathbf{x})}\frac{\partial}{\partial x_i}u(\mathbf{x};\mathbf{k}+\delta e_i).
\end{align*}
The corresponding weak formulation is 
\begin{equation*}
a(e_i(\delta),v)-\lambda(\mathbf{k}) b(e_i(\delta),v)=g(v;\mathbf{k},\lambda,u), 
\text{ for all } v \in H^1_{\pi}(\Omega), 
\end{equation*}
with $g(v;\mathbf{k},\lambda,u)$ being 
\begin{equation*}
\begin{aligned}
g(v;\mathbf{k},\lambda,u)=&\left(\lambda(\mathbf{k}+\delta e_i)-\lambda(\mathbf{k})\right)b(u(\mathbf{x};\mathbf{k}+\delta e_i),v)\\
&-\delta(2k_i+\delta)m_{\alpha}(u(\mathbf{x};\mathbf{k}+\delta e_i),v)-\delta m_{\alpha i}(u(\mathbf{x};\mathbf{k}+\delta e_i),v),
\end{aligned}
\end{equation*}
and 
\begin{subequations}\label{eq:m-def}
\begin{align}
&m_{\alpha i}(u,v)=\int_{\Omega}i\alpha\left(u
\frac{\partial\Bar{v}}{\partial x_i}-\Bar{v} \frac{\partial u}{\partial x_i}\right)\,\mathrm{d}\mathbf{x}, \quad i=1,2,\\
&m_{\alpha}(u,v)=\int_{\Omega}\alpha u \Bar{v}\,\mathrm{d}\mathbf{x}.
\end{align}
\end{subequations}
Following the Fredholm–Riesz–Schauder theory \cite{sauter2010boundary}, we can derive that for a given $\mathbf{k}$ and a corresponding eigenvalue $\lambda(\mathbf{k})$, Problem \eqref{ei 1} has a unique solution $e_i(\delta)=(\Tilde{\mathcal{L}}_{\mathbf{k}}-\lambda \mathbf{I})^{-1}f(\mathbf{x};\delta)$ in the quotient space $\Tilde{H}_{\pi}^1(\Omega,\Delta,\alpha)$ which is bounded by
\begin{equation}\label{eq:xxxxx}
    \Vert e_i(\delta)\Vert_{\Tilde{H}_{\pi}^1(\Omega)}\leq 
    \Vert (\Tilde{\mathcal{L}}_{\mathbf{k}}-\lambda \mathbf{I})^{-1}
    \Vert_{L^2_{\beta}(\Omega)\to \Tilde{H}_{\pi}^1(\Omega,\Delta,\alpha)}\cdot\Vert f(\mathbf{x};\delta)\Vert_{L^2_{\beta}(\Omega)}.
\end{equation}

Note that $\lambda(\mathbf{k})$ is a continuous function and $u(\mathbf{x};\mathbf{k})\in H_{\pi}^1(\Omega,\Delta,\alpha)$, which lead to $\|f(\mathbf{x};\delta)\|_{L^2_{\beta}(\Omega)}\to 0$ as $\delta\to 0$. Together with \eqref{eq:xxxxx}, the error function $e_i(\delta)$ converges to some function in $E(\lambda(\mathbf{k}))$ as $\delta\to 0$. In a similar manner, we can show that $u(\mathbf{x};\mathbf{k}-\delta e_i)-u(\mathbf{x};\mathbf{k})$ converges to some function in $E(\lambda(\mathbf{k}))$ as $\delta\to 0$, i.e., 
\begin{align}\label{eq:111111}
\lim_{\delta\to 0}u(\mathbf{x};\mathbf{k}\pm \delta e_i)\in E(\lambda(\mathbf{k})).
\end{align}
Next, we discuss case by case whether a given wave vector $\mathbf{k}$ belongs to the singular set $X_n$ as defined in  
Theorem \ref{thm:piecewise-analytic}.
If $\mathbf{k}\notin X_n$ and the multiplicity of $\lambda$ is $1\leq M\in\mathbb{N}^{+}$, then the definition of $B^r$ \eqref{defn:br} implies the existence of a neighborhood $N(\mathbf{k})$ such that for any $\mathbf{k}\pm\delta e_i\in N(\mathbf{k})$, the eigenvalues  $\lambda(\mathbf{k}\pm\delta e_i)$ have the same multiplicity. Together with \eqref{eq:111111}, this implies the corresponding eigenfunction satisfying 
\begin{align*}
\lim_{\delta\to 0}u(\mathbf{x};\mathbf{k}\pm \delta e_i)&= \sum_{q=1}^M c_{i\pm}^q u_q(\mathbf{x};\mathbf{k}),\\
\left\|\sum_{q=1}^M c_{i\pm}^q u_q(\mathbf{x};\mathbf{k})\right\|_{L^2_{\beta}(\Omega)}&=1.
\end{align*} 
Here, $\{c_{i\pm}^q\}_{q=1}^M\subset\mathbb{C}$ are some constant. Thus, we can always find a combination of these $M$ normalized eigenfunctions $\{u_q(\mathbf{x};\mathbf{k})\}_{q=1}^M$ such that there is a normalized eigenfunction $u(\mathbf{x};\mathbf{k})$, satisfying 
\begin{equation*}
   \lim_{\delta\to 0} u(\mathbf{x};\mathbf{k}+\delta e_i)=\lim_{\delta\to 0} u(\mathbf{x};\mathbf{k}-\delta e_i)=u(\mathbf{x};\mathbf{k}),\quad \text{ for } i=1,2.  
\end{equation*}
When $\mathbf{k}\in X_n$ is a singular point and $\lambda(\mathbf{k})$ has multiplicity $M\geq 2$, there is no such kind of neighborhood $N(\mathbf{k})$ such that for any $\mathbf{k}\pm\delta e_i\in N(\mathbf{k})$, the multiplicity of $\lambda(\mathbf{k}\pm\delta e_i)$ is also $M$ since the Lebesgue measure of $X_n$ vanishes. Consequently, there is no guarantee we can construct such kind of normalized eigenfunctions to ensure the continuity at $\mathbf{k}$. We only have for any normalized eigenfunction $u(\mathbf{x};\mathbf{k}\pm \delta e_i)$ at $\mathbf{k}\pm \delta e_i$,
\begin{equation*}
    \lim_{\delta\to 0}u(\mathbf{x};\mathbf{k}\pm \delta e_i)= \sum_{q=1}^{M} c_{i\pm}^q u_q(\mathbf{x};\mathbf{k}),
\end{equation*}
with some constant $\{c_{i\pm}^q\}_{q=1}^M\subset\mathbb{C}$ such that $\|\sum_{q=1}^M c_{i\pm}^q u_q(\mathbf{x};\mathbf{k})\|_{L^2_{\beta}(\Omega)}=1$ for $i=1,2$. This completes the proof.
\end{proof}
\begin{remark}[Differentiability of the eigenfunctions in $\mathcal{B}\backslash X_n$]
 As is shown in \cite{klindworth2014efficient}, we can further prove that the eigenfunctions corresponding to the $n$th band function can indeed be organized to be continuously differentiable of any degree in $\mathcal{B}\backslash X_n$. However, the properties of eigenfunctions at $X_n$ are not discussed in the aforementioned paper. The authors conjectured that there may exist an ordering of band functions such that the corresponding eigenfunctions are also continuously differentiable at $X_n$. In contrast, Theorem \ref{thm:eigenfun} indicates the possibility of discontinuity of the eigenfunctions in $X_n$. 
 \end{remark}
Now, we are able to prove the regularity of band functions.
\begin{theorem}[Lipschitz continuity of the band functions]\label{thm:lipchitz}
For the case of 2D PhCs, $\omega_n(\mathbf{k})\in Lip(\mathcal{B})\cap \mathring{A}(\mathcal{B})$ for all $n\in\mathbb{N}^+$. Here, $ Lip(\mathcal{B})$ is the space of Lipschitz continuous functions in the Brillouin zone $\mathcal{B}$ and $\mathring{A}(\mathcal{B})$ denotes the space composed of piecewise analytic functions with their singular point sets having a zero Lebesgue measure.
\end{theorem}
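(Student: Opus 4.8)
The plan is to treat the two memberships separately, since the piecewise-analytic part is essentially already in hand. Theorem \ref{thm:piecewise-analytic} shows that each $\omega_n$ is analytic on $\mathcal{B}\backslash X_n$ with $X_n=X_n^s\cup\{\mathbf{0}\}$ a set of zero Lebesgue measure (a finite union of analytic arcs together with the origin), which is exactly the statement $\omega_n\in\mathring{A}(\mathcal{B})$. Hence the real work is to prove $\omega_n\in Lip(\mathcal{B})$, and the heart of the matter is a uniform bound on the first $\mathbf{k}$-derivatives of $\lambda_n=(\omega_n/c)^2$ on the analytic part.

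First I would derive a Hellmann--Feynman-type formula. Fix $\mathbf{k}\notin X_n$, where by Theorem \ref{thm:eigenfun} and the subsequent remark a normalized eigenpair $(\lambda_n(\mathbf{k}),u(\cdot;\mathbf{k}))$ can be chosen to depend differentiably on $\mathbf{k}$. Differentiating the identity $a(u,v)=\lambda b(u,v)$ from \eqref{simply} with respect to $k_i$, testing with $v=u$, and using the Hermitian symmetry of $a$ and $b$ together with the eigenvalue relation to cancel the terms containing $\partial_{k_i}u$, I obtain $\partial_{k_i}\lambda_n=m_{\alpha i}(u,u)+2k_i\,m_{\alpha}(u,u)$, with $m_{\alpha i}$ and $m_{\alpha}$ as in \eqref{eq:m-def}. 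This is the derivative formula advertised in the introduction; it also makes transparent why $\nabla_{\mathbf{k}}\lambda_n$ may jump at branch points of $X_n^s$, since there the choice of $u$ inside a multidimensional eigenspace is not locally constant.

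Second, I would bound this formula uniformly. The normalization $b(u,u)=1$ with $\beta\geq\beta_{\min}>0$ gives $\|u\|_{L^2(\Omega)}^2\leq\beta_{\min}^{-1}$, while $a(u,u)=\lambda_n$ with $\alpha\geq\alpha_{\min}>0$ gives $\|(\nabla+i\mathbf{k})u\|_{L^2(\Omega)}^2\leq\lambda_n/\alpha_{\min}$, hence $\|\nabla u\|_{L^2(\Omega)}\leq\sqrt{\lambda_n/\alpha_{\min}}+|\mathbf{k}|\,\beta_{\min}^{-1/2}$. Applying Cauchy--Schwarz to $m_{\alpha i}$ and the trivial estimate to $m_{\alpha}$, and using that $\lambda_n$ is continuous, hence bounded, on the compact convex set $\mathcal{B}$ together with the boundedness of $|\mathbf{k}|$ there, yields a constant $C_n$ with $|\nabla_{\mathbf{k}}\lambda_n(\mathbf{k})|\leq C_n$ for all $\mathbf{k}\in\mathcal{B}\backslash X_n$.

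Finally I would upgrade this a.e.\ gradient bound to a genuine Lipschitz estimate, which I expect to be the main obstacle: continuity plus a bounded a.e.\ derivative is not by itself enough, so the measure-zero singular set must be crossed with care. Since $\mathcal{B}$ is convex and $X_n$ lies in a finite union of analytic arcs, any two points of $\mathcal{B}$ can be joined by rectifiable paths in $\mathcal{B}\backslash X_n$ whose lengths approach the Euclidean distance $|\mathbf{k}-\mathbf{k}'|$; integrating the bounded gradient along such paths and passing to the limit gives $|\lambda_n(\mathbf{k})-\lambda_n(\mathbf{k}')|\leq C_n|\mathbf{k}-\mathbf{k}'|$. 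As a cleaner fallback that bypasses $X_n$ entirely, I would instead compare Rayleigh quotients through the min--max principle, noting that $a_{\mathbf{k}}(u,u)-a_{\mathbf{k}'}(u,u)$ from \eqref{both} is linear-plus-quadratic in $\mathbf{k}$ and is controlled by $|\mathbf{k}-\mathbf{k}'|$ using the same norm bounds. Either way $\lambda_n\in Lip(\mathcal{B})$, and $\omega_n=c\sqrt{\lambda_n}$ inherits Lipschitz continuity wherever $\lambda_n$ is bounded below, which covers everything except $\omega_1$ at the origin; there the vanishing of $\nabla_{\mathbf{k}}\lambda_1(\mathbf{0})$ (the $\mathbf{k}=\mathbf{0}$ ground state being constant) forces $\lambda_1(\mathbf{k})=O(|\mathbf{k}|^2)$, so $\sqrt{\lambda_1}$ remains Lipschitz at $\mathbf{0}$. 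With these pieces assembled, $\omega_n\in Lip(\mathcal{B})\cap\mathring{A}(\mathcal{B})$ follows.
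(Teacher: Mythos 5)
Your proposal is correct and its core coincides with the paper's argument: both derive the Hellmann--Feynman formula $\partial_{k_i}\lambda = m_{\alpha i}(u,u)+2k_i m_{\alpha}(u,u)$ by differentiating \eqref{simply} in $k_i$, testing with $v=u$, and using self-adjointness to eliminate the $\partial_{k_i}u$ terms, and both conclude Lipschitz continuity from the uniform boundedness of this expression. Where you genuinely diverge is at the two places the paper is silent or shaky, and in both cases your version is the more defensible one. First, the paper stops at ``$\partial_{k_i}\lambda$ is uniformly bounded'' and immediately concludes $\lambda_n\in Lip(\mathcal{B})$; you correctly flag that continuity plus a bounded gradient off the measure-zero set $X_n$ does not by itself yield a Lipschitz estimate, and you supply two repairs --- integration along nearby paths avoiding $X_n$ (which works because $X_n$ sits inside a one-dimensional variety, so almost every segment meets it in a negligible set), or a direct min--max comparison of the $\mathbf{k}$-dependent Rayleigh quotients of \eqref{both} --- the second being the cleanest self-contained route and one the paper does not take. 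Second, at the origin the paper asserts $\partial\omega_1/\partial k_i^{+}|_{\mathbf{k}=0}=\infty$ and then nonetheless concludes $\omega_1\in Lip(\mathcal{B})$, which is internally inconsistent; your observation that the $\mathbf{k}=\mathbf{0}$ ground state is constant, so $\nabla\lambda_1(\mathbf{0})=0$ and $\lambda_1(\mathbf{k})=O(|\mathbf{k}|^2)$, is the correct resolution. The only small gap on your side is that Lipschitz continuity of $\sqrt{\lambda_1}$ at $\mathbf{0}$ also needs the matching lower bound $\lambda_1(\mathbf{k})\gtrsim|\mathbf{k}|^2$ (positive definiteness of the effective quadratic form), since $|\sqrt{a}-\sqrt{b}|=|a-b|/(\sqrt{a}+\sqrt{b})$; for $n\geq 2$ the passage from $\lambda_n$ to $\omega_n=c\sqrt{\lambda_n}$ is harmless because $\lambda_n$ is bounded away from zero on the compact set $\mathcal{B}$.
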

\begin{proof}
On the one hand, let $\mathbf{k}\in \mathcal{B}\backslash X_n$, and suppose the multiplicity of the eigenvalue $\lambda_n(\mathbf{k})$ is $M\geq 1$ for some $n\in\mathbb{N}^{+}$. For simplicity, we drop $n$ and $\mathbf{k}$ in the proof. Theorem \ref{thm:eigenfun} guarantees the existence of a normalized eigenfunction $u(\mathbf{x};\mathbf{k})$ which is continuous in a neighborhood of $\mathbf{k}$. Taking the partial derivative with respect to $k_i$ for $i=1,2$ at $\mathbf{k}$ on both sides of \eqref{simply}, this leads to
\begin{equation*}\label{partial1}
    a(\frac{\partial u}{\partial k_i},v)-\lambda b(\frac{\partial u}{\partial k_i},v)=f^{(1)}(v;\mathbf{k},u,\frac{\partial \lambda}{\partial k_i}),
\end{equation*}
with
\begin{align*}
f^{(1)}(v;\mathbf{k},u,\frac{\partial \lambda}{\partial k_i}):=-m_{\alpha i}(u,v)-2k_im_{\alpha}(u,v)+\frac{\partial \lambda}{\partial k_i}b(u,v).
\end{align*}
Here, the bilinear forms $m_{\alpha i}(\cdot,\cdot)$ and $m_{\alpha}(\cdot,\cdot)$ are defined in \eqref{eq:m-def}.

Note that the operator in the equation above $(\mathcal{L}_k-\lambda \mathbf{I})$ has the eigenspace $E(\lambda)$ as its kernel. 
Let $\{u_p\}_{p=1}^M$ be a set of basis in $E(\lambda)$. 
As a consequence of the Fredholm–Riesz–Schauder theory \cite{sauter2010boundary}, adding additional orthogonality with $E(\lambda)$ leads to the well-posedness of the following problem: seeking $\partial_{k_i}u\in H^1_{\pi}(\Omega)$, s.t., 
\begin{equation}\label{one}
    \left\{
\begin{aligned}
 a(\partial_{k_i}u,v)-\lambda b(\partial_{k_i}u,v)&=f^{(1)}(v;\mathbf{k},u,\frac{\partial \lambda}{\partial k_i})&&\text{ for all }v \in H^1_{\pi}(\Omega),\\
 b(\partial_{k_i}u,u_p)&=0&&\text{ for }p=1,\cdots ,M. 
\end{aligned}
\right.
\end{equation}
Let the test function $v:=u$, we obtain  
\begin{equation}\label{self adjoint}
    a(\partial_{k_i}u,u)-\lambda b(\partial_{k_i}u,u)=\overline{a(u,\partial_{k_i}u)-\lambda b(u,\partial_{k_i}u)}=f^{(1)}(u;\mathbf{k},u,\frac{\partial \lambda}{\partial k_i})=0.
\end{equation}
Therefore, $\frac{\partial \lambda}{\partial k_i}$ has to be the solution to the following problem, 
\begin{align*}
f^{(1)}(u;\mathbf{k},u,\frac{\partial \lambda}{\partial k_i})=0. 
\end{align*}
This results in
\begin{equation}\label{derivative}
  \frac{\partial \lambda}{\partial k_i}=2k_i m_\alpha(u,u)+m_{\alpha i}(u,u).
\end{equation}

Consequently, $\frac{\partial \lambda}{\partial k_i}$ is uniformly bounded. 

On the other hand, let $\mathbf{k}_0\in X_n$ and suppose the multiplicity of the eigenvalue $\lambda_n(\mathbf{k_0})$ is $M\geq 1$ for some $n\in\mathbb{N}^{+}$ and let its associated eigenfunctions be $\{u_q(\mathbf{x};\mathbf{k}_0)\}_{q=1}^M$. 
Then a combination of \eqref{derivative} and Theorem \ref{thm:eigenfun} leads to the left and right partial derivatives,  
\begin{equation*}
\begin{aligned}
    \left.\frac{\partial \lambda}{\partial k_i{^{\pm}}}\right|_{\mathbf{k}=\mathbf{k}_0}&=\lim_{\delta\to 0^{\pm}}\left.\left(2k_i m_\alpha(u,u)+m_{\alpha i}(u,u)\right)\right|_{\mathbf{k}=\mathbf{k}_0+\delta e_i}\\
    &=2k_i \sum_{q=1}^M\sum_{p=1}^M c^q_{i\pm}c^p_{i\pm}\left(m_\alpha( u_q(\mathbf{x};\mathbf{k}), u_p(\mathbf{x};\mathbf{k}))+m_{\alpha i}( u_q(\mathbf{x};\mathbf{k}),u_p(\mathbf{x};\mathbf{k}))\right). 
\end{aligned}
\end{equation*}
Here, the complex values $\{c_{i\pm}^q\}_{q=1}^M\subset\mathbb{C}$ satisfy Theorem \ref{thm:eigenfun} for $i=1,2$. Consequently, $\left.\frac{\partial \lambda}{\partial k_i{^{\pm}}}\right|_{\mathbf{k}=\mathbf{k}_0}$ are bounded but may take different values.  

Finally, the first partial derivative of each positive band function at $\mathbf{k}\in \mathcal{B}\backslash X_n$ can be easily derived from the relation $\lambda(\mathbf{k})=\left(\frac{\omega(\mathbf{k})}{c}\right)^2$. Since the first band function vanishes at $\mathbf{k}=0$, $\left.\frac{\partial \omega_1}{\partial k_i{^{+}}}\right|_{\mathbf{k}=0}=\infty$, which is unbounded. Consequently, we proved $\omega_n(\mathbf{k})\in Lip(\mathcal{B})\cap \mathring{A}(\mathcal{B})$ for all $n\in\mathbb{N}^+$, and this completes our proof.      
\end{proof}
\section{Numerical schemes}\label{sec:numerical-scheme}
As shown in the previous section, the band functions of 2D PhCs are real-valued, non-negative, continuous, and piecewise analytic in $\mathcal{B}$. Besides, there is a triangular area within the Brillouin zone $\mathcal{B}$, which is the so-called irreducible Brillouin zone (IBZ) $\mathcal{B}_{\operatorname{red}}$, such that any point in $\mathcal{B}$ can be transformed into $\mathcal{B}_{\operatorname{red}}$ by mirror symmetry or rotational symmetry. This implies that band functions at these points take the same value as the corresponding points in $\mathcal{B}_{\operatorname{red}}$ \cite{joannopoulos2008molding}. Consequently, one only needs to approximate band functions in $\mathcal{B}_{\operatorname{red}}$. Alternatively, one can approximate band functions in the quadrilateral domain $\Tilde{\mathcal{B}}_{\operatorname{red}}$, which is composed of $\mathcal{B}_{\operatorname{red}}$ and its mirror symmetric region along one of its edges.

Based upon the properties of band functions we derived, we exploit in this work the band function reconstruction using Lagrange interpolation. In specific, given a set of $N$ distinct sampling points $\{\mathbf{k}_i\}_{i=1}^N\subset\mathcal{C}$ for $\mathcal{C}:=\mathcal{B}_{\operatorname{red}}$ or $\mathcal{C}:=\Tilde{\mathcal{B}}_{\operatorname{red}}$ and the corresponding band function values $\{\omega(\mathbf{k}_i)\}_{i=1}^N$ for a certain band number, the corresponding Lagrange interpolation $\mathbf{L}\omega$ is a linear combination of the Lagrange polynomials $\{l_i(\mathbf{k})\}_{i=1}^N$ for those sampling points satisfying $l_i(\mathbf{k_j})=\delta_{ij}$ such that it interpolates the data, i.e., 
\begin{align}\label{eq:interpolation}
 \mathbf{L}\omega(\mathbf{k})=\sum_{i=1}^N\omega(\mathbf{k}_i)l_i(\mathbf{k}).
 \end{align}
Here, $\{\omega(\mathbf{k}_i)\}_{i=1}^N$ are derived by solving the eigenvalue problem \eqref{both} numerically. We refer to \cite[Section 2]{canuto2007spectral} for more details on Lagrange interpolation.

 Theoretically, by computing the so-called Lebesgue constant
\begin{equation}\label{optimal}
   \Gamma_{N}(\{\mathbf{k}_i\}_{i=1}^N):=\max_{\mathbf{k}\in \mathcal{C}}\sum_{i=1}^N|l_i(\mathbf{k})|, 
\end{equation}
one can obtain a measure of how close the Lagrange approximation is to the best polynomial approximation by means of \cite[Theorem 15.1]{trefethen2019approximation}
\begin{equation*}
    \sup_{\mathbf{k}\in  \mathcal{C}}|\omega(\mathbf{k})-\mathbf{L}\omega(\mathbf{k})|\leq \left(1+\Gamma_{N}\left(\{\mathbf{k}_i\}_{i=1}^N\right)\right)\inf_{p_n\in P_n(\mathcal{C})}\sup_{\mathbf{k}\in \mathcal{C}}|\omega(\mathbf{k})-p_n(\mathbf{k})|.
\end{equation*}
Here, $P_n(\mathcal{C})$ is the polynomial space with degree at most $n$ and $N:=\dim(P_n(\mathcal{C}))$, where $N=(n+1)^2$ if $\mathcal{C}=\Tilde{\mathcal{B}}_{red}$ and $N=\frac{1}{2}(n+1)(n+2)$ if $\mathcal{C}=\mathcal{B}_{red}$.

Note that the value of the Lebesgue constant $\Gamma_{N}(\{\mathbf{k}_i\}_{i=1}^N)$ depends solely on the nodal set $\{\mathbf{k}_i\}_{i=1}^N$. However, minimizing \eqref{optimal} to obtain the optimal nodal set is computationally challenging for any type of domain $\mathcal{C}\subset\mathbb{R}^{d}$ with $d>1$. This is because the denominator of the Lagrange polynomials vanishes on a subset of $\mathcal{C}$ that the Lebesgue constant $\Gamma_{N}(\{\mathbf{k}_i\}_{i=1}^N)$, as a function of the nodal points $\{\mathbf{k}_i\}_{i=1}^N$, is not continuous in the $2N$-dimensional bounded domain $\mathcal{C}^N$. Besides, $\Gamma_{N}(\{\mathbf{k}_i\}_{i=1}^N)$ is very sensitive to the location of the interpolation points, which makes the minimization procedure subtle. There have been several attempts to generate the optimal nodal sets in the sense that they minimize the Lebesgue constant. For example, Heinrichs directly minimized the Lebesgue constant in a triangle with Fekete points as the initial guess \cite{heinrichs2005improved}. 
Babu{\v{s}}ka minimized the norm of the Lagrange interpolation operator, which yields small Lebesgue constant in a triangle \cite{chen1995approximate}. 
Sommariva provided the approximate Fekete and approximated optimal nodal points in both the square and the triangle by solving the corresponding optimization problems numerically \cite{briani2012computing}. However, how to find the optimal points is still an open question. 

Since in the multivariate case, the optimal Lagrange interpolant is hard to determine, our aim is to find a suitable sampling point set, on which our Lagrange interpolation performs well when dealing with the band function reconstruction. 

\subsection{Sampling points in triangle}\label{subsec:sampling points-t}
To standardize the problem, 
let the isosceles right triangle $T$ be the reference triangle, 
\[
T:=\{\mathbf{x}=(x,y): 0\leq x \leq 1, 0 \leq y \leq 1-x\}, 
\]
and let $P_n(T)$ be the space of polynomials on $T$ with degree at most $n$,
\begin{equation*}
    P_n(T)=\text{span}\{x^iy^j,\quad i+j\leq n\}.
\end{equation*}

In the following, we introduce several sampling methods on this reference triangle $T$.

\subsubsection*{Mean optimal points} 
The mean optimal nodal set is defined by minimizing a norm related to the Lagrange interpolation operator $\mathbf{L}$, which takes the form \cite{chen1995approximate}
\begin{align}\label{eq:l2}
\Vert \mathbf{L}\Vert _{2}:=\int _T\sum_{i=1}^N|l_i(\mathbf{x})|^2 \mathrm{d}\mathbf{x}. 
\end{align}
Here, $\{l_i(\mathbf{x})\}_{i=1}^N$ is defined in the same way as in \eqref{eq:interpolation}. Compared with \eqref{optimal}, the minimization of \eqref{eq:l2} involves less computational complexity and is thus more favorable. Indeed, suppose $\{p_i(\mathbf{x})\}_{i=1}^N$ is a set of standard orthogonal polynomials on the triangle $T$ with degree at most $n$, i.e., $\int _T p_i(\mathbf{x})p_j(\mathbf{x}) \mathrm{d}\mathbf{x}=\delta_{ij}
$ for all $i,j=1,\cdots,N$, then for any given point set $\{\mathbf{x}_j\}_{j=1}^N$, the Lagrange polynomials, if they exist, can be expressed as
$l_k(\mathbf{x})=\sum_{i=1}^Na_{ki}p_i(\mathbf{x})
$ with some constants $\{a_{ki}\}_{k,i=1}^N$, which allows \eqref{eq:l2} to be expressed as $\Vert \mathbf{L}\Vert _{2}=\sum_{k=1}^N \sum_{i=1}^N |a_{ki}|^2$. In comparison, the calculation of $\Gamma_{N}(\{\mathbf{x}_i\}_{i=1}^N)$ is equivalent to the calculation of $\max_{\mathbf{x}\in T}\sum_{k=1}^N |\sum_{i=1}^N |a_{ki}p_i(\mathbf{x})|$, which has more computational complexity. Numerical results show that $\Vert \mathbf{L}\Vert _{2}/ \Gamma_{N}\left(\{\mathbf{x}_i\}_{i=1}^N\right)$ is not large and the performance of this kind of point set is nearly optimal. 
\subsubsection*{Fekete points}
Fekete point set \cite{bos1991certain} is another type of nearly optimal nodal point set that maximizes the determinant of the generalized Vandermonde matrix $V(\mathbf{x}_1,\mathbf{x}_2,\cdots,\mathbf{x}_N)$, 
\begin{equation}\label{Vandermonde matrix}
 \begin{aligned}
\max_{\{\mathbf{x}_1,\mathbf{x}_2,\cdots,\mathbf{x}_N\}\subset T}\det(V(\mathbf{x}_1,\mathbf{x}_2,\cdots,\mathbf{x}_N)). 
\end{aligned}  
\end{equation}
Here, $V(\mathbf{x}_1,\mathbf{x}_2,\cdots,\mathbf{x}_N)$ is of size $N\times N$ with entries
\begin{align*}
V_{ij}=g_{j}(\mathbf{x}_i) \text{ for } i,j=1,\cdots,N.
\end{align*}
$\{g_i\}_{i=1}^N$ denotes a set of basis functions in $P_n(T)$. Note that $\det(V)$ can be regarded as a polynomial function of $(\mathbf{x}_1,\cdots,\mathbf{x}_n)$, which implies the existence of Fekete points for a given compact set $T$. Note also that Problem \eqref{Vandermonde matrix} involves less computational complexity than the minimization of \eqref{optimal}. As the first attempt, Bos \cite{bos1991certain} constructed the Fekete point set up to the 7th order, which was further extended up to degree 13 \cite{chen1995approximate} and 18 \cite{taylor2000algorithm} in a triangle, respectively.

\subsubsection*{Improved Lobatto grid}

The improved Lobatto grid
is proposed in \cite{blyth2006lobatto} as an improvement of the original Lobatto grid, which is composed of $(\xi_i,\eta_j)$ defined by 
\begin{align*}
\xi_i=\frac{1}{3}(1+2v_j-v_i-v_k),
\eta_j=\frac{1}{3}(1+2v_i-v_j-v_k) \text{ for }i=1,\cdots,n+1\text{ and }j=1,\cdots,n+2-i. 
\end{align*}
Here, $k:=n+3-i-j$ and $v_i:=\frac{1}{2}(1+t_{i})$ with $t_i$ denoting the zeros of the $n$th degree Labatto polynomials. 

This proposed
point set utilizes the zeros of Lobatto polynomials which are close to optimal for 1D interpolation \cite{fejer1932lagrangesche}. It is generated by deploying Lobatto interpolation nodes along the three edges of the triangle, and then
computing interior nodes by averaged intersections to achieve three-fold rotational symmetry. The symmetry of the distribution with respect to the three vertices is a significant improvement over the original Lobatto grid. Its straightforward implementation makes it an attractive choice, and numerical results show that the Lebesgue constant for this point set is competitive with the above-mentioned two point sets.

The following figures show the comparison of mean optimal points, Fekete points, and the improved Lobatto grid for $n=4,8$.
\begin{remark}[Lebesgue constants for mean optimal points, Fekete points and improved Lobatto grid]
Although there is no rigorous proof on the boundedness of the Lebesgue constants for mean optimal points, Fekete points and improved Lobatto grid, numerical evidence \cite{chen1995approximate,taylor2000algorithm,blyth2006lobatto} suggests that their Lebesgue constants are proportional to $\sqrt{N}$.
\end{remark}
\begin{figure}[H] 
    \centering  
\subfigure[$n=4$]{\label{Comparison1}
\includegraphics[width=.35\textwidth,trim={10cm 0cm 10cm 0cm},clip]{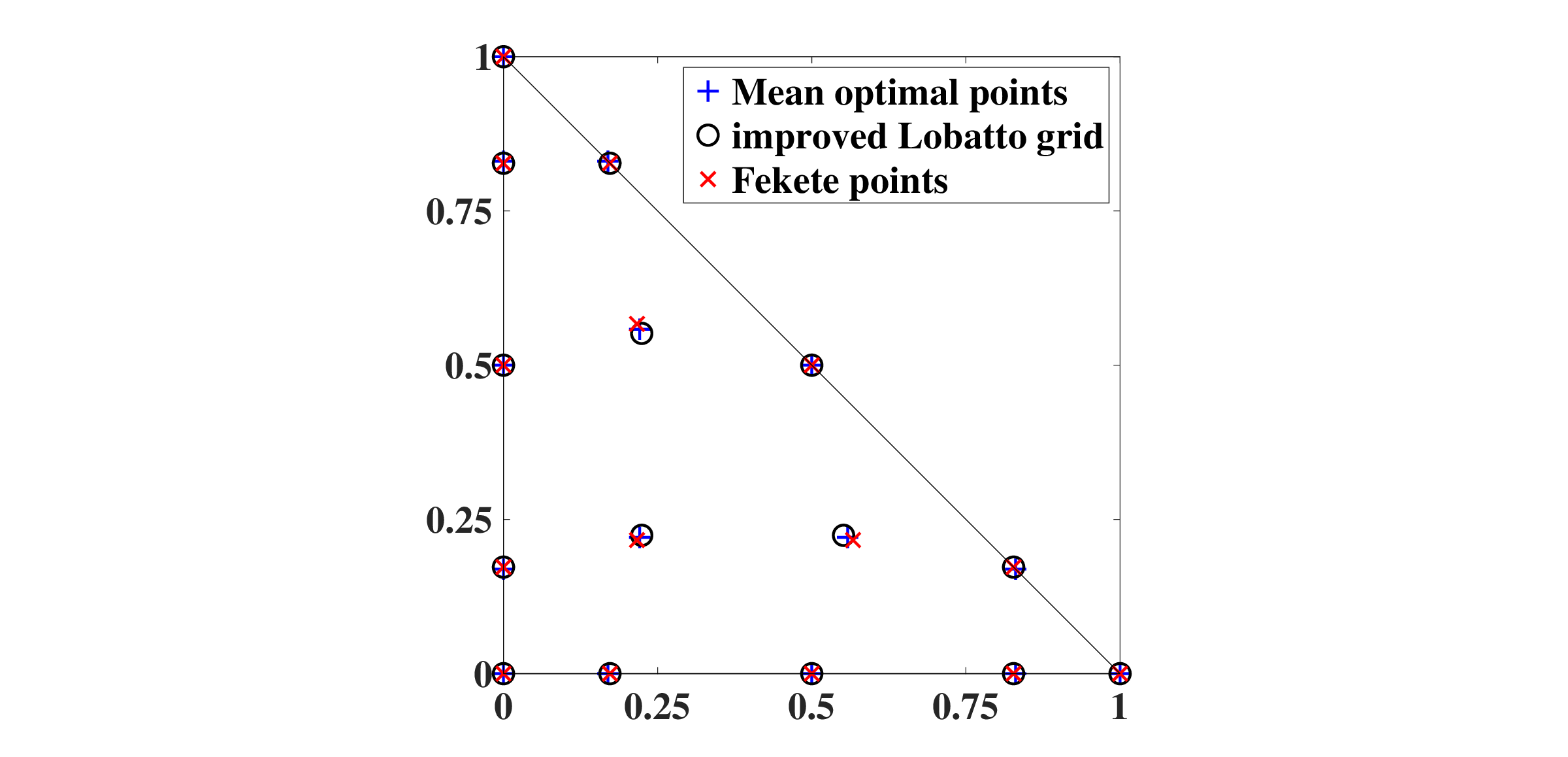}}
\subfigure[$n=8$]{\label{Comparison2}
\includegraphics[width=.35\textwidth,trim={10cm 0cm 10cm 0cm},clip]{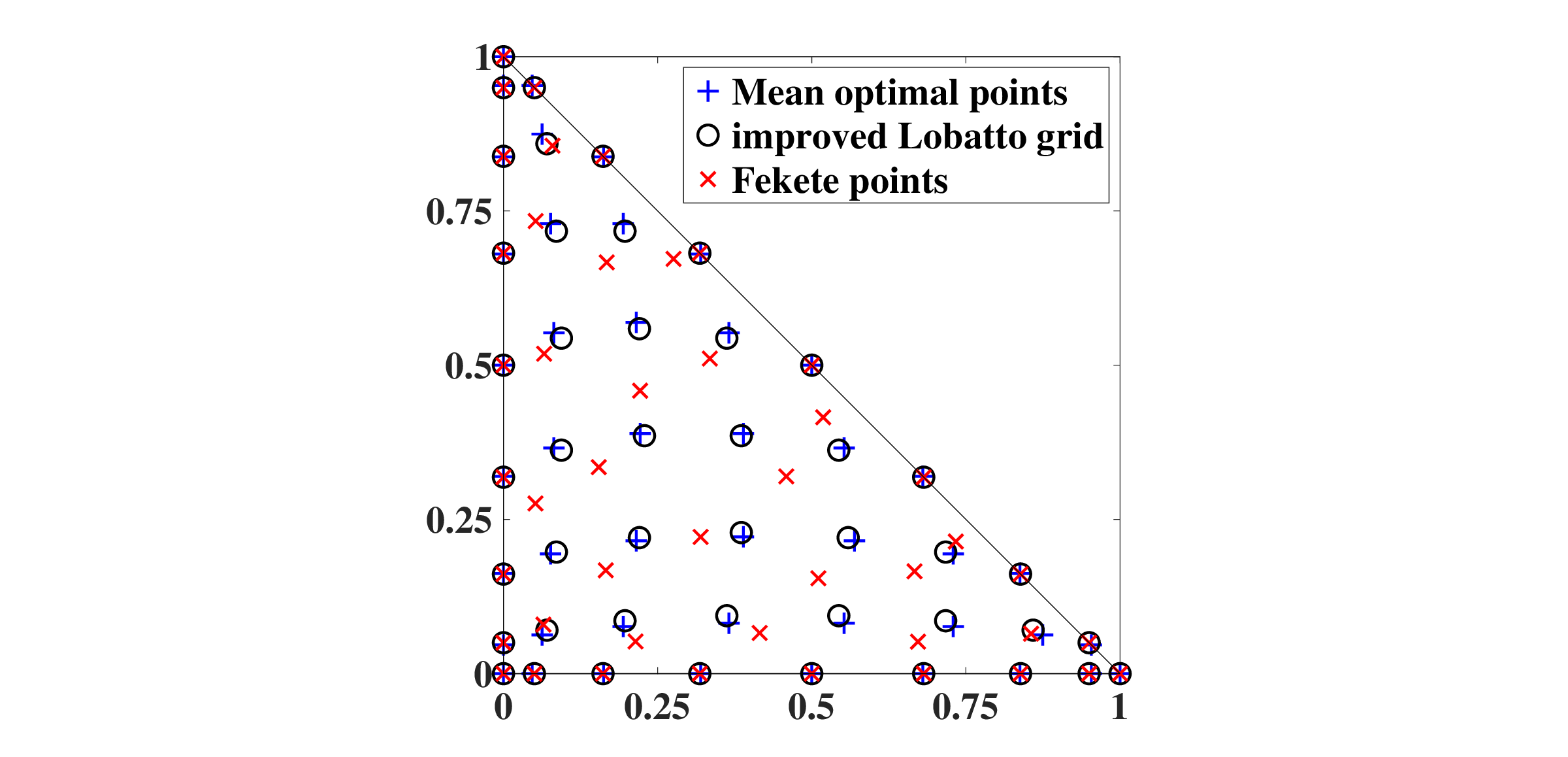}}
\caption{Mean optimal points, Fekete points, and improved Lobatto grid for $n=4,8$.}\label{Comparison}
    \end{figure} 
\subsection{Sampling points in quadrilateral}\label{subsec:sampling points-r}
For a quadrilateral, we will first project it into the unit square $S:=[-1,1]^2$ by projective mapping \cite{heckbert1999projective} and then consider the polynomial space with two variables and degree at most $n$ in each variable, i.e.,
\begin{equation*}
   P_n(S)=\text{span}\{x^iy^j,\quad i,j\leq n\}.  
\end{equation*}
\begin{figure}[H]
\centering
\subfigure[$n=4$]{\label{cheb1_25}
\includegraphics[width=.35\textwidth,trim={1cm 0.2cm 1cm 0.2cm},clip]{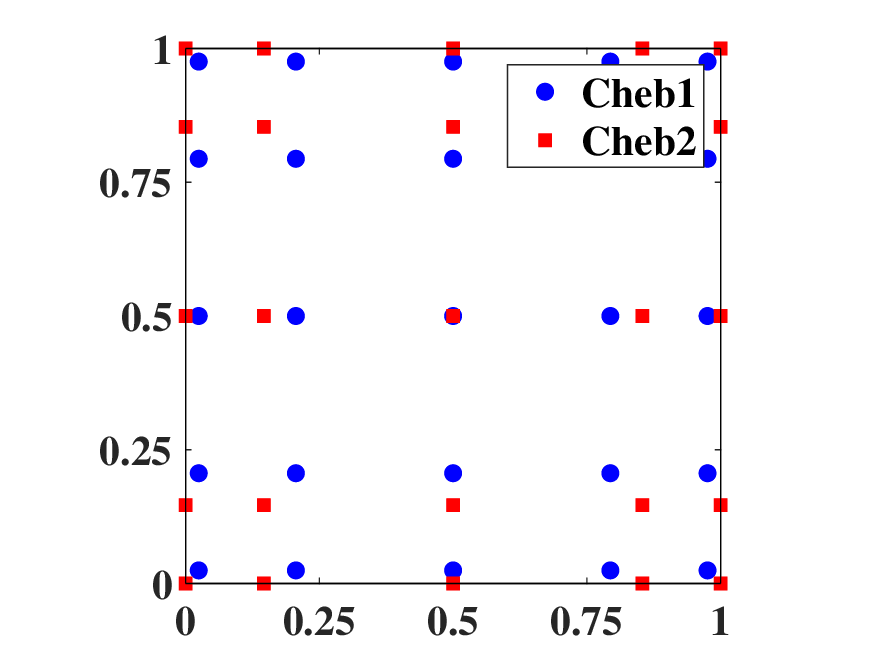}
}%
\quad
\subfigure[$n=8$]{\label{cheb1_81}
\includegraphics[width=.35\textwidth,trim={1cm 0.2cm 1cm 0.2cm},clip]{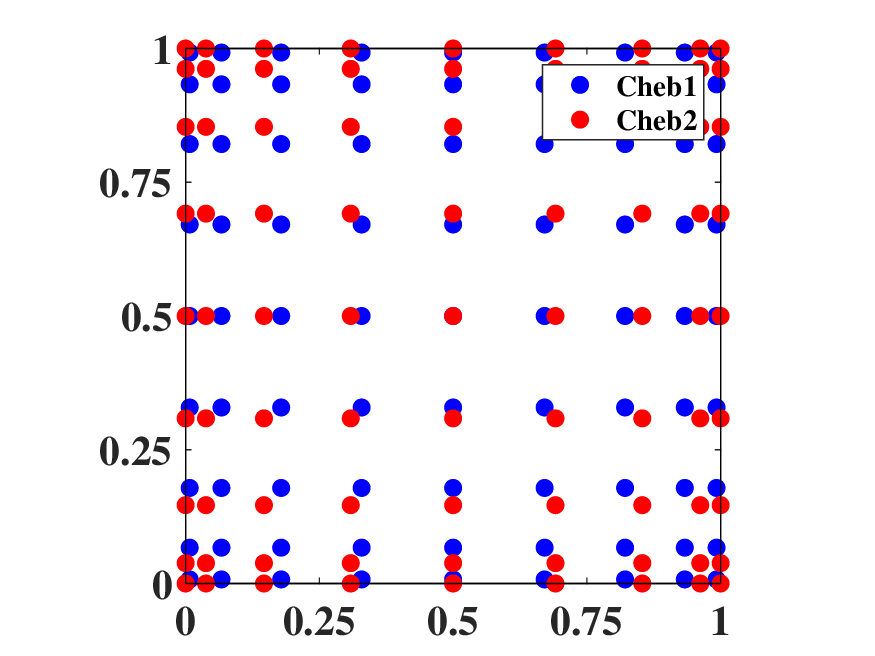}
}%
\centering
\caption{Cheb1 and Cheb2 with tensor product for $n=4,8$.}
\label{chebyyy}
\end{figure} 
It is well known that in the one-dimensional case, interpolation using the zeros of Chebyshev polynomials is close to optimal. So in the case of unit square, we mainly consider the following two sampling point sets with tensor product:

The Chebyshev points of the first kind (Cheb1) in the interval $[-1,1]$ are the zeros of the Chebyshev polynomial of the first kind $T_{n+1}(x)$, 
 \begin{align*}
x_k=\cos{\frac{2k+1}{2(n+1)}\pi},\quad k=0,\cdots,n.
 \end{align*}

The Chebyshev points of the second kind (Cheb2) in the interval $[-1,1]$ are the zeros of the Chebyshev polynomial of the second kind $U_{n-1}(x)$ times $(x^2-1)$, i.e.,  \begin{align*}
x_i=\cos{\left(\frac{i}{n}\pi\right)}, \quad i=0,\cdots,n. \end{align*}

Figure \ref{chebyyy} demonstrates the comparison of Cheb1 and Cheb2 for $n=4$ and $n=8$ after using tensor product to expand them to the unit square.
\begin{remark}[Lebesgue constants for $\operatorname{Cheb1}$ and $\operatorname{Cheb2}$]
Since the Lebesgue constants of $\operatorname{Cheb1}$ and $\operatorname{Cheb2}$ are both proportional to $\log(N)$ \cite{ibrahimoglu2016lebesgue}, where $N=n+1$ is the dimension of the polynomial space with polynomial degrees at most $n$ defined in $[-1,1]$, their Lebesgue constants on $[-1,1]^2$ are proportional to $(\log(N))^2$, where $N=(n+1)^2$ denotes the dimension of the polynomial space with polynomial degrees at most $n$ in each variable.
\end{remark}
\begin{figure}[H] 
    \centering  
\subfigure[Square lattice \#DOFs=2107]{\label{mesh1}
\includegraphics[width=.4\textwidth]{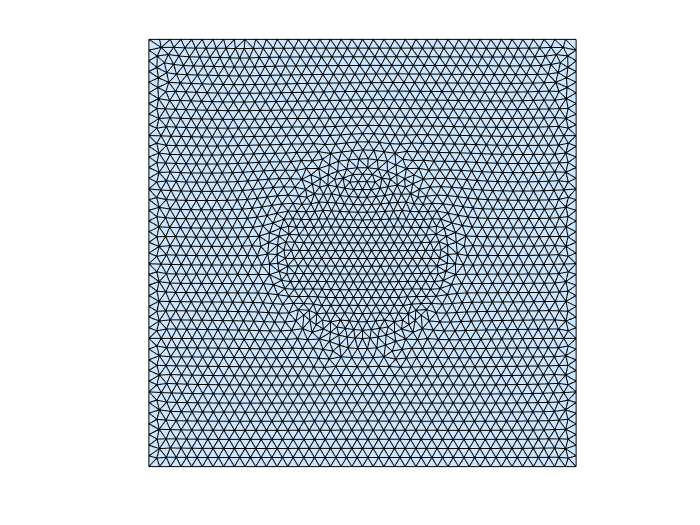}}\qquad
\subfigure[Hexagonal lattice \#DOFs=1781]{\label{mesh2}\includegraphics[width=.4\textwidth]{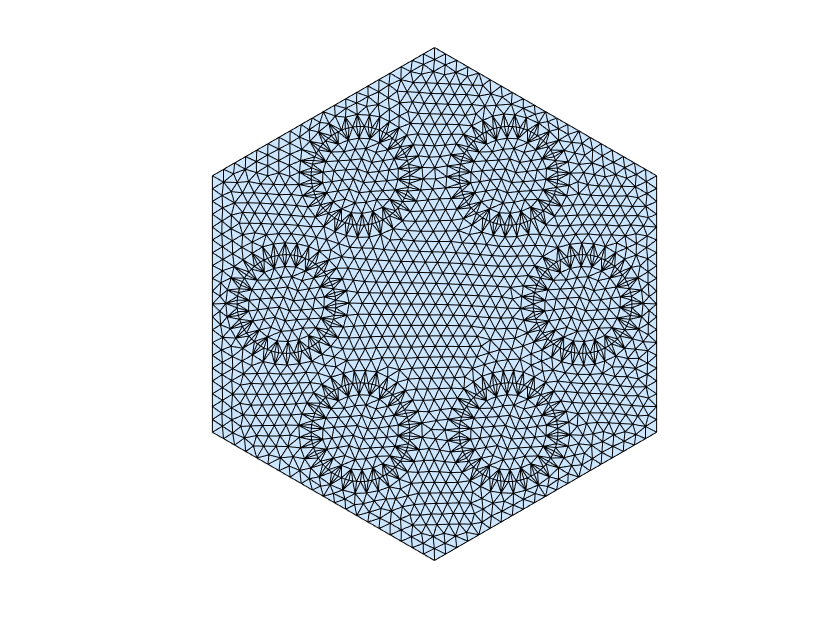}}
\caption{Discretization of the unit cell $\Omega$ by \textit{distmesh2d}.}
    \end{figure} 
\vspace{-1.5cm}
\begin{figure}[H] 
    \centering  
\subfigure[$253$ evenly distributed points in the IBZ of the square lattice.]{\label{point_in_B 1}
\includegraphics[width=0.28\textwidth,trim={1cm 0.2cm 1cm 0.5cm},clip]{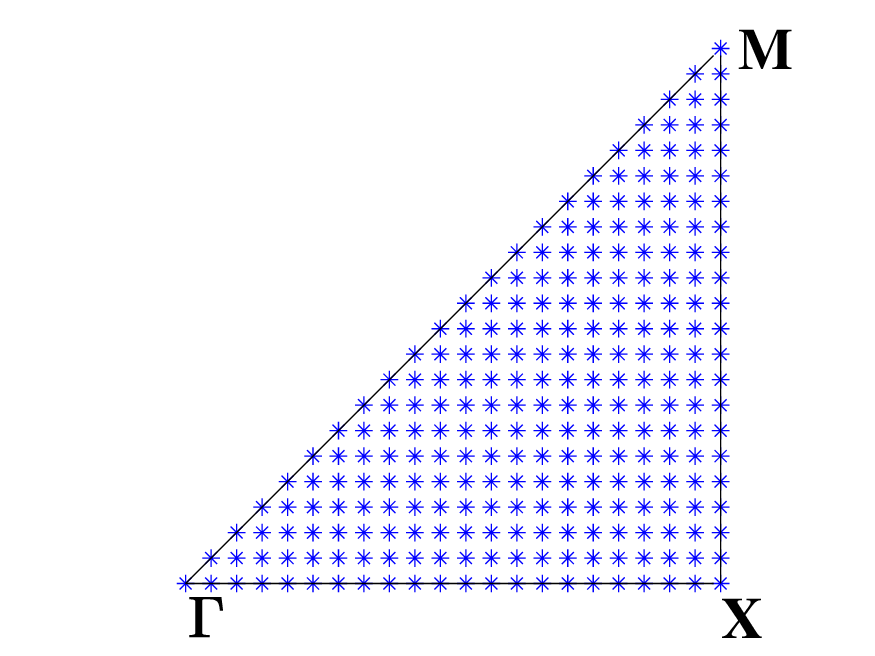}}
\subfigure[$253$ evenly distributed points in the IBZ of the hexagonal lattice.]{\label{point_in_B 2}\includegraphics[width=0.35\textwidth,trim={1cm 0.2cm 1cm 0.5cm},clip]{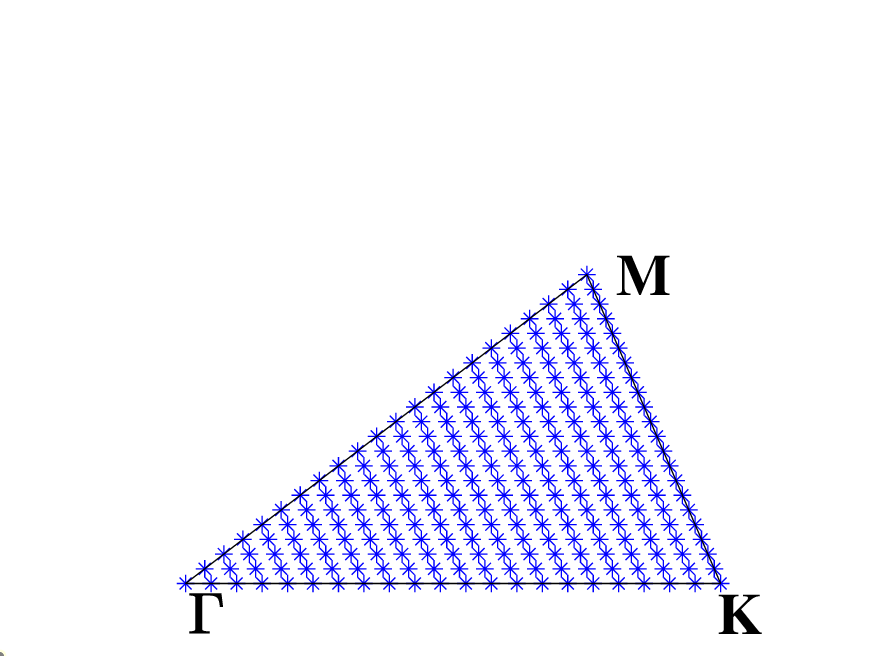}}
\caption{$\hat{\mathcal{B}}$.}
\label{fig:ref-pts}
\end{figure}
\begin{remark}[Computational complexity]
It is worth noticing that the computational complexity for \eqref{eq:interpolation} is consistent corresponding to all these five sampling methods, even though the number of Chebyshev points is almost twice as the number of those three kinds of sampling points within the triangular area, due to the mirror and rotational symmetry mentioned earlier. For example, in Figure \ref{cheb1_25}, we have 25 Chebyshev points, but we only need to compute eigenvalues at 15 of them within the isosceles right triangle.
\end{remark}

\begin{remark}

As depicted in Figure \ref{chebyyy}, $\operatorname{Cheb2}$ has nodes on the edges, whereas $\operatorname{Cheb1}$ does not. Furthermore, it is empirically known that the grid points over the edges are more important than the interior grid points for the band function reconstruction \cite{hinuma2017band}. Consequently, we anticipate that the performance of $\operatorname{Cheb2}$ will surpass that of $\operatorname{Cheb1}$, which is confirmed in Section \ref{sec:experiment}. 
\end{remark}
\section{Numerical experiments}\label{sec:experiment}
To demonstrate the performance of our proposed method \eqref{eq:interpolation} together with the sampling methods presented in Sections \ref{subsec:sampling points-t} and \ref{subsec:sampling points-r}, 
we mainly utilize a square lattice (Figure \ref{lattice}) and a hexagonal lattice (Figure \ref{lattice3}) as the test beds.
\begin{figure}[H]
\centering
\subfigure[TE mode]{\label{zhexian2}
\includegraphics[width=.35\textwidth,trim={11cm 0cm 12.5cm 1cm},clip]{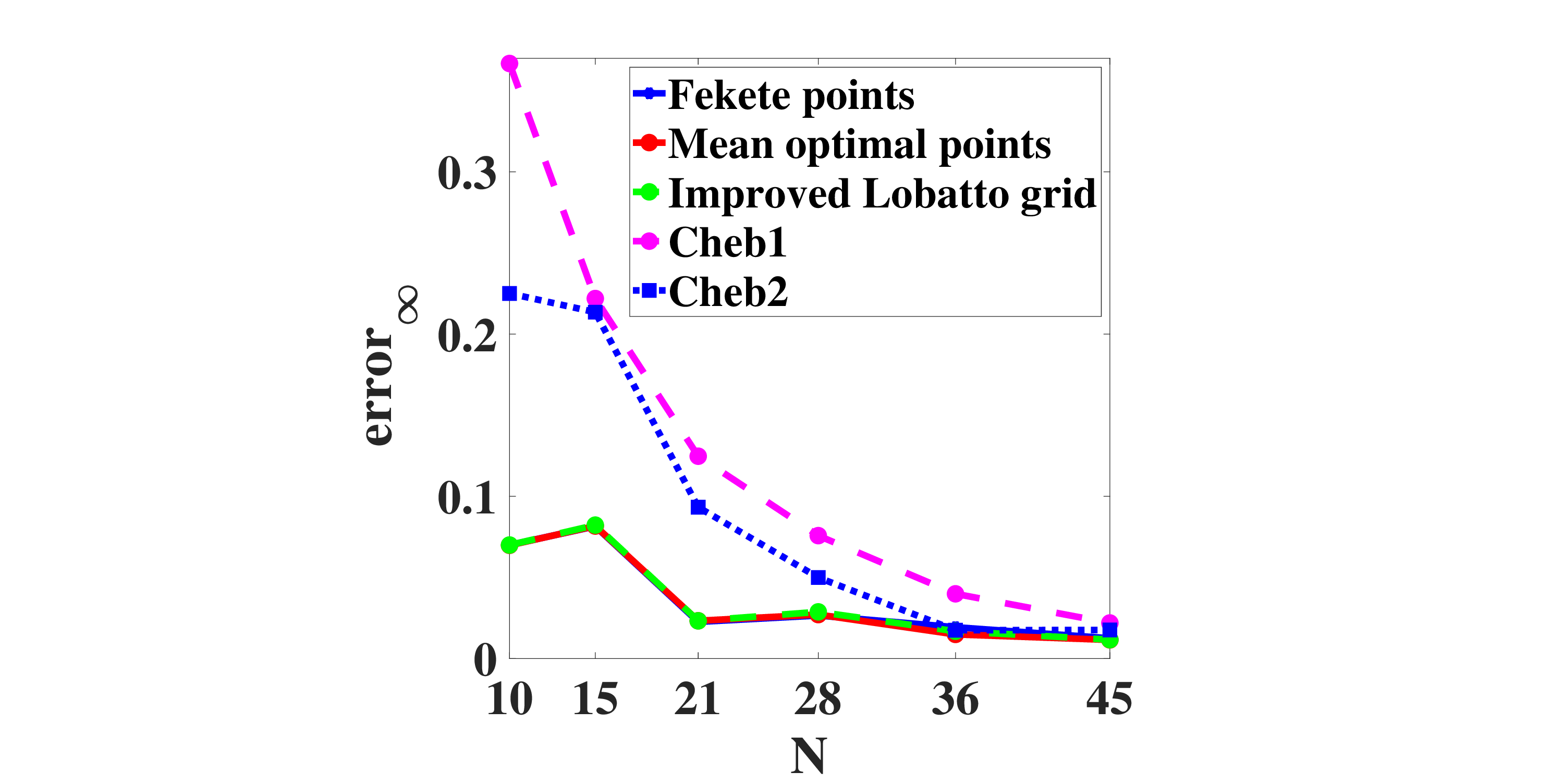}
}%
\quad
\subfigure[TM mode]{\label{zhexian1}
\includegraphics[width=.35\textwidth,trim={11cm 0cm 12.5cm 0cm},clip]{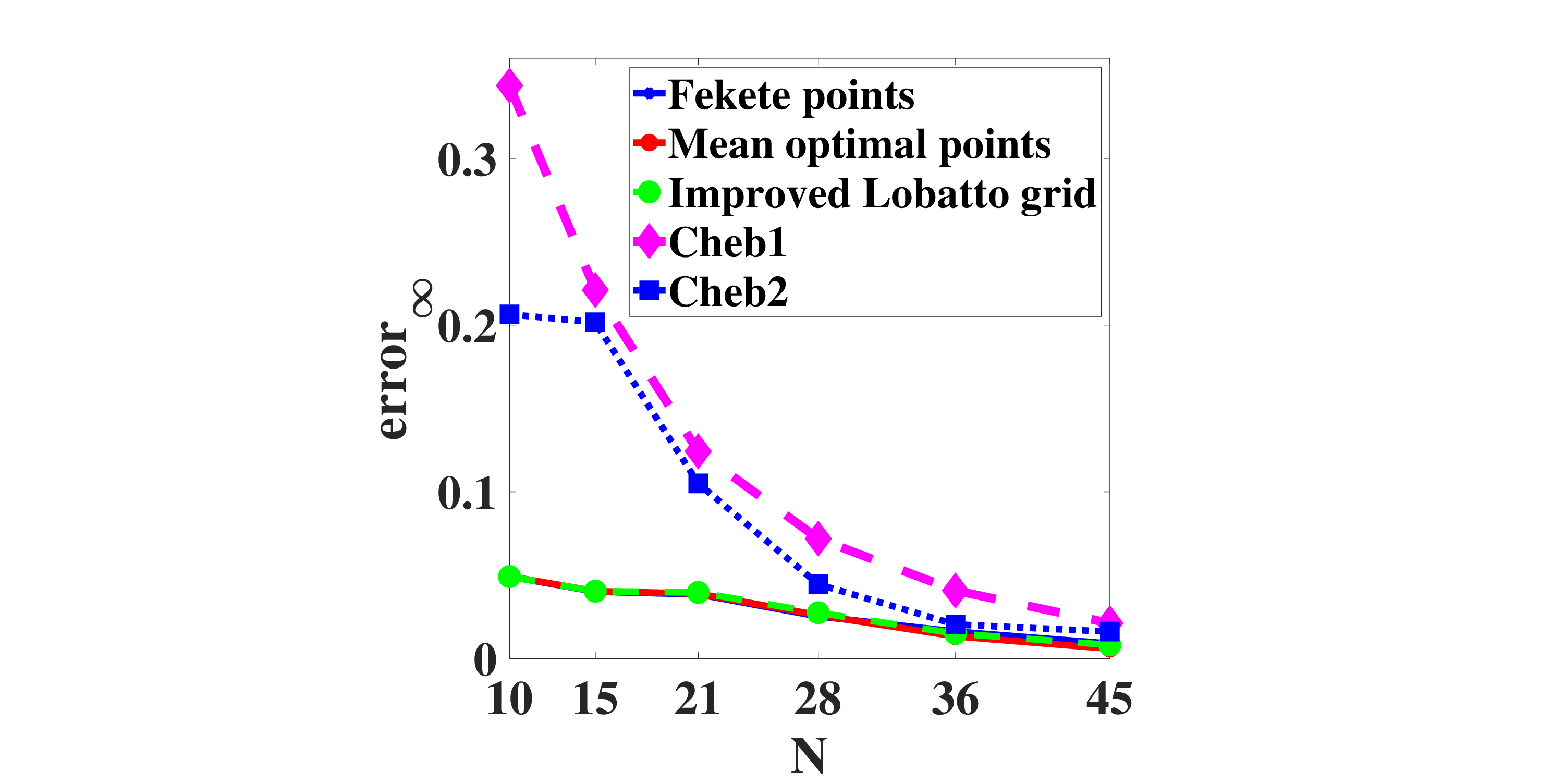}
}%
 \\          
\subfigure[TE mode]{\label{zhexian4}
\includegraphics[width=.35\textwidth,trim={11cm 0cm 12.5cm 0cm},clip]{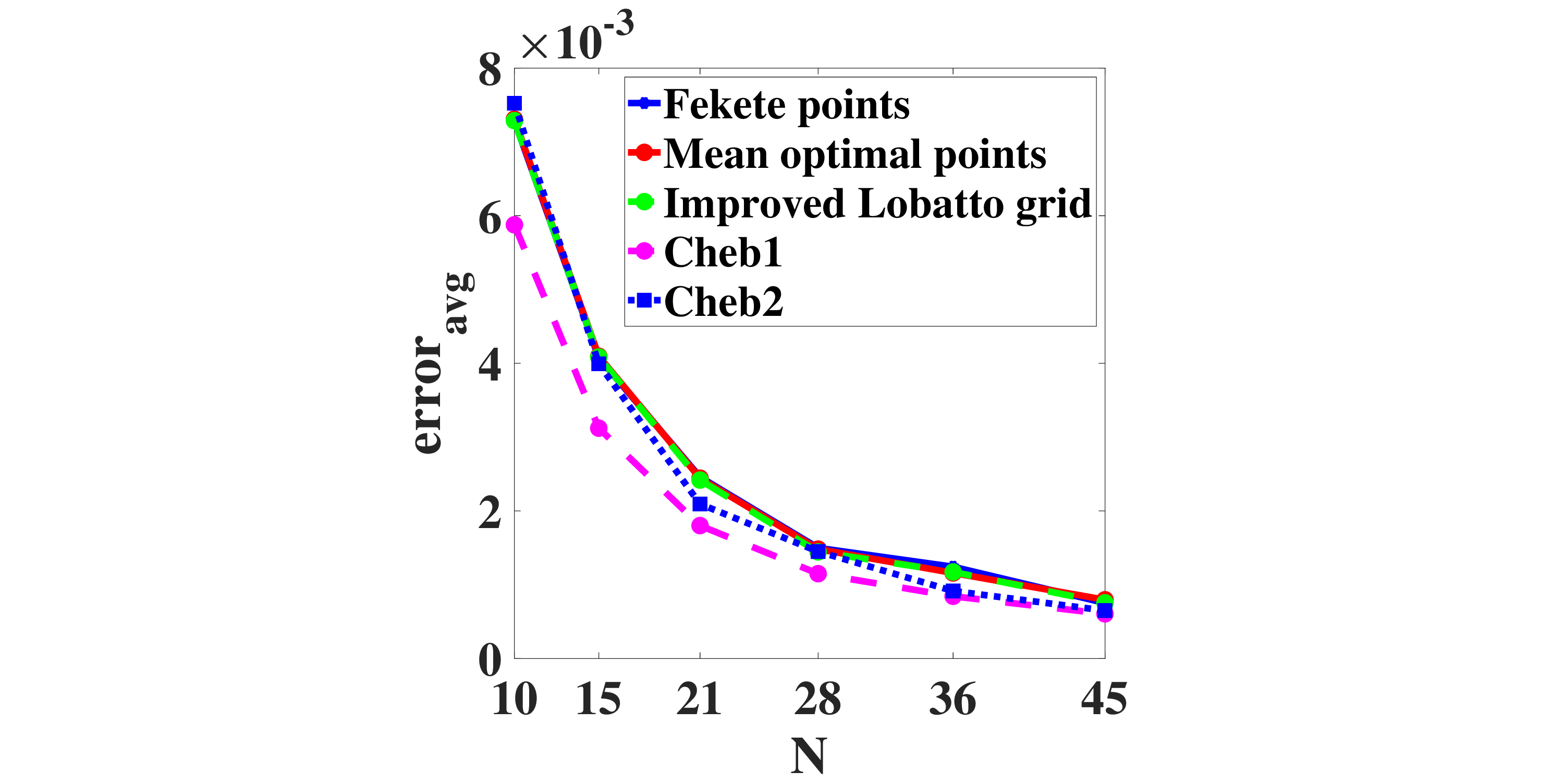}
}%
\quad
\subfigure[TM mode]{\label{zhexian3}
\includegraphics[width=.35\textwidth,trim={11cm 0cm 12.5cm 0cm},clip]{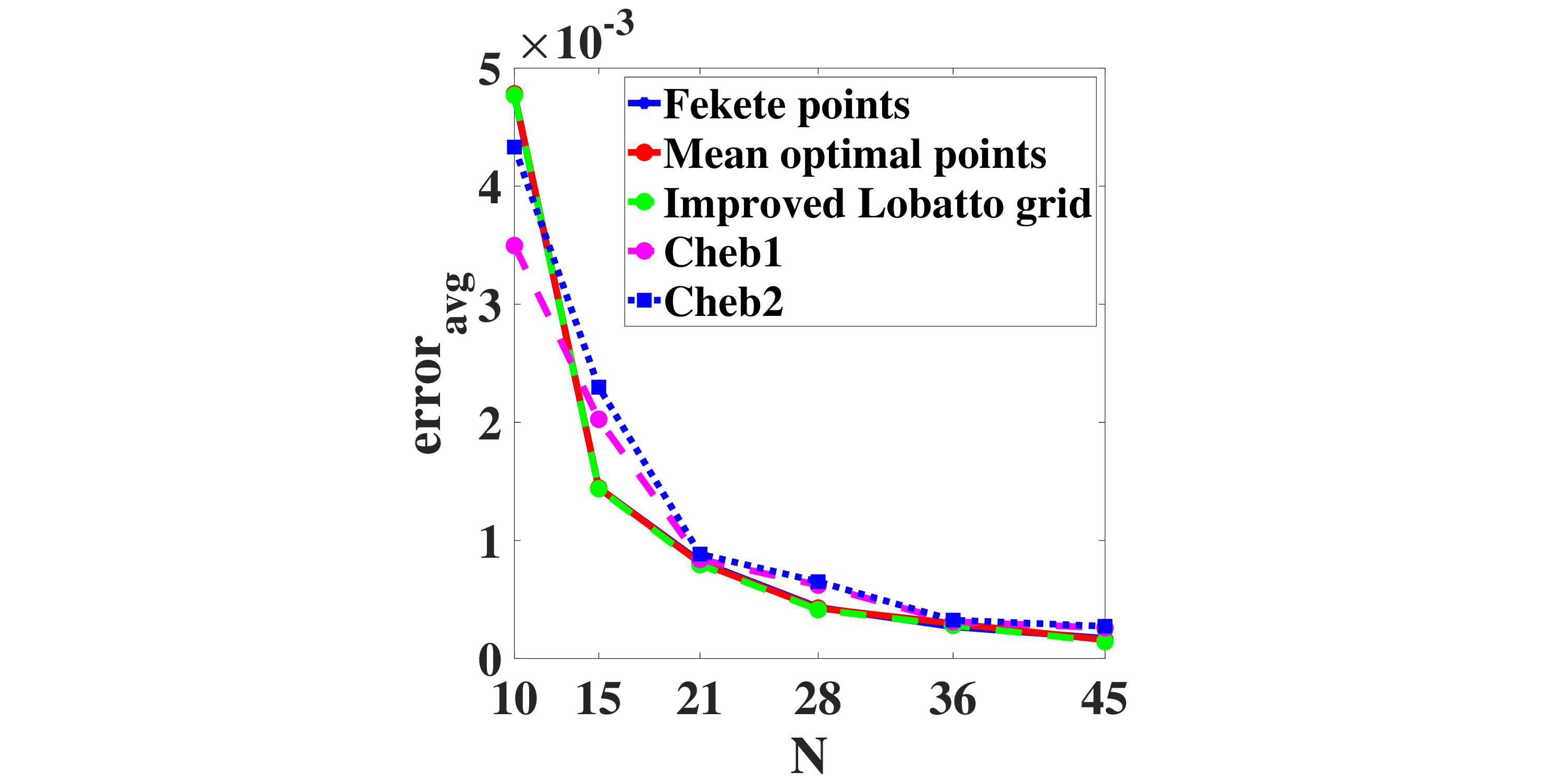}
}%
\centering
\caption{Square lattice: the performance of \eqref{eq:interpolation} under $\operatorname{error}_{\infty}$ and $\operatorname{error}_{\text{avg}}$.}\label{erroe square}
\end{figure}
We calculate the eigenvalue problem for a given sampling point using the conforming Galerkin Finite Element method. Due to the high contrast between the relative permittivity of the circular medium and its external medium, we utilize a fitted mesh $\mathcal{T}_h$ generated by \textit{distmesh2d} provided by Persson and Strang \cite{persson2004simple} to avoid the stabilization issue when an unfitted mesh is used, which is depicted in Figures \ref{mesh1} and \ref{mesh2}. Here, we choose a mesh size $h=0.025a$ for the square lattice and $h=0.05a$ for the hexagonal lattice. The associated conforming piecewise affine space is 
\begin{align*}
V_h=\{v_h\in C(\overline{\Omega}):v_h|_K\in P_1(K)\,\,\text{ for all }K\in\mathcal{T}_h\,\}.
\end{align*}
    
Given $\mathbf{k}\in\mathcal{B}_{\operatorname{red}}$, the conforming Galerkin Finite Element approximation to Problem \eqref{simply} reads as finding a non-trivial eigenpair $(\lambda_h,u_h) \in (\mathbb{R},V_h)$ satisfying 
\begin{equation}\label{fem variational}
    \left\{
\begin{aligned}
&a(u_h,v_h)=\lambda b(u_h,v_h) \text{ for all }v \in V_h \\
&b(u_h,u_h)=1.
\end{aligned}
\right.
\end{equation}
\begin{figure}[H]
\centering
\subfigure[TE mode]{\label{TEfekete45}
\includegraphics[width=0.32\textwidth,trim={10.5cm 1cm 12cm 1cm},clip]{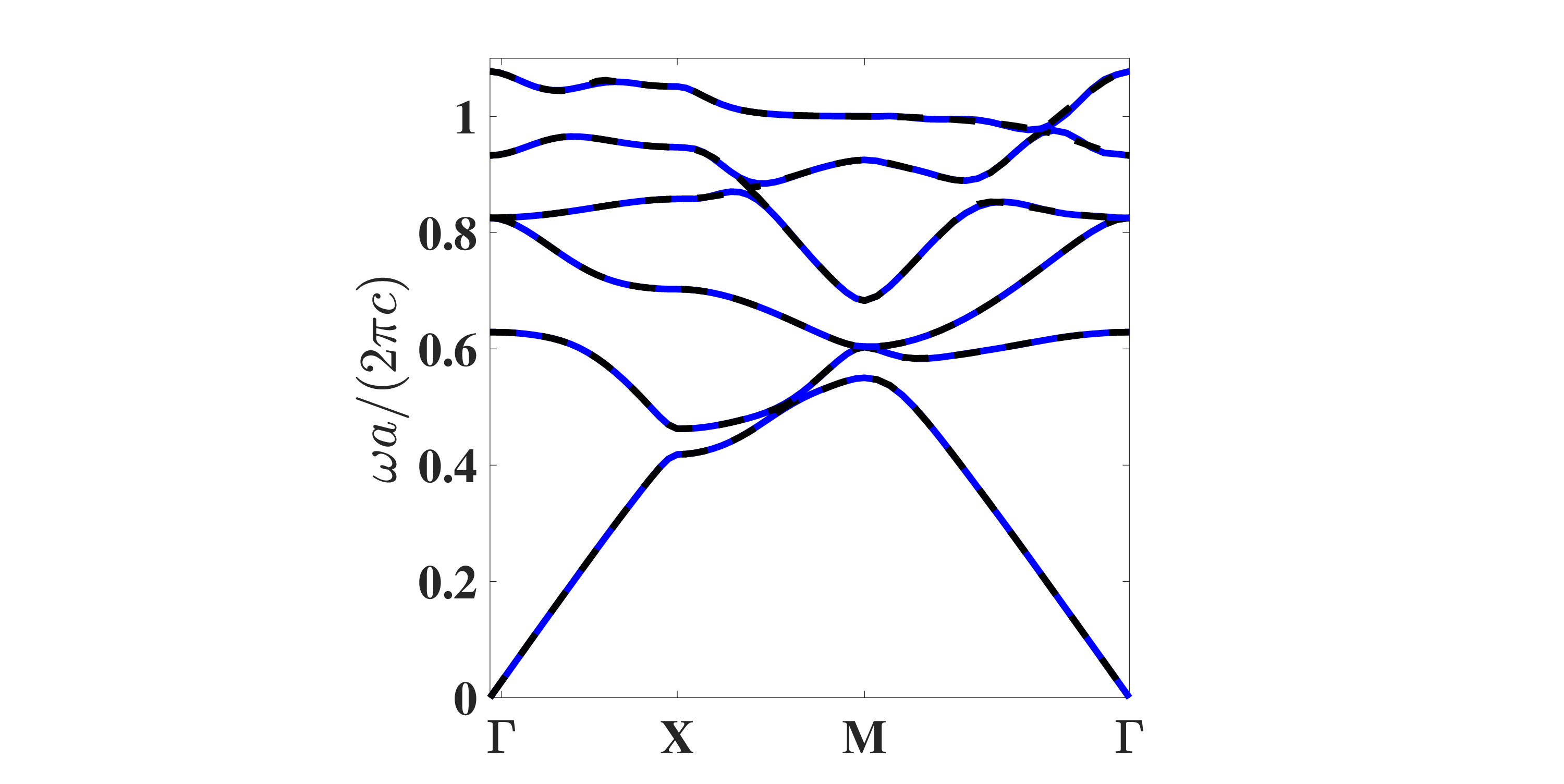}
}%
\quad
\subfigure[TM mode]{\label{TMfekete45}
\includegraphics[width=0.32\textwidth,trim={10.5cm 1cm 12cm 1cm},clip]{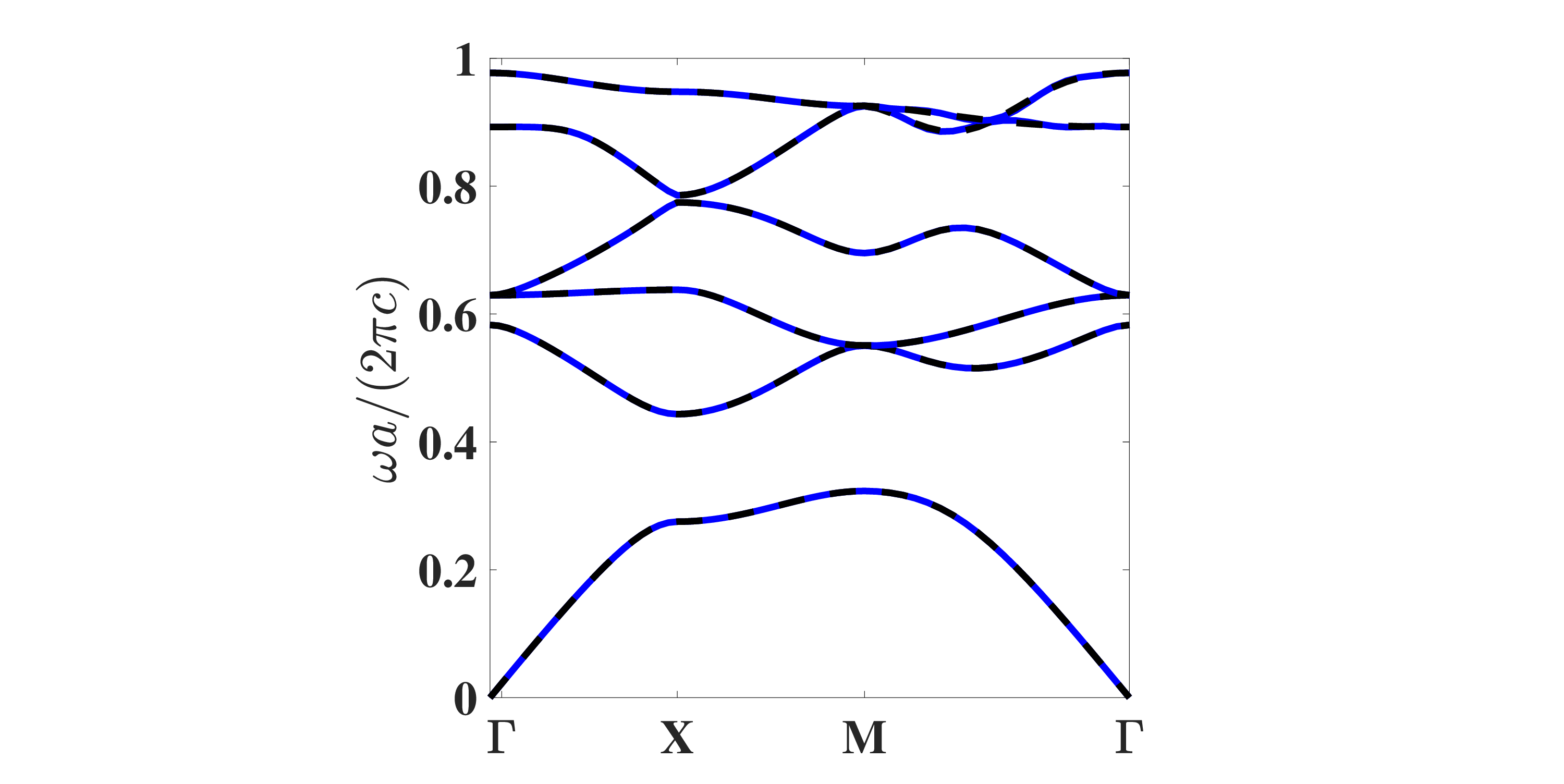}
}%
 \\            
\subfigure[TE mode]{\label{TEcheb2_45}
\includegraphics[width=0.32\textwidth,trim={10.5cm 1cm 12cm 1cm},clip]{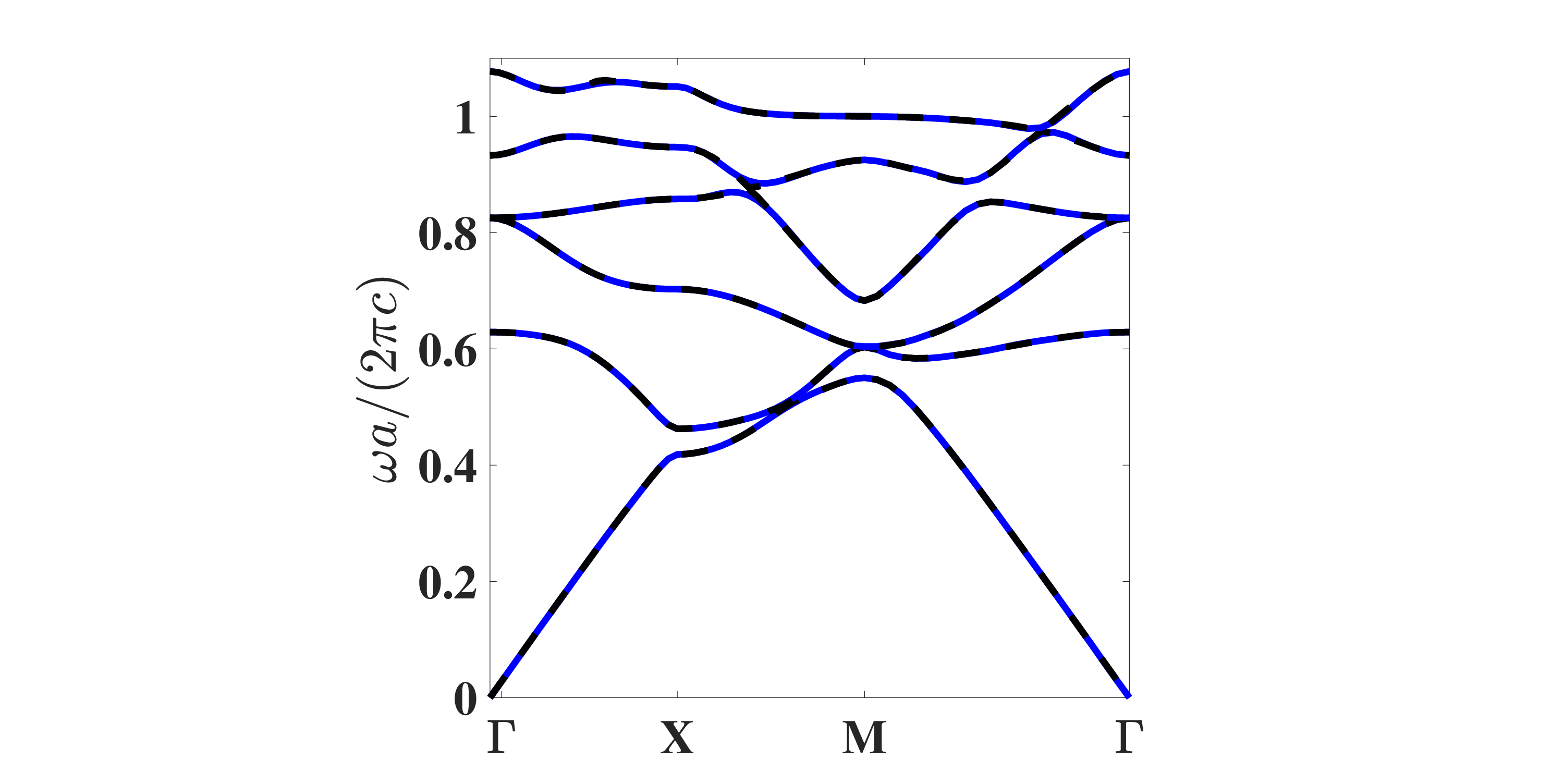}
}%
\quad
\subfigure[TM mode]{\label{TMcheb2_45}
\includegraphics[width=0.32\textwidth,trim={10.5cm 1cm 12cm 1cm},clip]{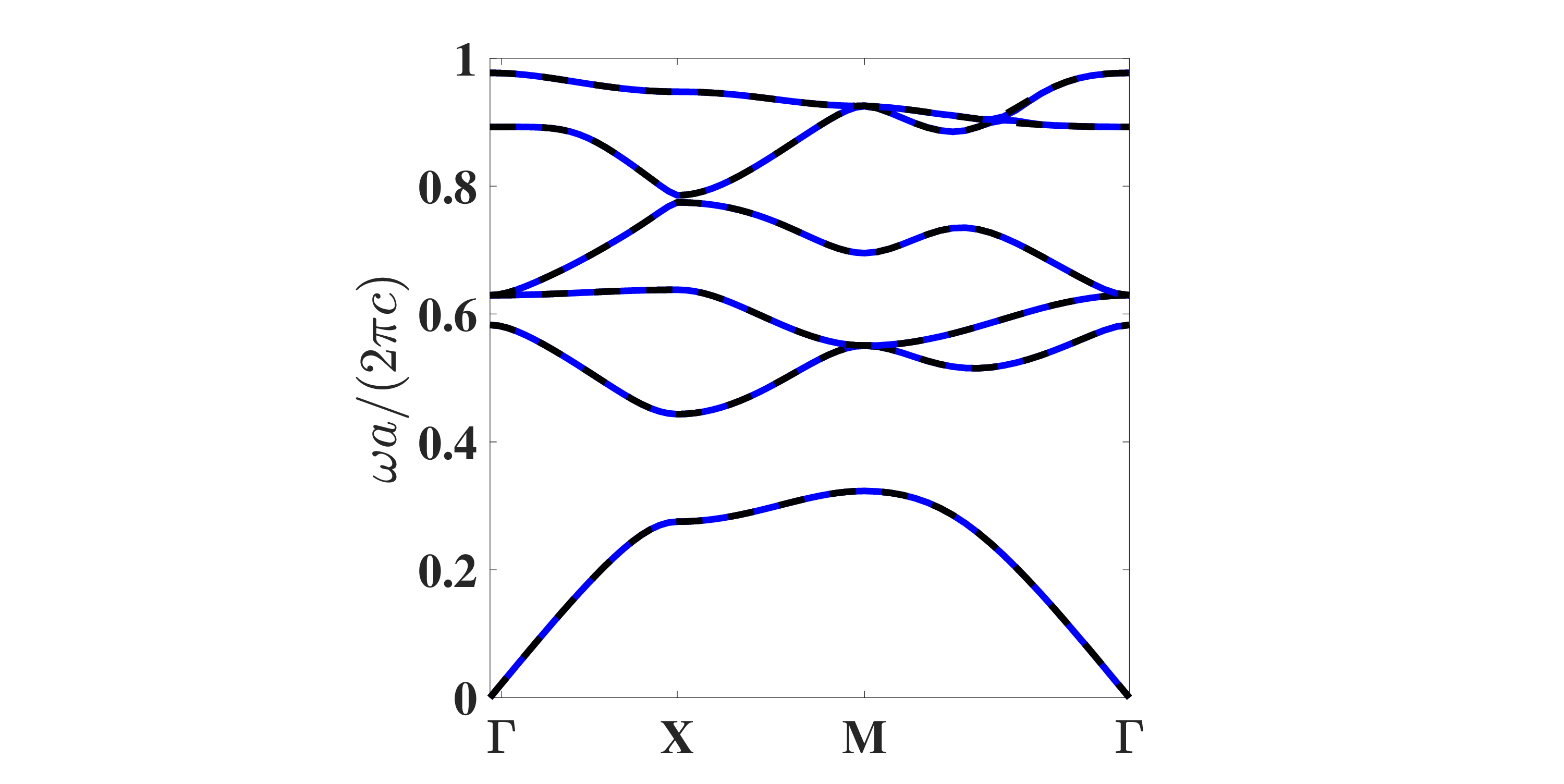}
}%
\caption{Square lattice: band functions along $\partial\mathcal{B}_{\operatorname{red}}$ using Fekete points (the first row) and Cheb2 (the second row) with degree $n=8$. The dashed black line is the reference band structures.}
\label{fig:bandStructure}
\end{figure}  
Note that $\mathcal{B}_{\operatorname{red}}$ is a triangle in both cases. 
To obtain the reference solution with sufficient accuracy, we discretize $\mathcal{B}_{\operatorname{red}}$ with 253 evenly distributed points as shown in Figure \ref{fig:ref-pts}, which is denoted as $\hat{\mathcal{B}}$.
The pointwise relative error is then evaluated on $\hat{\mathcal{B}}$ by, 
 \begin{align*}
    e_i(\mathbf{k}):=\frac{\vert \omega_i(\mathbf{k}) -\mathbf{L}\omega_i(\mathbf{k})\vert}{{\omega_i}(\mathbf{k})} \quad \text{ for }\mathbf{k}\in\hat{\mathcal{B}} \text{ and } i=1,\cdots, 6.
 \end{align*}
Here, $\omega_i(\mathbf{k})$ is the $i$th reference band function obtained directly by the conforming Galerkin Finite Element method over $\hat{\mathcal{B}}$ using the same mesh on the unit cell $\Omega$, and $\mathbf{L}\omega_i(\mathbf{k})$ is the Lagrange interpolation \eqref{eq:interpolation} with a certain sampling method. Given that the first few low-frequency band functions are typically of practical interest \cite{joannopoulos2008molding}, we demonstrate the performance of our numerical scheme using only the first six band functions, which is not the limitation of our scheme. Specifically, we use maximum relative error and average relative error to investigate the performance of our methods, which are defined by 
\begin{align*}
\operatorname{error}_{\infty}:=\max_{i=1,\cdots,6}\max_{\mathbf{k}\in\hat{\mathcal{B}}}\left|e_i(\mathbf{k})\right|\text{ and }
\operatorname{error}_{\text{avg}}:=\frac{1}{253}\sum_{\mathbf{k}\in\hat{\mathcal{B}}}\left(\frac{1}{6}\sum_{i=1}^6\left|e_i(\mathbf{k})\right|\right).
\end{align*}
\subsection{Numerical tests with square lattice in Figure \ref{lattice}}
In this section, we are concerned with the case of square lattice shown in Figure \ref{lattice}, wherein the primitive lattice vectors are $\mathbf{a}_{i}=ae_i$ for $i=1,2$ with a positive parameter $a$. The circle area has radius $r=0.2a$ and $\epsilon=8.9$ (as for alumina) which is embedded in air ($\epsilon=1$). In this case, due to the symmetry of the Brillouin zone $\mathcal{B}$, we can restrict the sampling points $\mathbf{k}$ to the triangle $\mathcal{B}_{\text{red}}$ and apply the sampling methods introduced in Subsection \ref{subsec:sampling points-t}. Alternatively, we can sample $\mathbf{k}$ in a quadrilateral $\Tilde{\mathcal{B}}_{\text{red}}$ composed of $\mathcal{B}_{\text{red}}$ and its mirror symmetry along its longest edge, then consider the sampling methods defined in Subsection \ref{subsec:sampling points-r}. 
\begin{figure}[H]
\centering
\subfigure[TE mode]{\label{zhexian22}
\includegraphics[width=.35\textwidth,trim={11cm 0cm 12.5cm 0cm},clip]{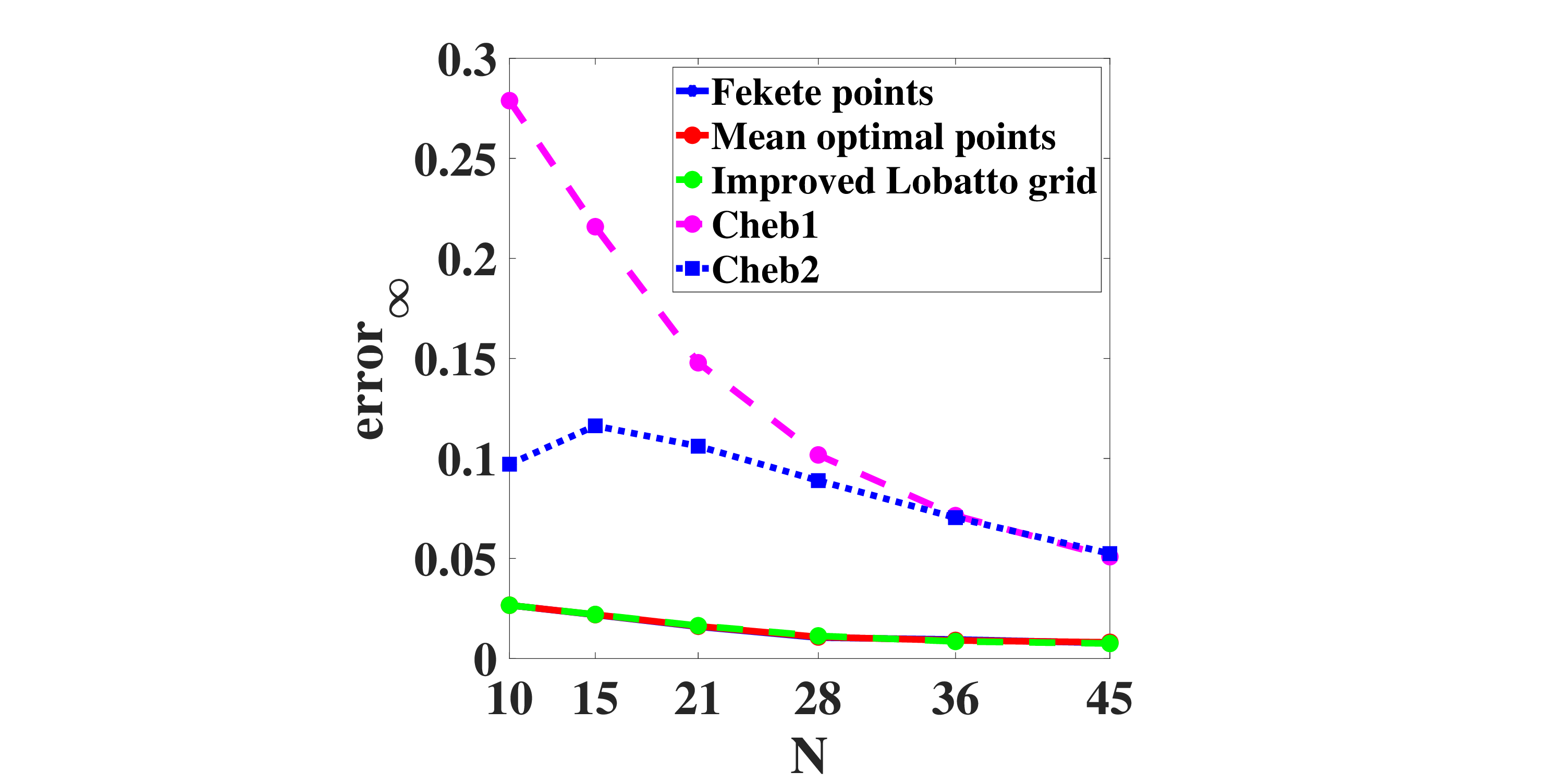}%
}%
\quad
\subfigure[TM mode]{\label{zhexian21}
\includegraphics[width=.35\textwidth,trim={11cm 0cm 12.5cm 0cm},clip]{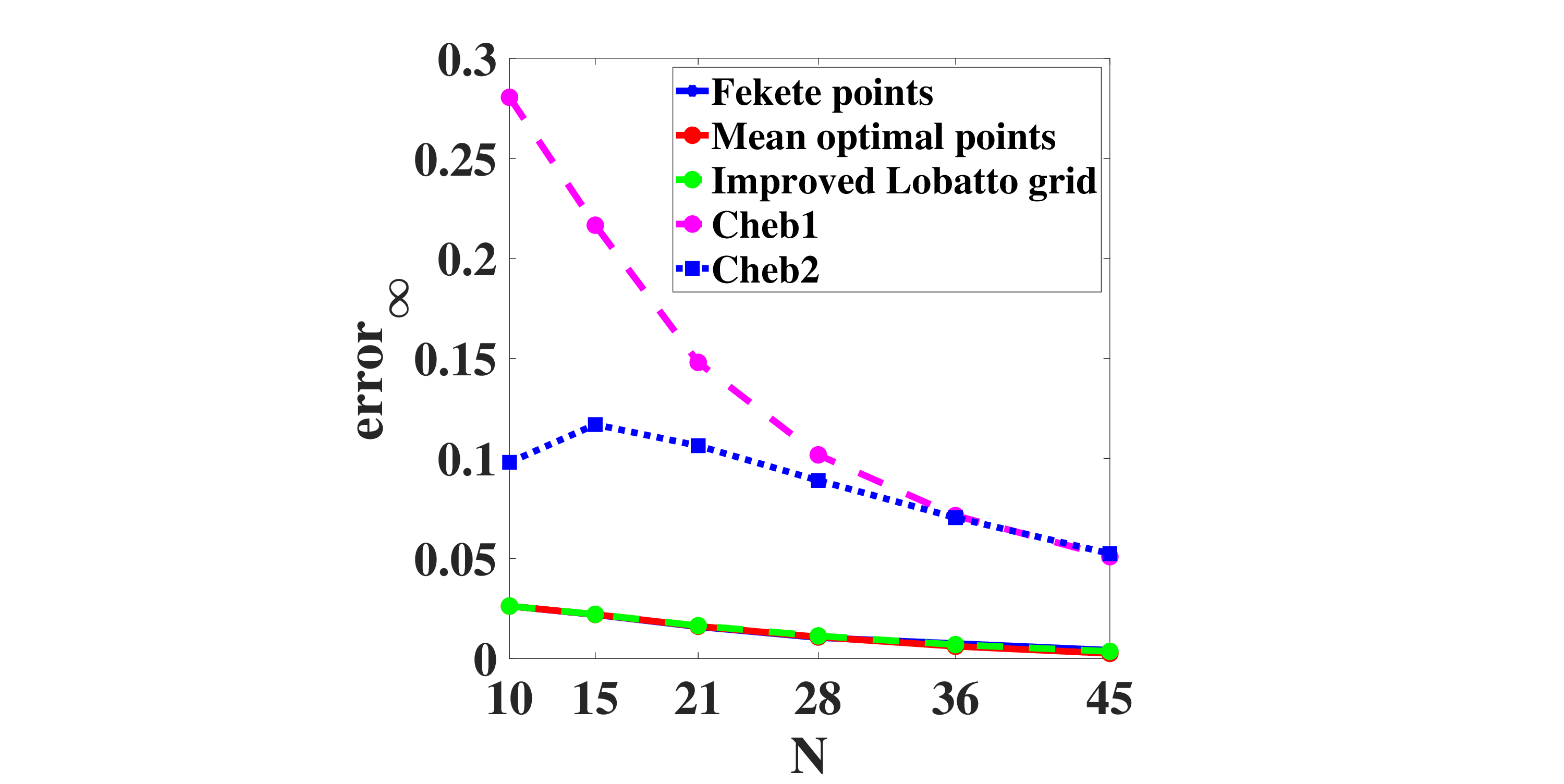}
}%
 \\             
\subfigure[TE mode]{\label{zhexian24}
\includegraphics[width=.35\textwidth,trim={11cm 0cm 12.5cm 0cm},clip]{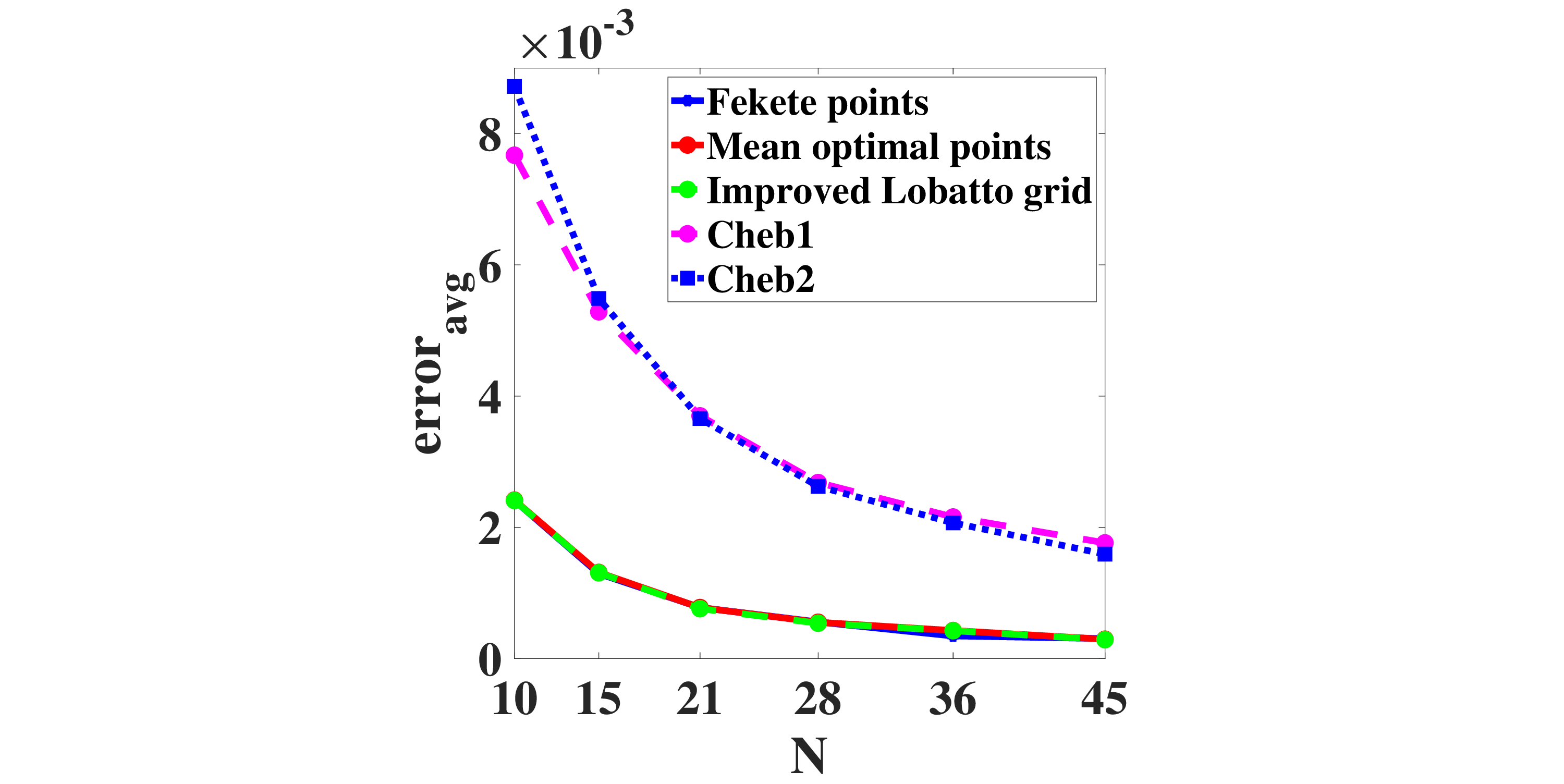}
}%
\quad
\subfigure[TM mode]{\label{zhexian23}
\includegraphics[width=.35\textwidth,trim={11cm 0cm 12.5cm 0cm},clip]{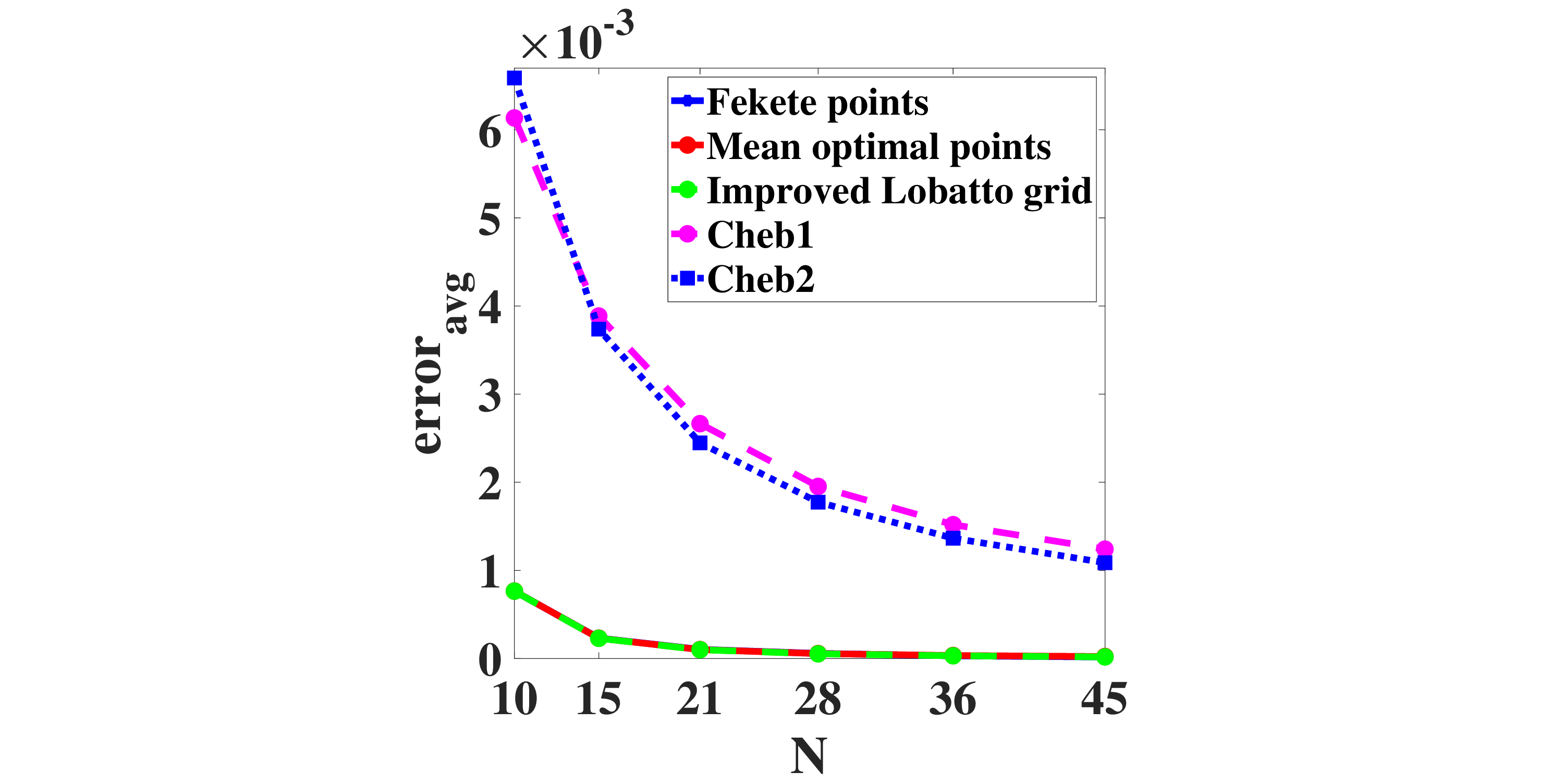}
}%
\caption{Hexagonal lattice: the performance of \eqref{eq:interpolation} under $\operatorname{error}_{\infty}$ and $\operatorname{error}_{\text{avg}}$.}\label{error hex}
\end{figure} 
The performance of Lagrange interpolation on those five kinds of sampling points is shown in Figure \ref{erroe square}. Note that the horizontal axis shows the number of sampling points inside $\mathcal{B}_{\text{red}}$. 
Figure \ref{fig:bandStructure} depicts the approximate band structure along the edges of the irreducible Brillouin zone using Fekete points and Cheb2 with degree $n=8$. 
\subsection{Numerical tests with hexagonal lattice in Figure \ref{lattice3}}
In this section, we focus on another kind of 2D PhCs which has infinite periodic hexagonal lattice. As shown in Figure \ref{lattice3}, its unit cell is composed of six cylinders of dielectric material with dielectric constant $\epsilon=8.9$ embedded in air. The primitive lattice vectors are $\mathbf{a}_1 = \frac{a}{4}({e}_1+\sqrt{3}{e}_2)$ and $\mathbf{a}_2 = \frac{a}{4}({e}_1-\sqrt{3}{e}_2)$, and the lattice constant $a=3R$. The radius of cylinders is $r=\frac{1}{3}R$. Here, the positive parameter $R$ denotes the length of hexagon edges. In this case, due to the symmetry of the first Brillouin zone, we can restrict the wave vector $\mathbf{k}$ to $\mathcal{B}_{\text{red}}$ or $\Tilde{\mathcal{B}}_{\text{red}}$ which is composed of $\mathcal{B}_{\text{red}}$ and its symmetry along its longest edge.
\begin{figure}[H]
\centering
\subfigure[TE mode]{\label{2TEfekete45}
\includegraphics[width=0.32\textwidth,trim={11cm 1cm 12.5cm 1cm},clip]{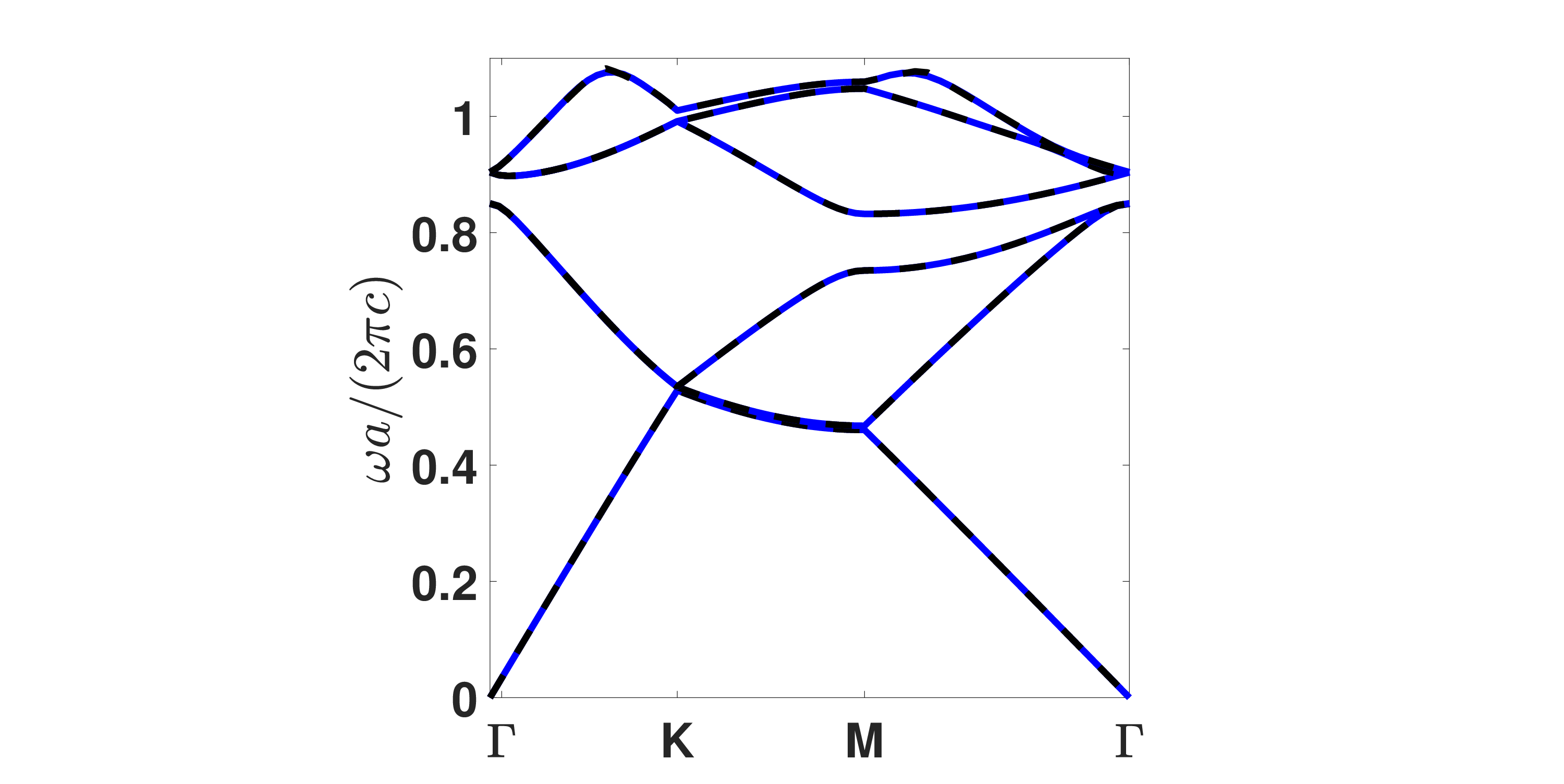}
}%
\quad
\subfigure[TM mode]{\label{2TMfekete45}
\includegraphics[width=0.32\textwidth,trim={11cm 1cm 12.5cm 1cm},clip]{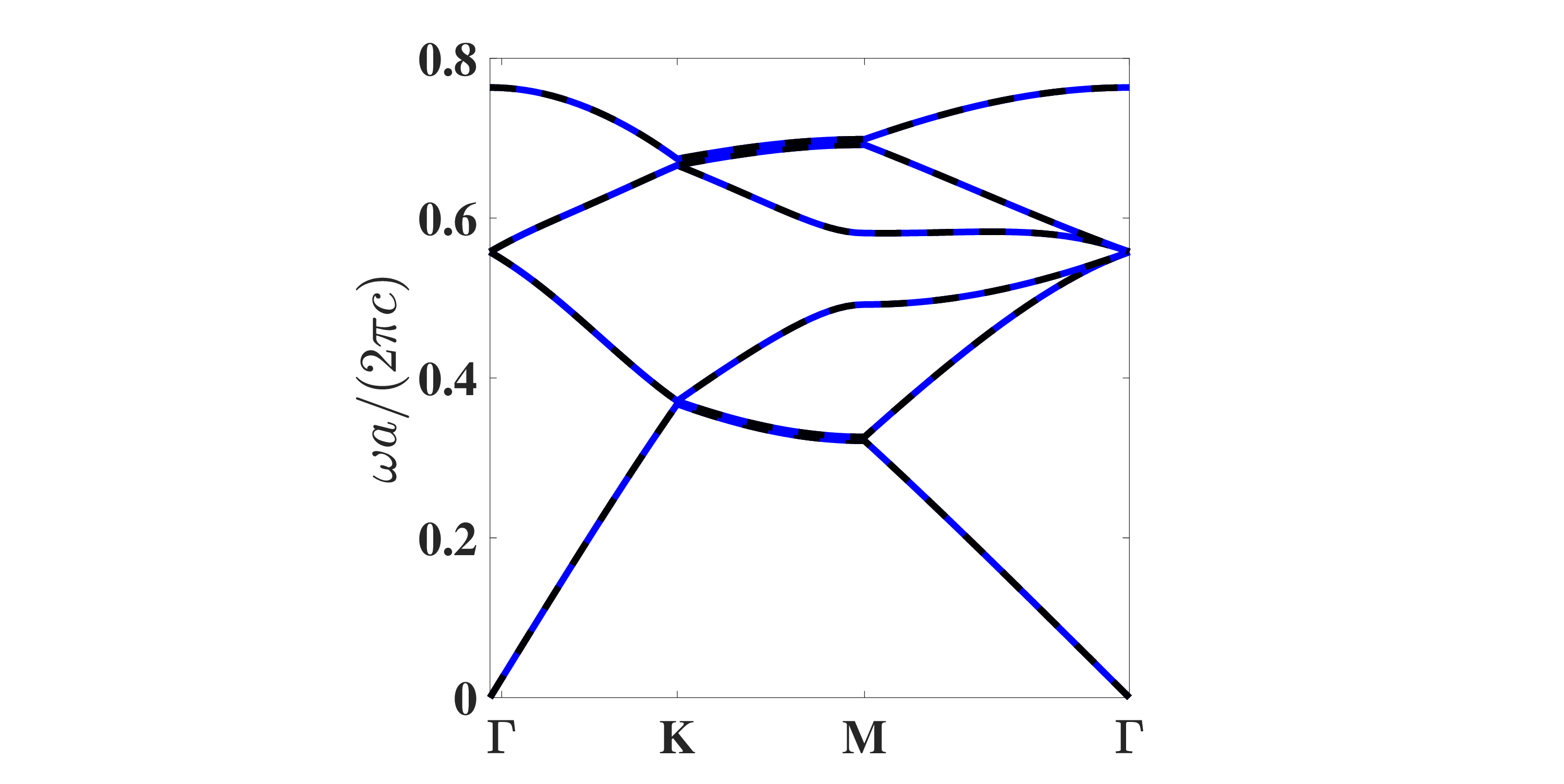}
}%
 \quad              
\subfigure[TE mode]{\label{2TEcheb2_45}
\includegraphics[width=0.32\textwidth,trim={11cm 1cm 12.5cm 1cm},clip]{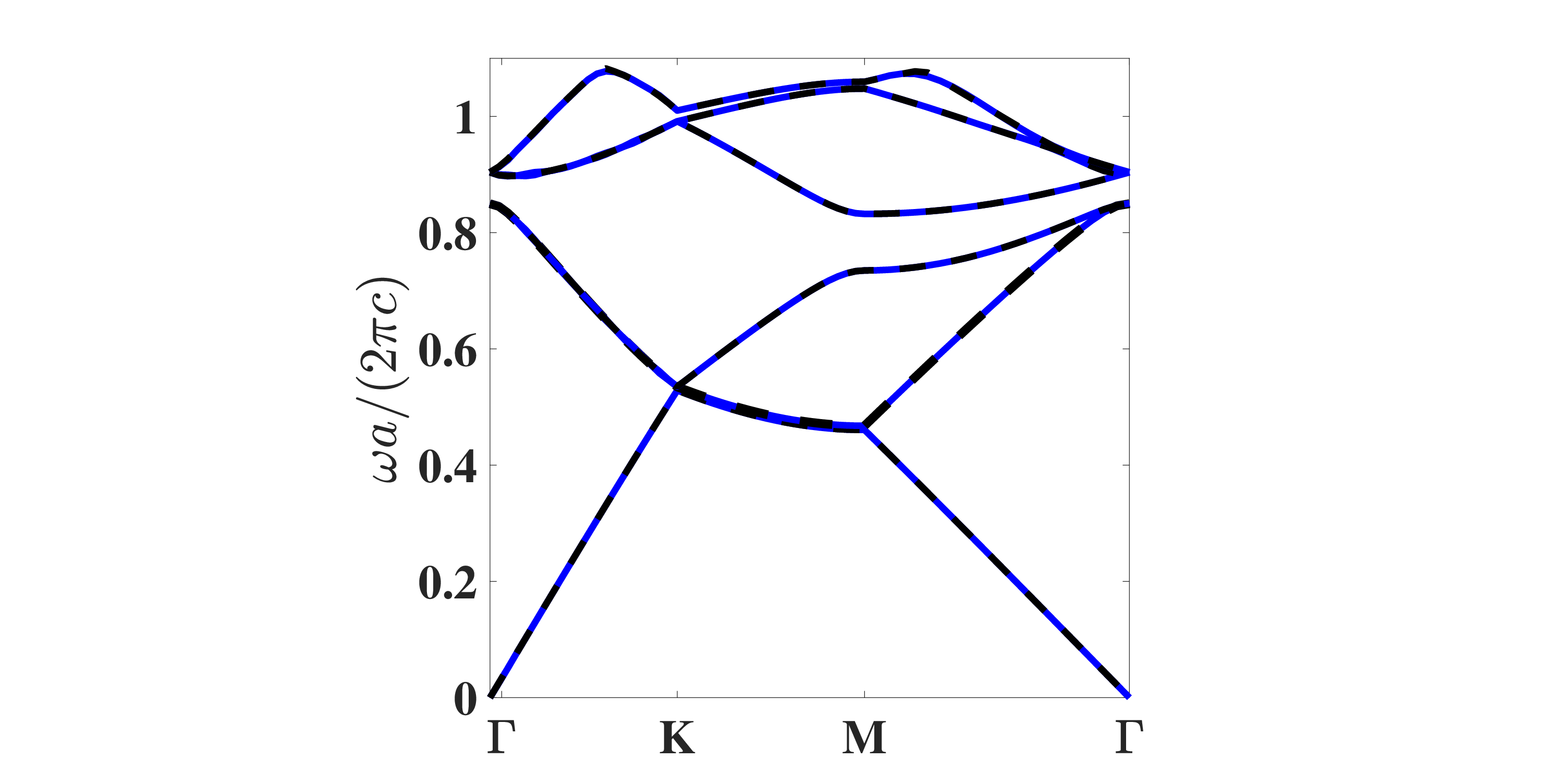}
}%
\quad
\subfigure[TM mode]{\label{2TMcheb2_45}
\includegraphics[width=0.32\textwidth,trim={11cm 1cm 12.5cm 1cm},clip]{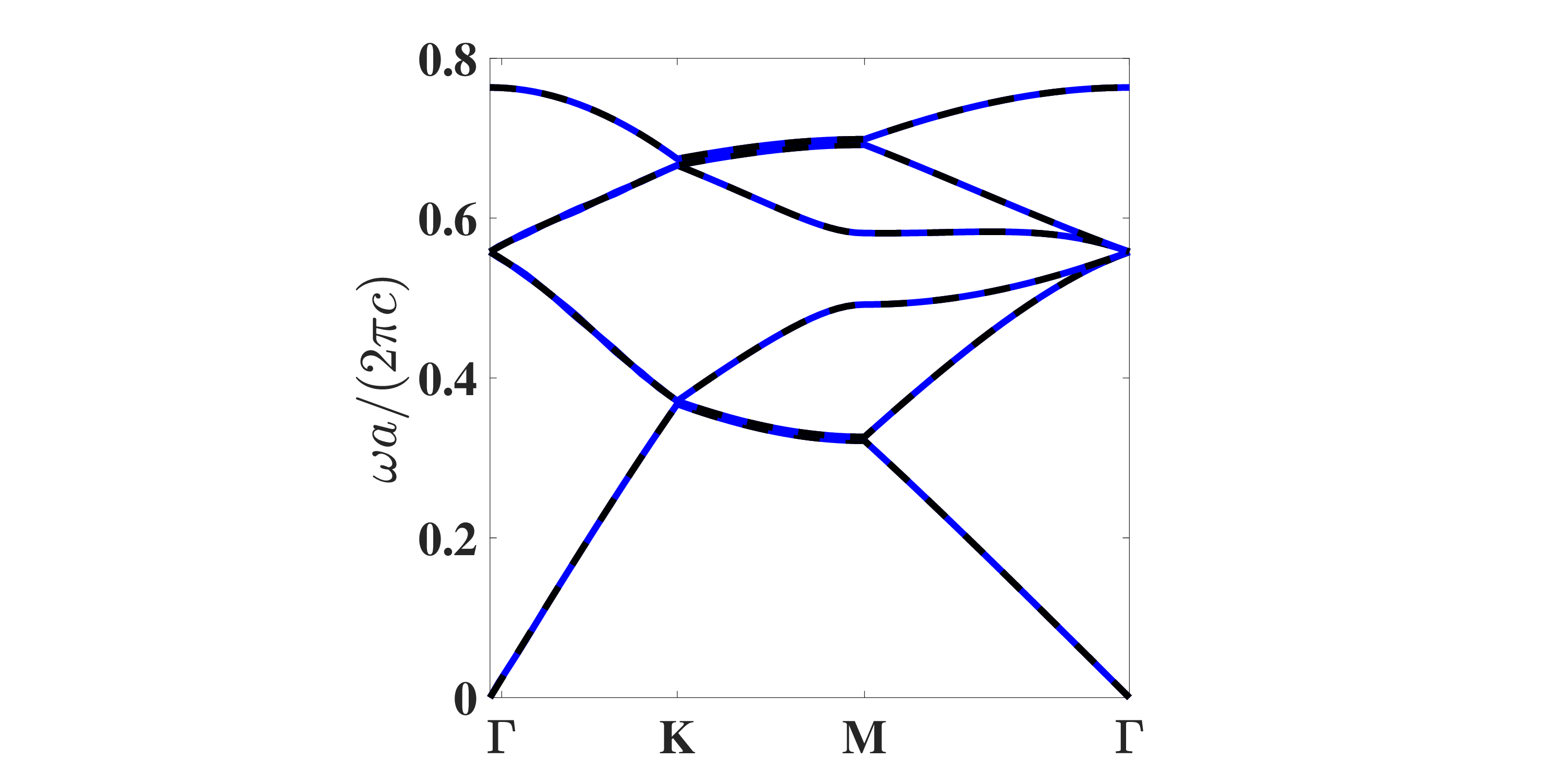}
}%
\caption{Hexagonal lattice: band functions along $\partial\mathcal{B}_{\text{red}}$} using Fekete points (the first row) and Cheb2 (the second row) with degree $n=8$. The dashed black line is the reference band structures.
\label{fig:edge}
\end{figure} 
In Figure \ref{error hex}, we display the performance of Lagrange interpolation methods \eqref{eq:interpolation} based upon those five types of sampling methods measured in $\operatorname{error}_{\infty}$ and $\operatorname{error}_{\operatorname{avg}}$ against the number of sampling points inside $\mathcal{B}_{\text{red}}$. One can observe excellent performance for all cases from Figures \ref{erroe square}
and \ref{error hex}. For instance, 45 sampling points inside $\mathcal{B}_{\text{red}}$ leads to $\operatorname{error}_{\infty}$ below $1\%$ for all sampling points in Section \ref{subsec:sampling points-t}. Note that the performance for hexagonal unit cell is typically better than that for the square unit cell since the first six band functions are smoother in the former case. One can infer the regularity of band functions along the edges of $\mathcal{B}_{\text{red}}$. We observe from Figures \ref{fig:edge} and \ref{fig:bandStructure} that the band functions with square lattice exhibit a larger and more diverse frequency distribution range, resulting in more singularities. Besides, we can also observe from Figures \ref{erroe square}
and \ref{error hex} that the performance of Lagrange interpolation based upon the first three sampling methods (Fekete points, mean optimal points and improved Lobatto grid) in both TE mode and TM mode, are similar or better than Cheb1 and Cheb2.
\begin{figure}[H]
\centering
\subfigure[Square lattice TE mode]{\label{zoom2}
\includegraphics[width=0.32\textwidth,trim={11cm 1cm 12cm 1cm},clip]{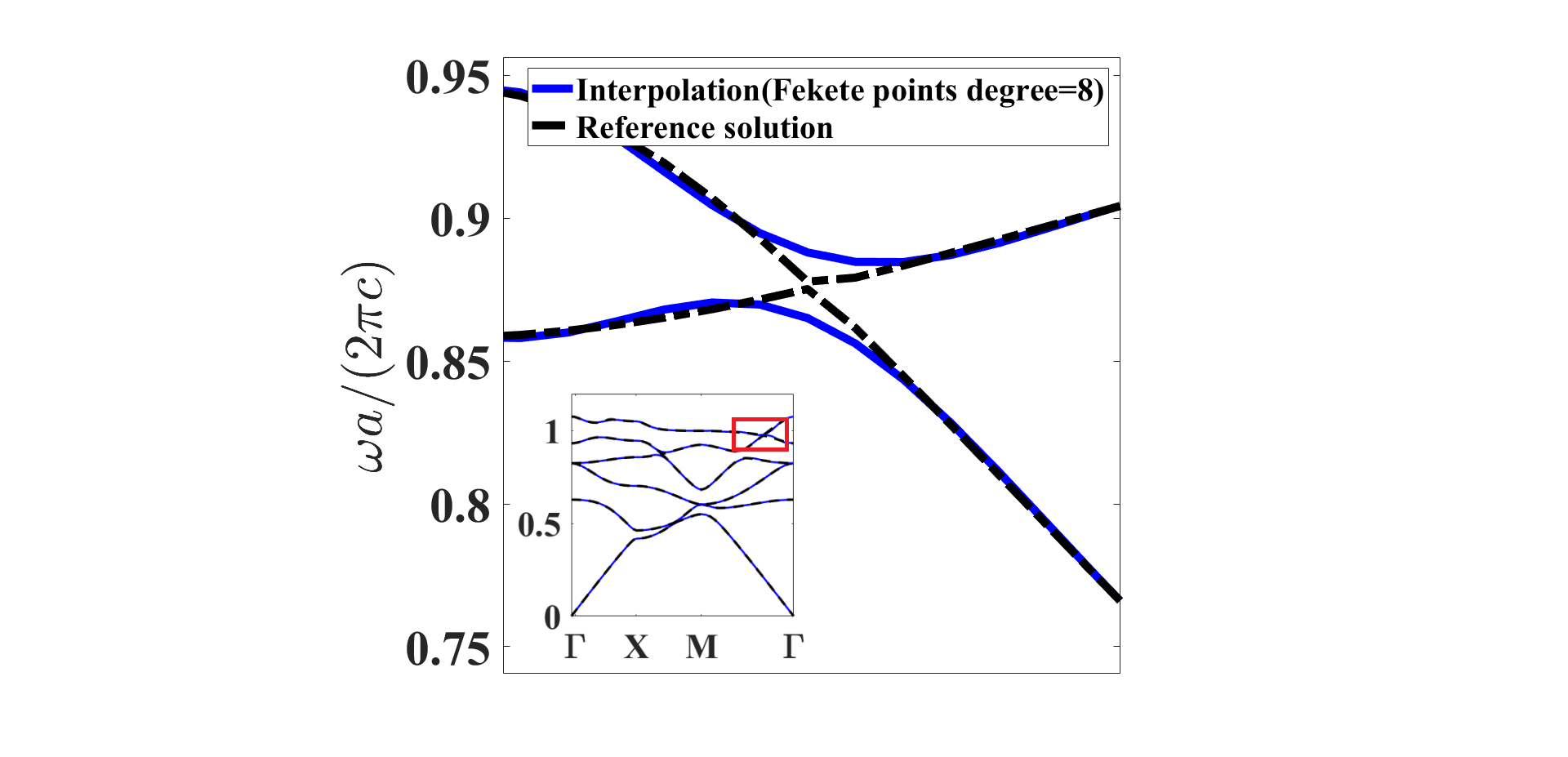}
}%
\quad
\subfigure[Square lattice TM mode]{\label{zoom1}
\includegraphics[width=0.32\textwidth,trim={11cm 1cm 12cm 1cm},clip]{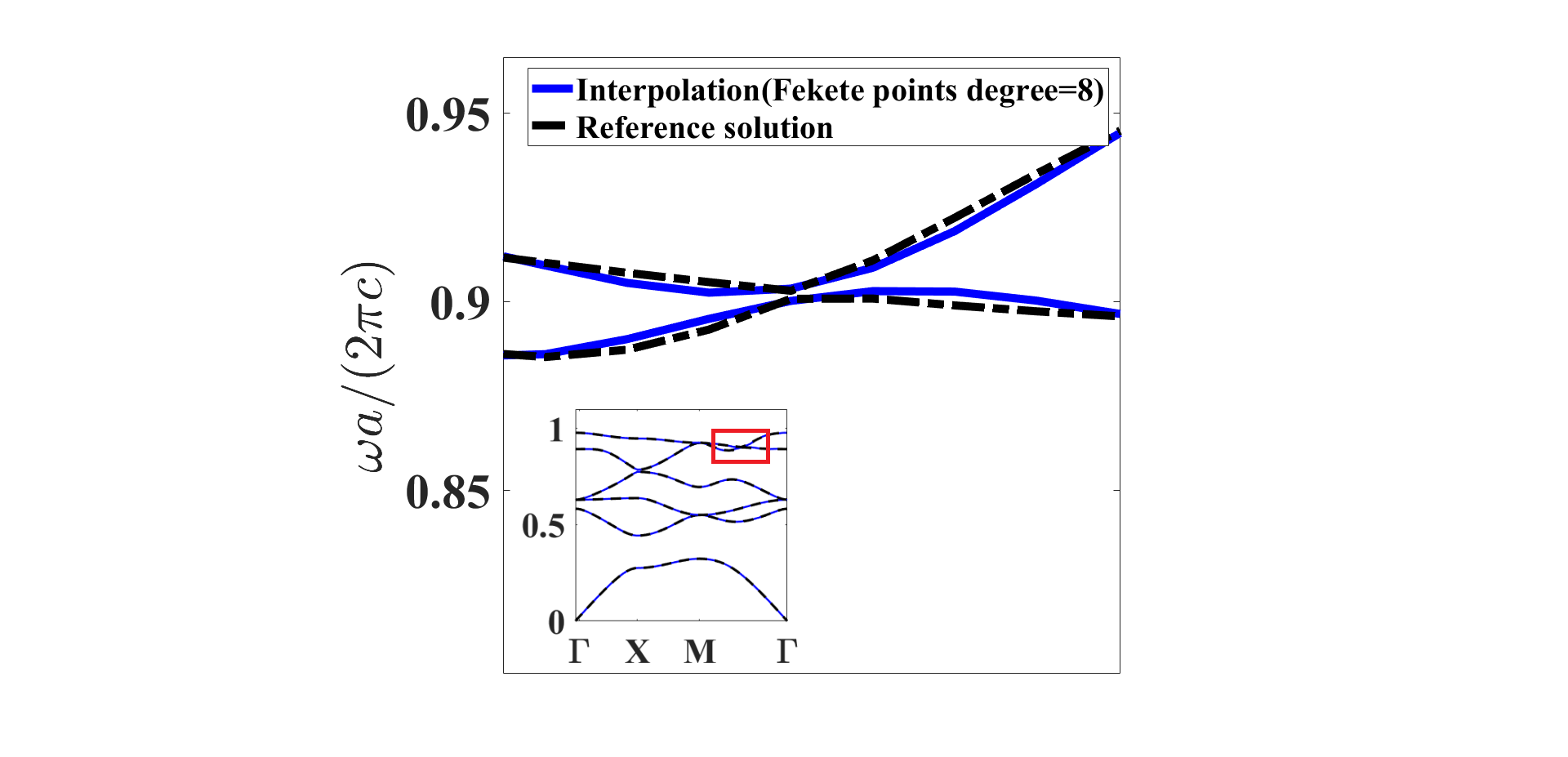}

}%
 \\            
\subfigure[Hexagonal lattice TE mode]{\label{zoom4}
\includegraphics[width=0.32\textwidth,trim={11cm 1cm 12cm 1cm},clip]{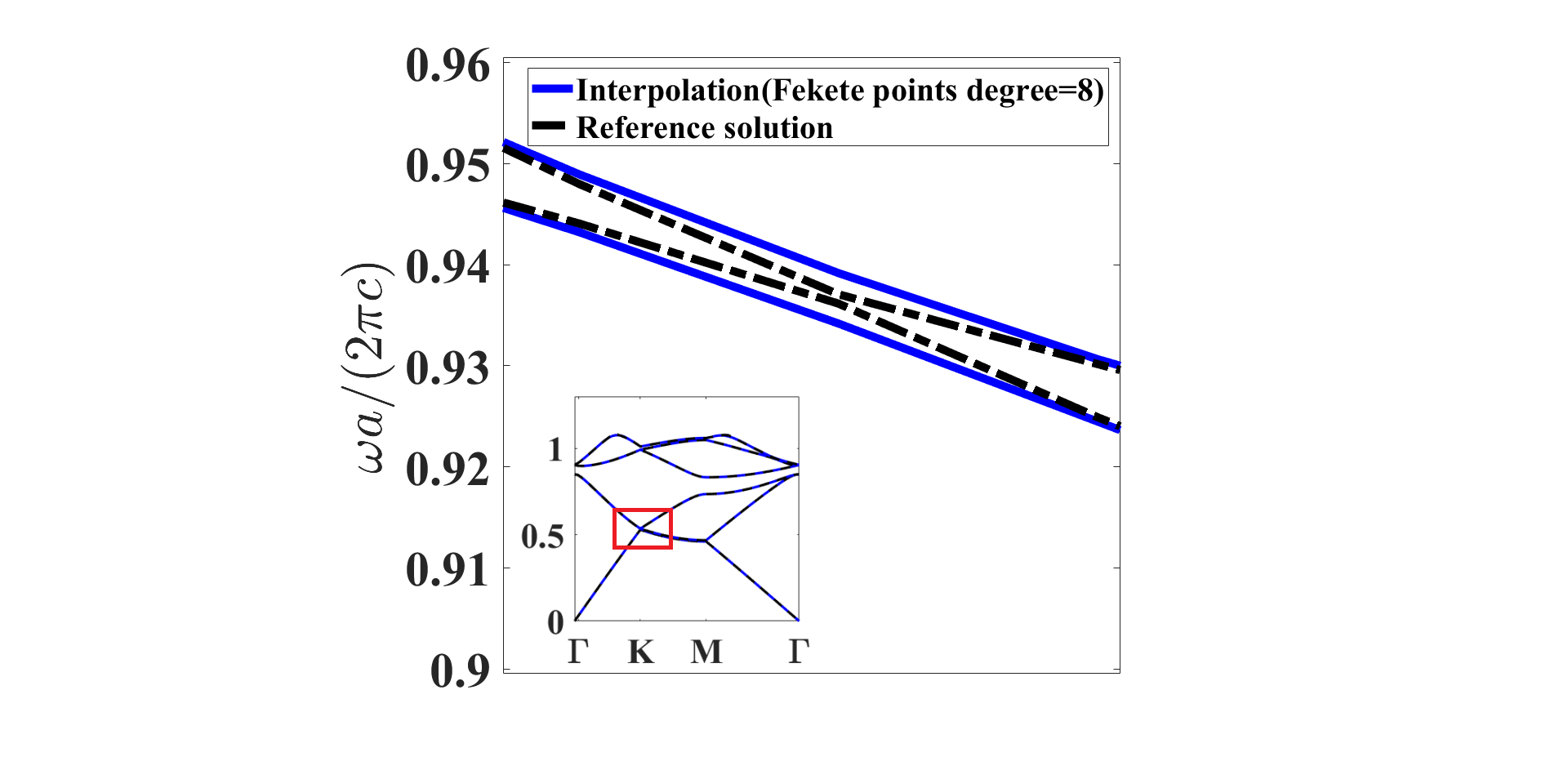}
}%
\quad
\subfigure[Hexagonal lattice TM mode]{\label{zoom3}
\includegraphics[width=0.32\textwidth,trim={11cm 1cm 12cm 1cm},clip]{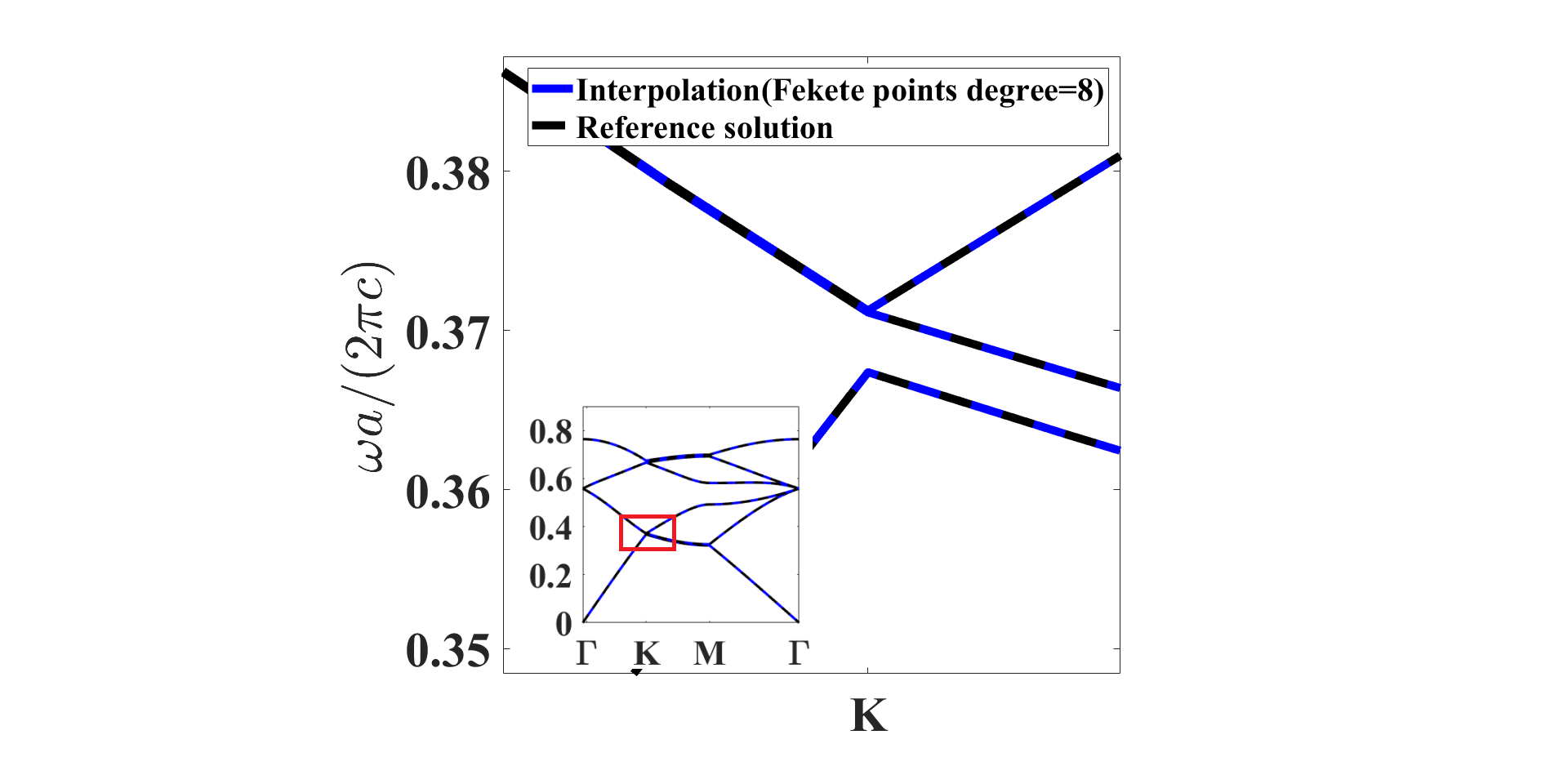}
}%
\caption{A magnified view of certain graphs in the red box from Figures \ref{fig:bandStructure} and \ref{fig:edge}.}\label{zoom in}
\end{figure} 
Figure \ref{fig:edge} illustrates the approximate band structure along $\partial\mathcal{B}_{\text{red}}$ using Fekete points and Cheb2 with degree $n=8$, showing that the approximate band functions attain a reasonable accuracy. Together with Figure \ref{fig:bandStructure}, we conclude that 
all those five methods can provide reasonably good reconstruction.
Figure \ref{zoom in} shows a magnified view of some special regions in Figures \ref{fig:bandStructure} and \ref{fig:edge}. We can observe that the areas with large interpolation errors are clearly the areas where the adjacent band functions are very close, which is consistent with the conclusion we have drawn before that the branch points are singular points. 

Note that it is quite difficult or even impossible to identify all branch points or distinguish from a fake branch point where two adjacent band functions are close but without intersecting due to many factors, for example the rounding error and numerical error resulting from \eqref{fem variational}. As a result, our numerical method can only yield approximative branch points where the adjacent band functions are close. Nevertheless, the overall performance demonstrates that our method is capable of approximating the band functions of 2D PhCs within a reasonable accuracy.

We present in Figure \ref{log log} the convergence of our proposed method for the crystals with both square lattice and hexagonal lattice for the TE mode and TM mode. We observe algebraic convergence and the slopes of these five sampling methods are similar as we expected from the approximation theory \cite{timan2014theory}.
\begin{figure}[H]
\centering
\subfigure[Square lattice TE mode]{\label{log2}
\includegraphics[width=.35\textwidth,trim={11cm 0cm 12.5cm 0cm},clip]{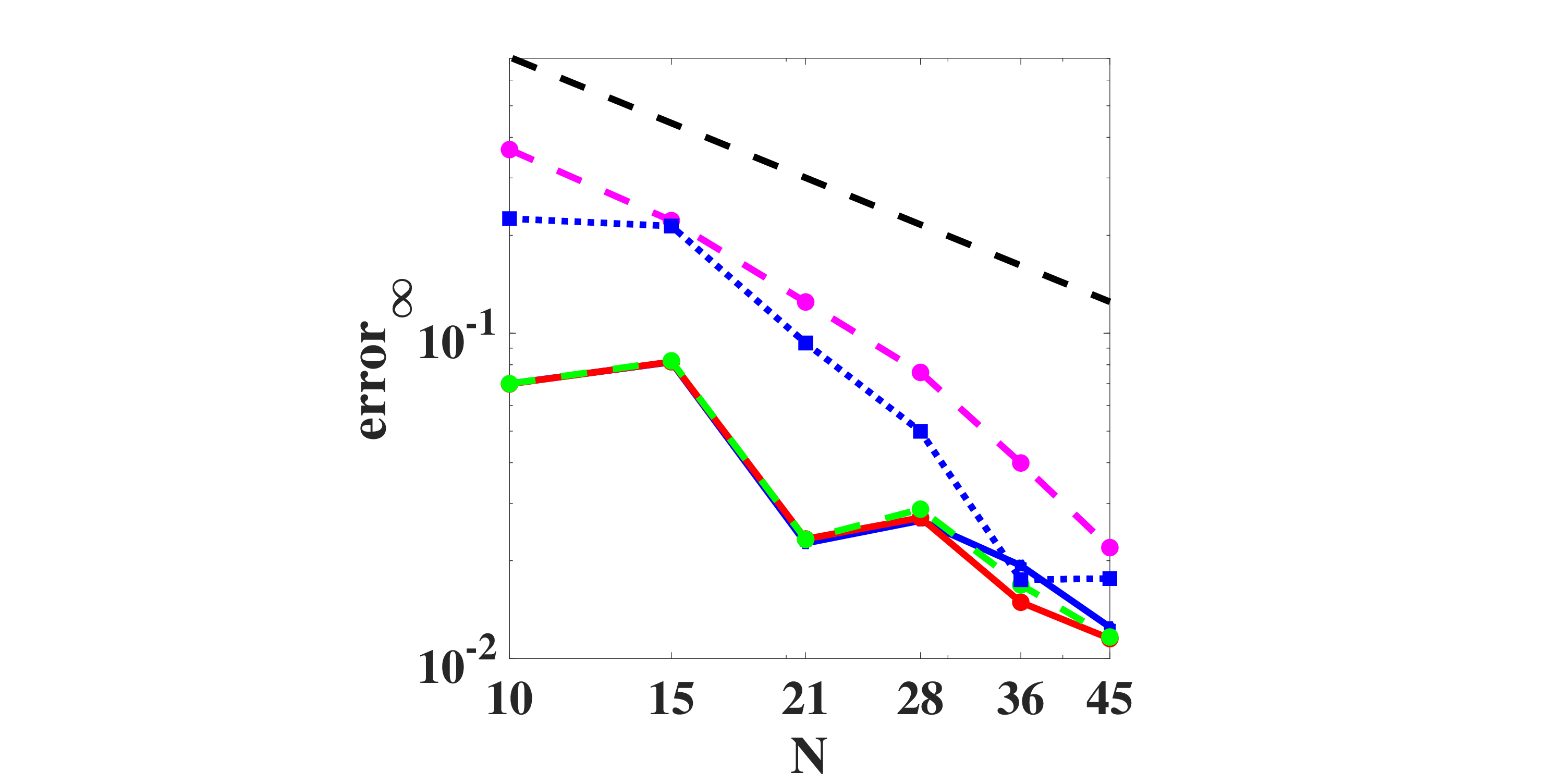}
}%
\quad
\subfigure[Square lattice TM mode]{\label{log1}
\includegraphics[width=.35\textwidth,trim={11cm 0cm 12.5cm 0cm},clip]{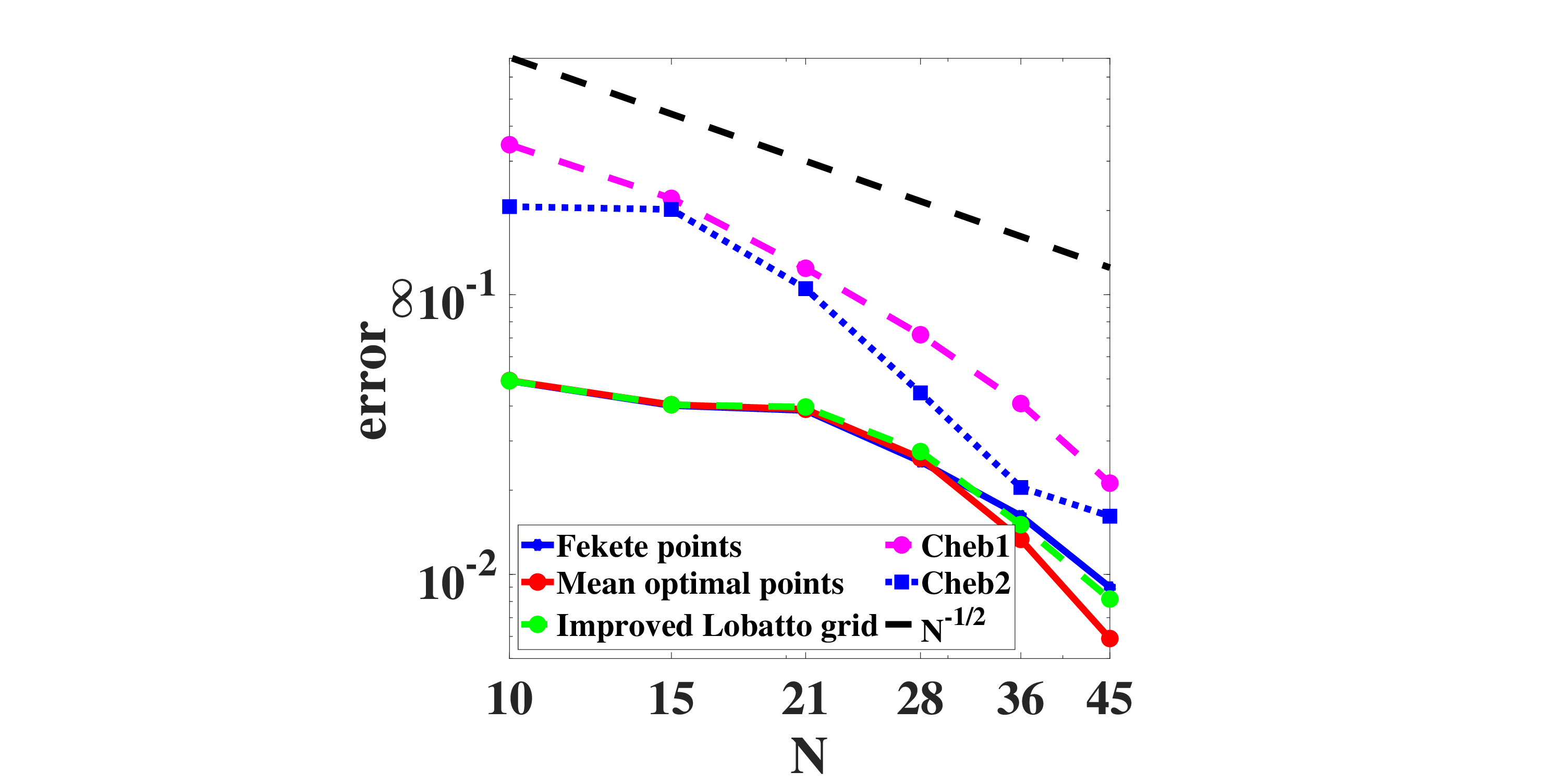}

}%
 \\             
\subfigure[Hexagonal lattice TE mode]{\label{log4}
\includegraphics[width=.35\textwidth,trim={11cm 0cm 12.5cm 0cm},clip]{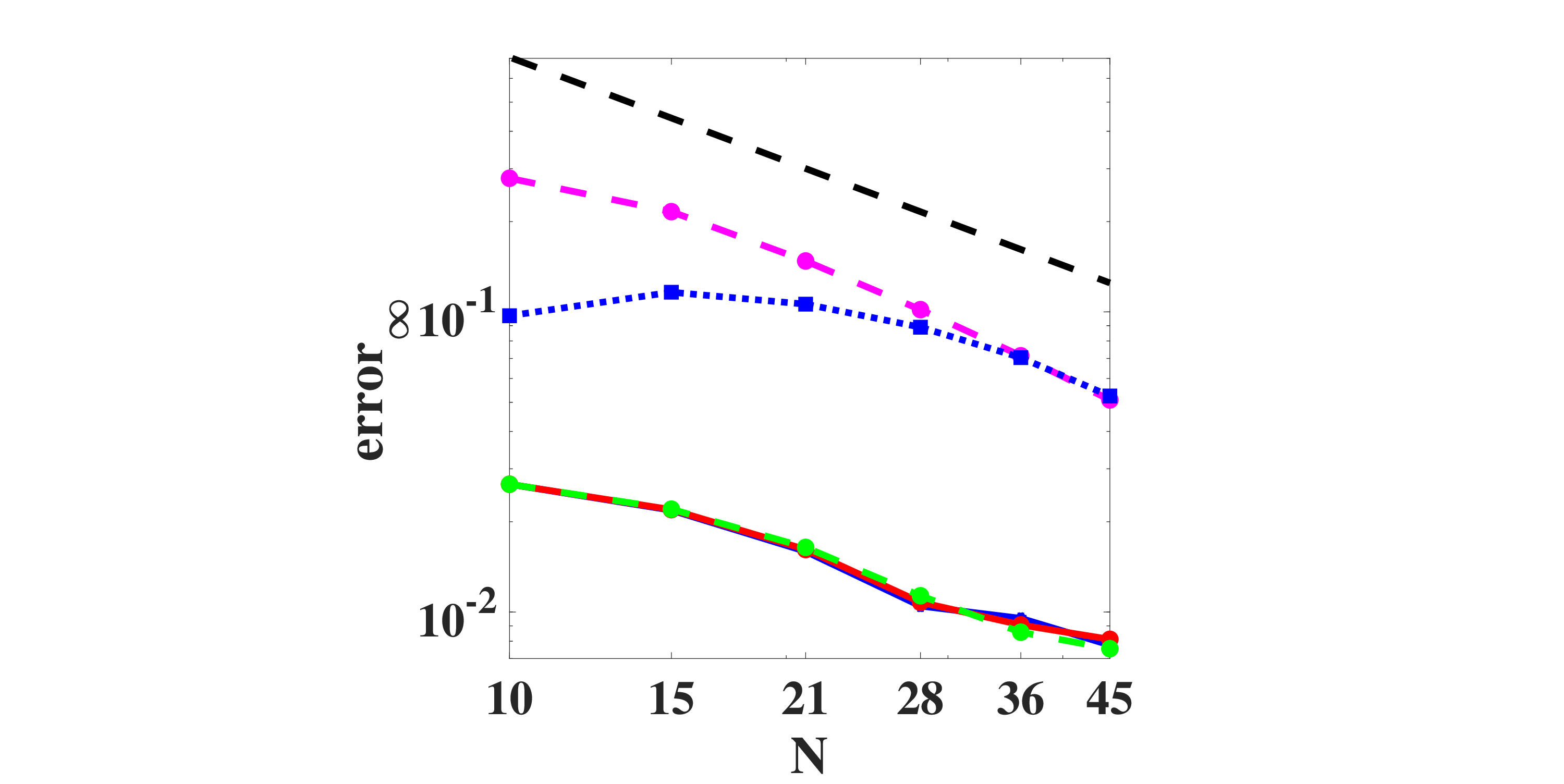}
}%
\quad
\subfigure[Hexagonal lattice TM mode]{\label{log3}
\includegraphics[width=.35\textwidth,trim={11cm 0cm 12.5cm 0cm},clip]{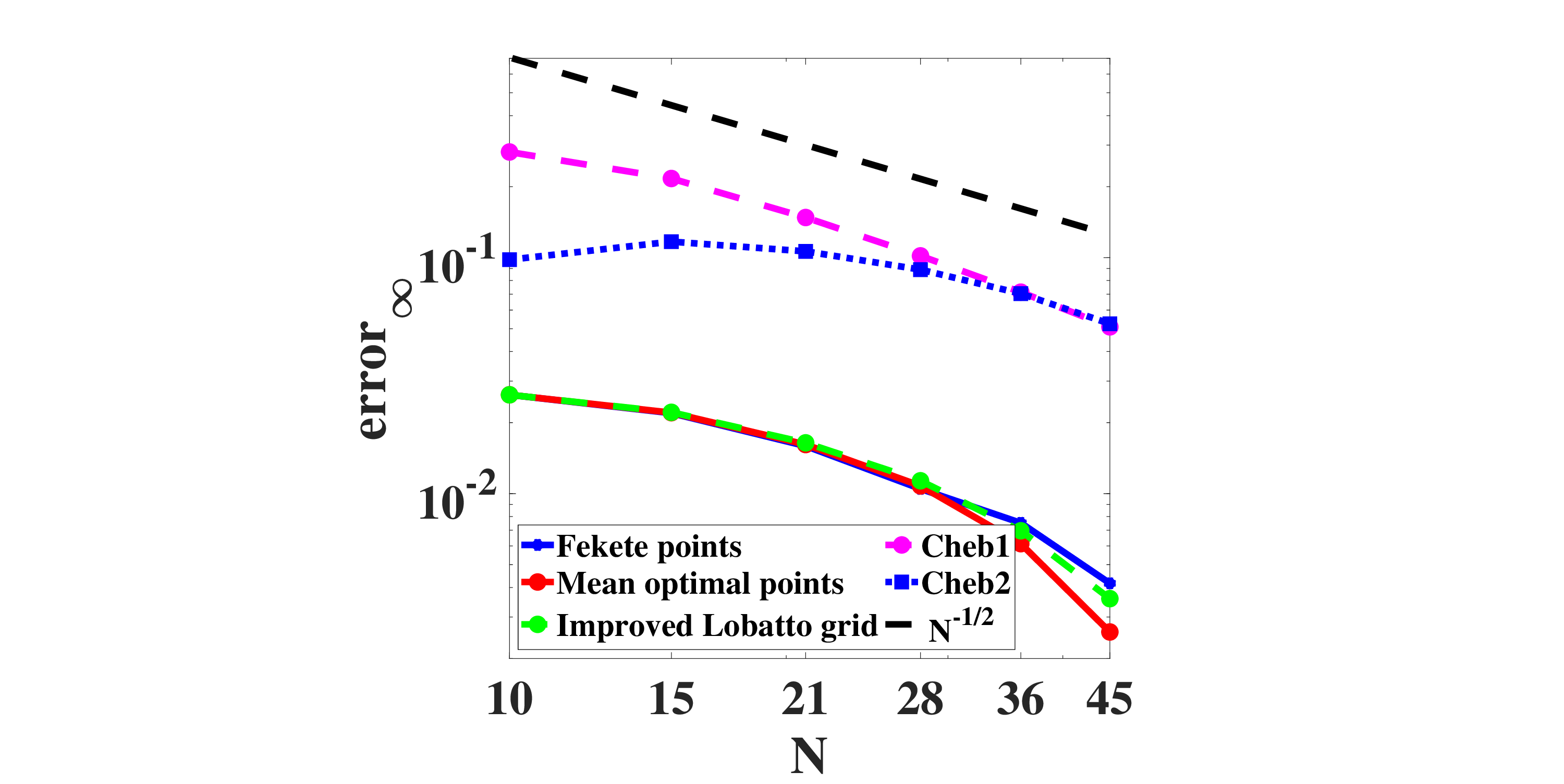}
}%
\caption{Convergence results.}\label{log log}
\end{figure} 
\section{Conclusion}\label{sec:conclusion}
In this paper, we analyze the properties of photonic band functions and consider the problem of band structure reconstruction in the context of 2D PhCs. The regularity of band functions is crucial for our proposed approximation method. In contrast to the traditional sampling algorithms based upon global polynomial interpolation and limited to the edges of the irreducible Brillouin zone, we propose an efficient and accurate global approximation algorithm based upon the Lagrange interpolation methods for computing band functions over the whole Brillouin zone. 
Regarding the selection of sampling points, we consider five different sampling algorithms to select suitable interpolation points in the irreducible Brillouin zone or a quadrilateral composed of the irreducible Brillouin zone and its mirror symmetry along one edge. We observe algebraic convergence rate and the numerical tests demonstrate that our method can approximate band functions efficiently. For example, our methods reach relative error below $1\%$ using only 45 sampling points for sampling methods defined on the Irreducible Brillouin zone. It should be noted that we focus on sampling algorithms based upon global polynomial interpolation in this paper since this is the current state of the art. However, this current method cannot identify branch points quite efficiently, since the global interpolation approximation has a relatively slow convergence rate due to the piecewise analyticity of the band functions. In order to make better use of this property, we will explore adaptive sampling algorithms based upon piecewise polynomial interpolation in the future. 
\section*{Acknowledgments}
Y. W. acknowledges support from the Research Grants Council (RGC) of Hong Kong via the Hong Kong PhD Fellowship Scheme (HKPFS). 
G.L. acknowledges support from Newton International Fellowships Alumni following-on funding awarded by The Royal Society and Early Career Scheme (Project number: 27301921), RGC, Hong Kong. We thank Richard Craster (Imperial College London) for fruitful discussion. We thank the anonymous referees for very detailed and constructive comments.
\bibliographystyle{abbrv}
\bibliography{refer}

\begin{thebibliography}{10}

\bibitem{blyth2006lobatto}
M.~Blyth and C.~Pozrikidis.
\newblock A {L}obatto interpolation grid over the triangle.
\newblock {\em IMA {J}ournal of {A}pplied {M}athematics}, 71(1):153--169, 2006.

\bibitem{bos1991certain}
L.~Bos.
\newblock On certain configurations of points in $\mathbb{R}^n$ which are
  unisolvent for polynomial interpolation.
\newblock {\em Journal of {A}pproximation {T}heory}, 64(3):271--280, 1991.

\bibitem{bos2010computing}
L.~Bos, S.~De~Marchi, A.~Sommariva, and M.~Vianello.
\newblock Computing multivariate {F}ekete and {L}eja points by numerical linear
  algebra.
\newblock {\em SIAM Journal on Numerical Analysis}, 48(5):1984--1999, 2010.

\bibitem{briani2012computing}
M.~Briani, A.~Sommariva, and M.~Vianello.
\newblock Computing {F}ekete and {L}ebesgue points: simplex, square, disk.
\newblock {\em Journal of Computational and Applied Mathematics},
  236(9):2477--2486, 2012.

\bibitem{canuto2007spectral}
C.~Canuto, M.~Hussaini, A.~Quarteroni, and T.~Zang.
\newblock {\em Spectral methods: fundamentals in single domains}.
\newblock Springer Science \& Business Media, 2007.

\bibitem{chen1995approximate}
Q.~Chen and I.~Babu{\v{s}}ka.
\newblock Approximate optimal points for polynomial interpolation of real
  functions in an interval and in a triangle.
\newblock {\em Computer Methods in Applied Mechanics and Engineering},
  128(3-4):405--417, 1995.

\bibitem{craster2012dangers}
R.~Craster, T.~Antonakakis, M.~Makwana, and S.~Guenneau.
\newblock Dangers of using the edges of the {B}rillouin zone.
\newblock {\em Physical Review B}, 86(11):115130, 2012.

\bibitem{fejer1932lagrangesche}
L.~Fej{\'e}r.
\newblock Lagrangesche interpolation und die zugeh{\"o}rigen konjugierten
  punkte.
\newblock {\em Mathematische Annalen}, 106(1):1--55, 1932.

\bibitem{giani2012adaptive}
S.~Giani and I.~Graham.
\newblock Adaptive finite element methods for computing band gaps in photonic
  crystals.
\newblock {\em Numerische Mathematik}, 121(1):31--64, 2012.

\bibitem{glazman1965direct}
I.~Glazman.
\newblock {\em Direct methods of qualitative spectral analysis of singular
  differential operators}, volume 2146.
\newblock Israel Program for Scientific Translations, 1965.

\bibitem{harrison2007occurrence}
J.~Harrison, P.~Kuchment, A.~Sobolev, and B.~Winn.
\newblock On occurrence of spectral edges for periodic operators inside the
  {B}rillouin zone.
\newblock {\em Journal of Physics A: Mathematical and Theoretical},
  40(27):7597, 2007.

\bibitem{heckbert1999projective}
P.~Heckbert.
\newblock Projective mappings for image warping.
\newblock {\em Image-Based Modeling and Rendering}, 869, 1999.

\bibitem{heinrichs2005improved}
W.~Heinrichs.
\newblock Improved {L}ebesgue constants on the triangle.
\newblock {\em Journal of Computational Physics}, 207(2):625--638, 2005.

\bibitem{hinuma2017band}
Y.~Hinuma, G.~Pizzi, Y.~Kumagai, F.~Oba, and I.~Tanaka.
\newblock Band structure diagram paths based on crystallography.
\newblock {\em Computational Materials Science}, 128:140--184, 2017.

\bibitem{hussein2009reduced}
M.~Hussein.
\newblock Reduced {B}loch mode expansion for periodic media band structure
  calculations.
\newblock {\em Proceedings of the Royal Society A: Mathematical, Physical and
  Engineering Sciences}, 465(2109):2825--2848, 2009.

\bibitem{ibrahimoglu2016lebesgue}
B.~Ibrahimoglu.
\newblock {L}ebesgue functions and {L}ebesgue constants in polynomial
  interpolation.
\newblock {\em Journal of Inequalities and Applications}, 2016(1):1--15, 2016.

\bibitem{jackson1999classical}
J.~Jackson.
\newblock {\em Classical electrodynamics}.
\newblock American Association of Physics Teachers, 1999.

\bibitem{joannopoulos2008molding}
J.~Joannopoulos, S.~Johnson, J.~Winn, and R.~Meade.
\newblock Molding the flow of light.
\newblock {\em Princeton Univ. Press, Princeton, NJ [ua]}, 2008.

\bibitem{jorkowski2017higher}
P.~Jorkowski and R.~Schuhmann.
\newblock Higher-order sensitivity analysis of periodic 3-{D} eigenvalue
  problems for electromagnetic field calculations.
\newblock {\em Advances in Radio Science}, 15:215--221, 2017.

\bibitem{jorkowski2018mode}
P.~Jorkowski and R.~Schuhmann.
\newblock Mode tracking for parametrized eigenvalue problems in computational
  electromagnetics.
\newblock In {\em 2018 International Applied Computational Electromagnetics
  Society Symposium (ACES)}, pages 1--2. IEEE, 2018.

\bibitem{kato2013perturbation}
T.~Kato.
\newblock {\em Perturbation theory for linear operators}, volume 132.
\newblock Springer Science \& Business Media, 2013.

\bibitem{kaup2011holomorphic}
L.~Kaup and B.~Kaup.
\newblock {\em Holomorphic functions of several variables: an introduction to
  the fundamental theory}, volume~3.
\newblock Walter de Gruyter, 2011.

\bibitem{kittel2018introduction}
C.~Kittel and P.~McEuen.
\newblock {\em Introduction to solid state physics}.
\newblock John Wiley \& Sons, 2018.

\bibitem{klindworth2014efficient}
D.~Klindworth and K.~Schmidt.
\newblock An efficient calculation of photonic crystal band structures using
  {T}aylor expansions.
\newblock {\em Communications in Computational Physics}, 16(5):1355--1388,
  2014.

\bibitem{kuchment1993floquet}
P.~Kuchment.
\newblock {\em Floquet theory for partial differential equations}, volume~60.
\newblock Springer Science \& Business Media, 1993.

\bibitem{labilloy1997demonstration}
D.~Labilloy, H.~Benisty, C.~Weisbuch, T.~Krauss, V.~Bardinal, and U.~Oesterle.
\newblock Demonstration of cavity mode between two-dimensional photonic-crystal
  mirrors.
\newblock {\em Electronics Letters}, 33(23):1978--1980, 1997.

\bibitem{maurin2018probability}
F.~Maurin, C.~Claeys, E.~Deckers, and W.~Desmet.
\newblock Probability that a band-gap extremum is located on the irreducible
  {B}rillouin-zone contour for the 17 different plane crystallographic
  lattices.
\newblock {\em International Journal of Solids and Structures}, 135:26--36,
  2018.

\bibitem{persson2004simple}
P.-O. Persson and G.~Strang.
\newblock A simple mesh generator in matlab.
\newblock {\em SIAM review}, 46(2):329--345, 2004.

\bibitem{russell2003photonic}
P.~Russell.
\newblock Photonic crystal fibers.
\newblock {\em {S}cience}, 299(5605):358--362, 2003.

\bibitem{salzer1972lagrangian}
H.~Salzer.
\newblock Lagrangian interpolation at the {C}hebyshev points
  $x_{n,v}\equiv\cos(v\pi/n)$, $v=0(1)n$; some unnoted advantages.
\newblock {\em The Computer Journal}, 15(2):156--159, 1972.

\bibitem{sauter2010boundary}
S.~Sauter and C.~Schwab.
\newblock Boundary element methods.
\newblock In {\em Boundary Element Methods}, pages 183--287. Springer, 2010.

\bibitem{scheiber2011model}
C.~Scheiber, A.~Schultschik, O.~Biro, and R.~Dyczij-Edlinger.
\newblock A model order reduction method for efficient band structure
  calculations of photonic crystals.
\newblock {\em IEEE transactions on magnetics}, 47(5):1534--1537, 2011.

\bibitem{schmidt2009computation}
K.~Schmidt and P.~Kauf.
\newblock Computation of the band structure of two-dimensional photonic
  crystals with hp finite elements.
\newblock {\em Computer Methods in Applied Mechanics and Engineering},
  198(13-14):1249--1259, 2009.

\bibitem{taylor2000algorithm}
M.~Taylor, B.~Wingate, and R.~Vincent.
\newblock An algorithm for computing {F}ekete points in the triangle.
\newblock {\em SIAM Journal on Numerical Analysis}, 38(5):1707--1720, 2000.

\bibitem{timan2014theory}
A.~Timan.
\newblock {\em Theory of approximation of functions of a real variable}.
\newblock Elsevier, 2014.

\bibitem{trefethen2019approximation}
L.~Trefethen.
\newblock {\em Approximation Theory and Approximation Practice, Extended
  Edition}.
\newblock SIAM, 2019.

\bibitem{wang2023analytical}
S.~Wang, S.~Li, and Y.-S. Wu.
\newblock An analytical solution of pressure and displacement induced by
  recovery of poroelastic reservoirs and its applications.
\newblock {\em SPE Journal}, 28(03):1329--1348, 2023.

\bibitem{wilcox1978theory}
C.~Wilcox.
\newblock Theory of {B}loch waves.
\newblock {\em Journal d’Analyse Math{\'e}matique}, 33(1):146--167, 1978.

\bibitem{yanik2003all}
M.~Yanik, S.~Fan, M.~Solja{\v{c}}i{\'c}, and J.~Joannopoulos.
\newblock All-optical transistor action with bistable switching in a photonic
  crystal cross-waveguide geometry.
\newblock {\em Optics letters}, 28(24):2506--2508, 2003.

\end{thebibliography}
\end{document}